\numberwithin{equation}{section}
\newtheorem{theorem}{Theorem}[section]
\newtheorem{thmA}{Theorem}
\newtheorem{lemma}[theorem]{Lemma}
\newtheorem{proposition}[theorem]{Proposition}
\newtheorem{remark}{Remark}
\newcommand{\mathd}{\mathrm{d}}
\newcommand{\mathe}{\mathrm{e}}
\newcommand{\dt}{\ensuremath{\frac{\partial}{\partial t}}}
\newcommand{\normho}{\ensuremath{| \mathring{h} |}}
\newcommand{\tmscript}[1]{\text{\scriptsize{$#1$}}}
\newcommand{\tmdummy}{$\mbox{}$}
\newcommand{\tmop}[1]{\ensuremath{\operatorname{#1}}}
\newenvironment{enumerateroman}{\begin{enumerate}[\textup{(}i\textup{)}] }{\end{enumerate}}
\begin{document}

\title[Mean Curvature Flow of Arbitrary codimension in $\mathbb{C}\mathbb{P}^m$]{Mean Curvature Flow of Arbitrary codimension in Complex Projective
Spaces}

\author{Li Lei and Hongwei Xu}
\address{Center of Mathematical Sciences \\ Zhejiang University \\ Hangzhou 310027 \\ China}
\email{lei-li@zju.edu.cn; xuhw@cms.zju.edu.cn}

\keywords{Mean curvature flow, submanifolds of arbitrary codimension, complex projective
	space, convergence theorem,
	differentiable sphere theorem.}
\subjclass[2010]{53C44; 53C40; 53C20; 58J35}
\thanks{Research supported by the National Natural Science Foundation of China, Grant Nos. 11531012, 11371315.}

\begin{abstract}
  In this paper, we investigate the mean curvature flow of submanifolds of
  arbitrary codimension in $\mathbb{C}\mathbb{P}^m$. We prove that if the
  initial submanifold satisfies a pinching condition, then the mean curvature
  flow converges to a round point in finite time, or converges to a totally
  geodesic submanifold as $t \rightarrow \infty$. Consequently, we obtain a
  new differentiable sphere theorem for submanifolds in
  $\mathbb{C}\mathbb{P}^m$. Our work improves the convergence theorem for
  mean curvature flow due to Pipoli and
  Sinestrari {\cite{PiSi2015}}.
\end{abstract}

{\maketitle}

\

\section{Introduction}

Let $F_0 : M^n \rightarrow N^{n + q}$ be an $n$-dimensional submanifold in
the $(n + q)$-dimensional Riemannian manifold $N$. The mean curvature flow
with initial value $F_0$ is a smooth family of immersions $F : M \times [0, T)
\rightarrow N^{n + q}$ satisfying
\begin{equation}
\left\{ \begin{array}{l}
\dt F (x, t) = H (x, t),\\
F (\cdot, 0) = F_0,
\end{array} \right.
\end{equation}
where $H (x, t)$ is the mean curvature vector of the submanifold $M_t = F_t
(M)$, $F_t = F (\cdot, t)$.

In 1984, Huisken {\cite{MR772132}} proved that uniformly convex hypersurfaces
in Euclidean space will converge to a round point along the mean curvature
flow. Afterwards, Huisken obtained convergence results for mean curvature flow
of convex hypersurfaces in Riemannian manifolds {\cite{MR837523}} and pinched
hypersurfaces in spheres {\cite{MR892052}}. In \cite{andrews2002mean}, Andrews
constructed a fully nonlinear parabolic flow of surfaces in the three-sphere, 
and proved an optimal convergence result for this flow.

For higher codimensional submanifolds, Andrews and Baker
\cite{MR2739807} proved an optimal convergence theorem for the mean
curvature flow and an optimal differentiable sphere theorem for
submanifolds in $\mathbb{R}^{n+q}$. Meanwhile, by using the Ricci flow
\cite{BW,Brendle1, MR2449060, Hamilton}, Xu and Gu \cite{XuGu2010}
proved a general differentiable sphere theorem for submanifolds in
space forms independently. Afterwards, Baker \cite{baker2011mean}
and Liu-Xu-Ye-Zhao \cite{MR3078951} proved a sharp convergence
theorem for the mean curvature flow of submanifolds in the space
form $\mathbb{F}^{n+q}(c)$ with $c\neq0$. Later, Liu, Xu and Zhao
{\cite{liu2012mean}} obtained a convergence result for mean curvature
flow of arbitrary codimension in Riemannian manifolds. 
Recently, inspired by the rigidity theory of submanifolds 
\cite{MR1458750, Xu1, MR1241055, xuhwrigidity,Yau}, 
and by developing new techniques, Lei and Xu \cite{lei2015optimal,lei2014sharp,lei2015sharpinS}
verified an optimal convergence theorem for the mean curvature flow of submanifolds in hyperbolic spaces and a new convergence theorem for the mean curvature flow of submanifolds in spheres, which improve the convergence theorems due to  Baker \cite{baker2011mean}, Huisken
\cite{MR892052} and Liu-Xu-Ye-Zhao \cite{MR3078951}. 
For more results on rigidity, sphere and
convergence theorems, we refer the readers to \cite{MR0343217,
ChKa2001, MR0273546,MR3005061,MR1161925, LiWang2014, liu2013mean,
MR0317246, MR0353216, Shiohama, MR1633163, MR0233295, MR2483374,
XG2, MR2550209, Yau, Zhu}.

More recently, Pipoli and Sinestrari {\cite{PiSi2015}} obtain the
following convergence theorem for mean curvature flow of small codimension in
the complex projective space.

\begin{thmA}
	\label{theoPS}Let $F_0 : M^n \rightarrow \mathbb{C}\mathbb{P}^{\frac{n +
	q}{2}}$ be a closed submanifold of dimension $n$ and codimension $q$ in the
	complex projective space with Fubini-Study metric. Suppose either $n
	\geqslant 5$ and $q = 1$, or and $2 \leqslant q < \frac{n - 3}{4}$. Let $F :
	M^n \times [0, T) \rightarrow \mathbb{C}\mathbb{P}^{\frac{n + q}{2}}$ be
	the mean curvature flow with initial value $F_0$. If $F_0$ satisfies
	\[ | h |^2 < \left\{ \begin{array}{ll}
	\frac{1}{n - 1} | H |^2 + 2, & q = 1,\\
	\frac{1}{n - 1} | H |^2 + \frac{n - 3 - 4 q}{n}, & q \geqslant 2,
	\end{array} \right. \]
	then $F_t$ converges to a round point in finite time, or converges to a
	totally geodesic submanifold as $t \rightarrow \infty$. In particular, $M$
	is diffeomorphic to either $\mathbb{S}^n$ or $\mathbb{C}\mathbb{P}^{n /
		2}$.
\end{thmA}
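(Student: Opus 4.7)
The plan is to adapt the Huisken--Andrews--Baker framework for mean curvature flow in space forms to the Kähler ambient geometry of $\mathbb{CP}^{m}$, following the strategies of \cite{MR892052, MR2739807, baker2011mean, MR3078951}. The argument should have three phases: preservation of pinching, iterative improvement of pinching, and a convergence dichotomy.

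Phase one. Set $Q=|h|^{2}-\tfrac{1}{n-1}|H|^{2}-c$, with $c=2$ when $q=1$ and $c=(n-3-4q)/n$ for $q\geq 2$, and compute $(\partial_{t}-\Delta)Q$ using the Simons identity in a general Riemannian target. The reaction term decomposes into (i) quartic intrinsic terms $R_{1}-\tfrac{1}{n-1}R_{2}$, which by the algebraic inequalities of Li--Li and Andrews--Baker are dominated by $|\mathring{h}|^{2}Q$; (ii) constant terms arising because the Fubini--Study sectional curvatures lie in $[1,4]$; and (iii) new algebraic terms generated by the complex structure $J$ acting on tangent and normal frames (note $\bar{\nabla}\bar{R}=0$ in $\mathbb{CP}^{m}$, so derivative terms vanish but $J$-terms persist). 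The values of $c$ are chosen precisely so that (ii) and (iii) are absorbed, and the codimension restriction $q<(n-3)/4$ for $q\geq 2$ is exactly what ensures $c>0$. A tensor maximum principle then yields $Q<0$ along the flow.

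Phase two. Run a Stampacchia iteration on $f_{\sigma}=Q_{+}|H|^{\sigma-2}$. Using the Kato-type inequality $|\nabla h|^{2}\geq \tfrac{3}{n+2}|\nabla H|^{2}$, the Michael--Simon Sobolev inequality, and the evolution equations for $|h|^{2}$ and $|H|^{2}$, derive an $L^{p}$ decay estimate for $f_{\sigma}$ and conclude $\sup f_{\sigma}\to 0$ as $t$ approaches the maximal existence time. The strict positivity of $c$ is essential, providing the zeroth-order coercive term that drives the iteration; the $\mathbb{CP}^{m}$-curvature contributions are lower order wherever $|H|$ is large and do not obstruct the argument.

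Phase three. Conclude via the standard dichotomy. If $\max_{M_t}|H|$ blows up in finite time $T$, the improved pinching forces the appropriately rescaled flow to subconverge to a round sphere in $\mathbb{R}^{n+q}$, giving convergence to a round point and $M\cong\mathbb{S}^{n}$. If $\max_{M_t}|H|$ stays bounded, Shi-type higher derivative estimates give long-time existence, the improved pinching forces $|\mathring{h}|\to 0$, and an Arzelà--Ascoli argument produces a smooth limit which must be totally geodesic; the topological alternatives $\mathbb{S}^{n}$ or $\mathbb{CP}^{n/2}$ follow from the classification of totally geodesic submanifolds of $\mathbb{CP}^{m}$ compatible with the pinching hypothesis. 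The main obstacle I anticipate is phase one: managing the $J$-terms in the Simons computation so that the exact thresholds $c=2$ and $c=(n-3-4q)/n$ emerge from sharp algebraic estimates rather than being imposed by hand; this is presumably also where the suboptimality improved upon in the present paper resides.
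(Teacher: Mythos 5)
Your three-phase outline is the same scheme used here (and by Pipoli--Sinestrari, from whom Theorem \ref{theoPS} is quoted rather than reproved): preservation of pinching by the maximum principle after controlling the reaction terms and the extra $J$-terms, an integral/Stampacchia improvement of the pinching, then a gradient estimate, a Myers-type diameter bound, and the finite-time/infinite-time dichotomy. Within this paper, Theorem \ref{theoPS} is obtained as a special case of the stronger Theorem \ref{theoh}, whose proof follows exactly that scheme, so at the level of strategy your proposal is on target.

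There is, however, a concrete gap in your phases two and three. Once phase one gives $Q<0$, the quantity $f_{\sigma}=Q_{+}\,|H|^{\sigma-2}$ is identically zero, so the iteration as written yields nothing; and if you replace $Q_{+}$ by the intended $|\mathring{h}|^{2}$, the weight $|H|^{\sigma-2}$ degenerates wherever $H=0$, which the hypotheses do not exclude in this setting (the conclusion even allows convergence to a totally geodesic, hence minimal, limit, so small or vanishing $|H|$ must be handled; this is unlike the convex Euclidean case where $H>0$ is preserved). The paper instead normalizes by a function bounded below by a positive constant (here $W=\mathring{\varphi}_{\varepsilon}$ for $q=1$, resp.\ $\frac{1}{n(n-1)}|H|^{2}+2-\frac{3}{n}$ for $2\leqslant q<n-4$) and proves the exponential-in-time estimate $|\mathring{h}|^{2}\leqslant C_{0}(|H|^{2}+1)^{1-\sigma}\mathrm{e}^{-\varepsilon t/2}$ (Theorem \ref{sa0h2}). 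That quantitative decay, combined with the gradient estimate of Theorem \ref{dH2} and the Myers argument of Lemma \ref{ric}, is what settles the $T=\infty$ branch: one first shows $|H|^{2}\leqslant C\mathrm{e}^{-\varepsilon t/8}$ by a contradiction argument and only then concludes that the limit is totally geodesic, identified as $\mathbb{C}\mathbb{P}^{n/2}$ via Chen--Ogiue. Your step ``$|H|$ bounded plus pinching forces $|\mathring{h}|\to 0$'' does not follow from smallness of an $|H|$-weighted ratio in the region where $|H|$ is small, so the long-time branch needs the unweighted, time-decaying form of the improved pinching (or an equivalent substitute), together with the gradient/Myers input your sketch only gestures at.
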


In this paper, we investigate the mean
curvature flow of arbitrary codimensional submanifolds in the complex
projective space, and prove the following theorem.

\begin{theorem}
	\label{theoh}Let $F_0 : M^n \rightarrow \mathbb{C}\mathbb{P}^{\frac{n +
	q}{2}}$ be an n-dimensional closed submanifold in
	$\mathbb{C}\mathbb{P}^{\frac{n + q}{2}}$. Suppose that the dimension and
	codimension satisfy either (i) $q = 1$ and $n \geqslant 3$, (ii) $2
	\leqslant q < n - 4$, or (iii) $q \geqslant n - 4 \geqslant 2$. Let $F : M^n
	\times [0, T) \rightarrow \mathbb{C}\mathbb{P}^{\frac{n + q}{2}}$ be the
	mean curvature flow with initial value $F_0$. If $F_0$ satisfies
	\[ | h |^2 < \left\{ \begin{array}{ll}
	\varphi (| H |^2), & q = 1,\\
	\frac{1}{n - 1} | H |^2 + 2 - \frac{3}{n}, & 2 \leqslant q < n - 4,\\
	\psi (| H |^2), & q \geqslant n - 4,
	\end{array} \right. \]
	then $F_t$ converges to a round point in finite time, or converges to a
	totally geodesic submanifold as $t \rightarrow \infty$. In particular, $M$
	is diffeomorphic to $\mathbb{S}^n$ or $\mathbb{C}\mathbb{P}^{n /
		2}$.
	
	Here the functions $\varphi$ and $\psi$ are defined as (\ref{func1}) and
	(\ref{func2}), and $\varphi (| H |^2)$ and $\psi (| H |^2)$ are given by
	\[ \varphi (| H |^2) = 2 + a_n + \left( b_n + \tfrac{1}{n - 1} \right)
	| H |^2 - \sqrt{b_n^2 | H |^4 + 2 a_n b_n | H |^2}, \]
	\[ \psi (| H |^2) = \frac{9}{n^2 - 3 n - 3} + \frac{n^2 - 3 n}{n^3 - 4
		n^2 + 3} | H |^2 - \tfrac{3 \sqrt{| H |^4 + \frac{2}{n} (n - 1) (n^2 - 3) |
	H |^2 + 9 (n - 1)^2}}{n^3 - 4 n^2 + 3}, \]
	where $a_n = 2 \sqrt{(n^2 - 4 n + 3) b_n}, b_n = \min \left\{ \frac{n - 3}{4
		n - 4}, \frac{2 n - 5}{n^2 + n - 2} \right\}$.
\end{theorem}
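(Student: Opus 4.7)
The plan is to follow the higher--codimension mean curvature flow framework of Andrews--Baker \cite{MR2739807}, Baker \cite{baker2011mean} and Liu--Xu--Ye--Zhao \cite{MR3078951}, as adapted to $\mathbb{CP}^m$ by Pipoli--Sinestrari \cite{PiSi2015}, and to push it through with sharper algebraic estimates so that the wider codimension ranges and smaller pinching constants of Theorem~\ref{theoh} can be accommodated. First I would record the evolution equations for $|h|^2$, $|H|^2$, $|\mathring h|^2$ and $|\nabla h|^2$ along the flow in $\mathbb{CP}^{(n+q)/2}$. Because the Fubini--Study metric is not of constant sectional curvature, the ambient curvature tensor, expressible through the metric $g$ and the almost complex structure $J$ via $\bar R(X,Y)Z = g(Y,Z)X - g(X,Z)Y + g(JY,Z)JX - g(JX,Z)JY - 2g(JX,Y)JZ$, contributes extra zero--order terms coupling $h$ with the tangential, normal and mixed components of $J$. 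These corrections must be organised so that the pointwise maximum principle still applies, using the holomorphic sectional curvature bounds $1 \leq K \leq 4$ to control the signed size of the resulting lower--order pieces.

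Next, for each of the three regimes I would prove preservation of the pinching $|h|^2 < f(|H|^2)$ along the flow by applying Hamilton's maximum principle to $U = |h|^2 - f(|H|^2)$, with $f = \varphi$, $\tfrac{1}{n-1}|H|^2 + 2 - \tfrac{3}{n}$, or $\psi$ respectively. The heart of the argument is to show that the reaction term in $\partial_t U - \Delta U - 2\langle \nabla \log f, \nabla U\rangle$ is nonpositive at a zero of $U$. This requires the refined algebraic inequalities for $\sum_{\alpha,\beta}|[h^\alpha, h^\beta]|^2$ and $\sum_{\alpha,\beta}(\operatorname{tr}(h^\alpha h^\beta))^2$ in terms of $|\mathring h|^4$, $|\mathring h|^2|H|^2$ and $|H|^4$ that underlie the rigidity theory of submanifolds (cf.\ \cite{Xu1,xuhwrigidity,MR1458750,lei2015optimal}). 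The three cases in the theorem are split precisely by which of these algebraic bounds is sharper, and the thresholds $\varphi$, $\psi$, $2-3/n$ are calibrated so that, when the $\mathbb{CP}^m$--correction terms are added in, the reaction closes up.

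Once the pinching is preserved, I would run the Huisken--style contraction scheme: establish a gradient estimate of the form $|\nabla h|^2 \leq c_1 |\mathring h|^2 |H|^2 + \text{l.o.t.}$, and then a Stampacchia iteration giving $\int_{M_t}|\mathring h|^{2+\delta}|H|^{-2-\delta}\,d\mu_t \to 0$ for some small $\delta > 0$, as in \cite{MR892052,baker2011mean}. This yields the usual dichotomy: if $\max_{M_t}|H|\to\infty$ in finite time, Type I rescaling produces a scale--invariant pinched self--shrinker, forced to be a round $n$--sphere, hence the flow contracts to a round point; otherwise $\max|H|$ stays bounded, $|\mathring h|/|H|\to 0$ and $|H|\to 0$, and smooth subconvergence identifies the limit as a totally geodesic submanifold, which in $\mathbb{CP}^m$ is either the real form $\mathbb{S}^n$ or a complex form $\mathbb{CP}^{n/2}$. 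The hardest step is clearly the preservation of the pinching: because the improvement over \cite{PiSi2015} is both in the allowed codimensions and in the pinching constants, every estimate on the ambient curvature contribution must be essentially optimal. In particular one cannot afford to apply Cauchy--Schwarz crudely to the normal component of $JH$ (as in \cite{PiSi2015}); the argument will need to decompose $J$ along tangent, normal and mixed parts and absorb each piece separately into the refined algebraic inequalities cited above.
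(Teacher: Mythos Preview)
Your outline follows the same architecture as the paper---evolution equations, preservation of pinching, a Stampacchia--type decay estimate for $|\mathring h|^2/W^{1-\sigma}$, a gradient estimate for $|\nabla H|$, and a finite/infinite time dichotomy---so at the structural level you are on target. Two substantive points, however, separate your sketch from a working proof.

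\textbf{The missing ingredient in pinching preservation.} You correctly say that the reaction term in $(\partial_t-\Delta)U$ must be shown nonpositive at a zero of $U$, and that the $J$--correction terms must be handled sharply. What you do not identify is \emph{how} the paper closes the estimate: it is not only through the zero--order algebraic inequalities on $R_1,R_2,S_1,S_2$, but through a lower bound on the gradient term. Because the Codazzi equation in $\mathbb{CP}^m$ has a nonzero right--hand side involving $J$, the symmetrisation argument for $\nabla h$ yields
\[
|\nabla h|^2 \;\geqslant\; \tfrac{3}{n+2}|\nabla H|^2 + 2(n-1) \quad (q=1),\qquad
|\nabla h|^2 \;\geqslant\; \tfrac{3}{n+8}|\nabla H|^2 + 2(n-q)|P|^2 \quad (2\leqslant q<n),
\]
with $P\xi=(J\xi)^T$. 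The strictly positive constant on the right is precisely what absorbs the extra $J$--contributions $S_1$ in the evolution of $|\mathring h|^2$ and allows the pinching thresholds $\varphi$, $2-\tfrac{3}{n}$, $\psi$ to be pushed beyond those of \cite{PiSi2015}. Without isolating this term you will not see why the improved constants hold; the refined inequalities for $R_1$ alone (Li--Li, Santos, etc.) do not suffice. For $q\geqslant n-4$ the paper additionally introduces auxiliary tensors $D,E$ built from $\mathring h$ and $J$ to get the sharp bound $S_1\leqslant \tfrac{3}{n}S_2+(2n+3)|\mathring h|^2$; this is the step replacing your ``decompose $J$ and absorb each piece separately''.

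\textbf{The finite--time convergence argument.} Your proposal to rescale at a Type~I singularity and classify the limiting self--shrinker is not what the paper does, and would be problematic here: there is no Huisken monotonicity formula available in $\mathbb{CP}^m$, so extracting a self--shrinker limit is not straightforward. The paper instead follows Huisken's original route: the decay estimate $|\mathring h|^2\leqslant C_0(|H|^2+1)^{1-\sigma}e^{-\varepsilon t/2}$, then a gradient estimate of the form $|\nabla H|^2<(\eta|H|)^4+\Psi(\eta)^2$, then a Ricci lower bound plus Myers' theorem to force $\operatorname{diam}M_t\to 0$ and $\min|H|/\max|H|\to 1$, and finally a normalised--flow argument. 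In the infinite--time case the paper shows $|H|^2\leqslant Ce^{-\varepsilon t/8}$ by a contradiction argument (bounded $|H|$ alone does not give $|H|\to 0$), and the totally geodesic limit is $\mathbb{CP}^{n/2}$ (the real totally geodesic form is $\mathbb{RP}^n$, not $\mathbb{S}^n$, and is excluded by the codimension constraint).
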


\begin{remark}
	By a computation, if $n > 3$, we have $\varphi (x) > \frac{x}{n - 1} + 2$
	for $x \geqslant 0$. Furthermore, if $n \geqslant 3$, we have $\varphi (x)
	> \sqrt{2 (n - 3)}$ for $x \geqslant 0$. Therefore, Theorem
	\ref{theoh} substantially improves Theorem \ref{theoPS}.

	The function $\psi$ satisfies $\psi (x) \geqslant \frac{x}{n}$ for $x
	\geqslant 0$, and the the equality holds if and only if $x = 0$. In
	addition, we have $\lim_{x \rightarrow + \infty} \psi (x) / x = \frac{1}{n -
		1}$.
\end{remark}

Since $\psi (0) = 0$, the pinching condition $| h |^2 < \psi (| H |^2)$
implies that the submanifold has nonzero mean curvature. Thus, if $q
\geqslant n - 4$, it doesn't occur that the mean curvature flow in Theorem
\ref{theoh} converges to a totally geodesic submanifold. When $q \geqslant n -
4 \geqslant 2$, we obtain a refined result under the weakly pinching
condition.

\begin{theorem}
	\label{theo3}Let $F_0 : M^n \rightarrow \mathbb{C}\mathbb{P}^{\frac{n +
	q}{2}}$ be a closed submanifold of dimension $n$ and codimension $q$ in
	$\mathbb{C}\mathbb{P}^{\frac{n + q}{2}}$, where $q \geqslant n - 4
	\geqslant 2$. Let $F : M^n \times [0, T) \rightarrow
	\mathbb{C}\mathbb{P}^{\frac{n + q}{2}}$ be the mean curvature flow with
	initial value $F_0$. If $F_0$ satisfies
	\[ | h |^2 \leqslant \psi (| H |^2), \]
	then one of the following holds:
	\begin{enumerateroman}
        \item $F_t$ is congruent to the totally geodesic
		$\mathbb{R}\mathbb{P}^n$ or $\mathbb{C}\mathbb{P}^{n / 2}$ for each $t
		\in [0, + \infty)$;
		
		\item $F_t$ converges to a round point in finite time. In particular, $M$
		is diffeomorphic to $\mathbb{S}^n$.
	\end{enumerateroman}
\end{theorem}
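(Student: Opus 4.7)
The theorem allows equality $|h|^2 = \psi(|H|^2)$ in the pinching condition, so the task is to extend Theorem \ref{theoh} by treating this boundary case via a strong maximum principle argument. First I would dispose of the trivial subcase: if $F_0$ is itself totally geodesic, then $H \equiv 0$, the flow is stationary, and $F_t = F_0$ realises alternative (i); the classification of $n$-dimensional totally geodesic submanifolds of $\mathbb{CP}^{(n+q)/2}$ as $\mathbb{RP}^n$ or $\mathbb{CP}^{n/2}$ closes that case. From now on I assume $F_0$ is not totally geodesic, and the goal is to deduce alternative (ii).

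Set $W(x,t) := \psi(|H|^2) - |h|^2$, so that $W(\cdot,0)\geq 0$ by hypothesis. Using the evolution equations of $|h|^2$ and $|H|^2$ under mean curvature flow in $\mathbb{CP}^{(n+q)/2}$, together with the algebraic estimates underlying Theorem \ref{theoh}, I would derive a differential inequality of the form
\[ \Bigl(\dt - \Delta\Bigr) W \geq -C(x,t)\, W \]
on each slab $M \times [0, T_1]$ with $T_1 < T$, where $C$ is a bounded function of the geometric quantities. The delicate point at this step is that every algebraic inequality used to prove strict preservation of the pinching in Theorem \ref{theoh} must be shown to force its reaction term to vanish on the zero set of $W$, so that the inequality above is tight there; this requires revisiting the Cauchy--Schwarz and rearrangement estimates from that proof and verifying their equality cases are compatible with $W=0$.

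With the differential inequality in hand, the parabolic strong maximum principle on the closed manifold $M$ yields the dichotomy: either $W(\cdot, t_1) > 0$ everywhere for some $t_1 \in (0,T)$, or $W \equiv 0$ on $M \times [0,T)$. In the first case I restart the flow at $t_1$ under the strict pinching $|h|^2 < \psi(|H|^2)$, and Theorem \ref{theoh} directly gives convergence to a round point in finite time, which is alternative (ii).

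The main obstacle is to rule out the second alternative under the standing assumption that $F_0$ is not totally geodesic. If $W \equiv 0$, then $|h|^2 \equiv \psi(|H|^2)$ identically on $M$; since $\psi(0) = 0$ and $\psi(x) > x/n$ strictly for $x > 0$, this is a highly rigid pointwise condition on $h$. Using the Codazzi equation, the Gauss equation for $\mathbb{CP}^{(n+q)/2}$, and tracing the equality cases of the algebraic inequalities invoked in the preservation argument of Theorem \ref{theoh}, I expect to show that the identity $|h|^2 \equiv \psi(|H|^2)$ forces $H \equiv 0$, and consequently $|h|\equiv 0$, contradicting the assumption that $F_0$ is not totally geodesic. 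Carrying out this rigidity step --- essentially a self-contained classification of submanifolds realising equality in the pinching --- is the most intricate part of the argument, and is where I expect the main technical effort to lie.
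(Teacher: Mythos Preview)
Your overall strategy---strong maximum principle dichotomy, then either restart the flow under strict pinching or analyse the identically-zero case---is exactly what the paper does. But you misjudge where the work lies, and the differential inequality you propose, $(\partial_t-\Delta)W\ge -CW$, is too coarse to carry the rigidity step.

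The paper works with $U=|\mathring h|^2-\mathring\psi(|H|^2)=-W$ and, taking $\varepsilon=0$ in the inequality~(\ref{paraUhico}) already derived for Theorem~\ref{prehico} and discarding the $\rho_2$-term via Lemma~\ref{psio}(i), obtains
\[
\tfrac12(\partial_t-\Delta)U\;\le\;U\cdot(\text{bounded})\;+\;\Bigl[\mathring\psi\bigl(\mathring\psi+\tfrac1n|H|^2+n+3\bigr)-\mathring\psi'\,|H|^2\bigl(\mathring\psi+\tfrac1n|H|^2+n\bigr)\Bigr].
\]
By Lemma~\ref{psio}(iii) the bracket is $\le 0$, with equality \emph{if and only if} $|H|^2=0$. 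This is strictly stronger than your linear inequality: the extra sign-definite reaction term is what drives the rigidity. If $U\equiv 0$ on a time interval, then $0=\tfrac12(\partial_t-\Delta)U\le[\text{bracket}]\le 0$, so the bracket vanishes identically, hence $|H|\equiv 0$ and $|h|^2=\psi(0)=0$. The submanifold is totally geodesic, and the classification of totally geodesic submanifolds of $\mathbb{CP}^m$ finishes.

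So the step you call ``the most intricate part''---a classification via Codazzi and Gauss of submanifolds realising equality in the pinching---is unnecessary. The function $\psi$ was constructed precisely so that the residual reaction term in the evolution inequality saturates Lemma~\ref{psio}(iii); the rigidity is already built into that equality case. Your plan would eventually succeed, but you should retain the full structure of the evolution inequality (linear part plus sign-definite remainder with characterised equality) rather than collapsing it to a linear bound.
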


\section{Notations and formulas}

Let $\mathbb{C}\mathbb{P}^m$ be the $m$-dimensional complex projective
space with the Fubini-Study metric $g_{\tmop{FS}}$. Let $J$ be its complex
structure. We denote by $\bar{\nabla}$ the Levi-Civita connection of
$(\mathbb{C}\mathbb{P}^m, g_{\tmop{FS}})$. Since the Fubini-Study metric is
a K{\"a}hler metric, we have $\bar{\nabla} J = 0$. The curvature tensor
$\bar{R}$ of $\mathbb{C}\mathbb{P}^m$ can be written as
\begin{eqnarray}
\bar{R} (X, Y, Z, W) & = & \langle X, Z \rangle \langle Y, W \rangle -
\langle X, W \rangle \langle Y, Z \rangle \nonumber\\
&  & + \langle X, J Z \rangle \langle Y, J W \rangle - \langle X, J W
\rangle \langle Y, J Z \rangle  \label{CPcurv}\\
&  & + 2 \langle X, J Y \rangle \langle Z, J W \rangle . \nonumber
\end{eqnarray}

Let $(M^n, g)$ be a real $n$-dimensional Riemannian submanifold immersed in
$(\mathbb{C}\mathbb{P}^m, g_{\tmop{FS}})$. Let $q$ be its codimension, i.e.,
$n + q = 2 m$. At a point $p \in M$, let $T_p M$ and $N_p M$ be the tangent
space and normal space, respectively. For a vector in $T_p M \oplus N_p M$, we
denote by $(\cdot)^T$ and $(\cdot)^N$ its projections onto $T_p M$ and $N_p
M$, respectively. We use the same symbol $\nabla$ to represent the connections
of tangent bundle $T M$ and normal bundle $N M$. Denote by $\Gamma (E)$ the
spaces of smooth sections of a vector bundles $E$. For $X, Y \in \Gamma (T
M)$, $\xi \in \Gamma (N M)$, the connections $\nabla$ are given by $\nabla_X Y
= (\bar{\nabla}_X Y)^T$ and $\nabla_X \xi = (\bar{\nabla}_X \xi)^N$. The
second fundamental form of $M$ is defined as $h (X, Y) = (\bar{\nabla}_X
Y)^N$.

Throughout this paper, we shall make the following convention on indices:
\[ 1 \leqslant A, B, C, \cdots \leqslant n + q, \quad 1 \leqslant i, j, k,
\cdots \leqslant n, \quad n + 1 \leqslant \alpha, \beta, \gamma, \cdots
\leqslant n + q. \]
We choose a local orthonormal frame $\{ e_i \}$ for the tangent bundle and a
local orthonormal frame $\{ e_{\alpha} \}$ for the normal bundle. With the
local frame, the components of $h$ are given by $h^{\alpha}_{i j} = \langle h
(e_i, e_j), e_{\alpha} \rangle$. The mean curvature vector is defined as $H =
\sum_{\alpha} H^{\alpha} e_{\alpha}$, where $H^{\alpha} = \sum_i h^{\alpha}_{i
	i}$. Let $\mathring{h} = h - \tfrac{1}{n} H \otimes g$ be the traceless second
fundamental form. We have the relations $\normho^2 = | h |^2 - \frac{1}{n} | H
|^2$ and $\left| \nabla \mathring{h} \right|^2 = | \nabla h |^2 - \frac{1}{n}
| \nabla H |^2$.

We denote by $(J_{A B})$ the matrix of $J$ with respect to the frame $\{ e_A
\}$, i.e.,
\[ J_{A B} = \langle e_A, J e_B \rangle . \]
This matrix satisfies $J_{A B} = - J_{B A}$ and $\sum_B J_{A B} J_{B C} = -
\delta_{A C}$.

At each point  $p\in M$, we define a tensor $P : N_p M \rightarrow T_p M$ by
\[ P \xi = (J \xi)^T \qquad \tmop{for} \quad \xi \in N_p M. \]
Then we have
\[ | P |^2 = \sum_{\alpha} | P e_{\alpha} |^2 \leqslant \sum_{\alpha} |
e_{\alpha} |^2 = q, \]
and
\[ | P |^2 = \sum_{\alpha, i} (J_{i \alpha})^2 = \sum_{A, i} (J_{i A})^2 -
\sum_{i, j} (J_{i j})^2 = n - \sum_{i, j} (J_{i j})^2 . \]

We have the following estimates for the gradient of the second fundamental form.

\begin{lemma}
	\label{dA2}For an $n$-dimensional submanifold in
	$\mathbb{C}\mathbb{P}^{\frac{n + q}{2}}$, we have
	\[ | \nabla h |^2 \geqslant \left\{ \begin{array}{ll}
	\frac{3}{n + 2} | \nabla H |^2 + 2 (n - 1), & q = 1,\\
	\frac{3}{n + 8} | \nabla H |^2 + 2 (n - q) | P |^2, & 2 \leqslant q <
	n,\\
	\frac{3}{n + 8} | \nabla H |^2, & q \geqslant n.
	\end{array} \right. \]
\end{lemma}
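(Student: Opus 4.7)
My plan is to exploit the Codazzi equation in $\mathbb{CP}^m$, which upon substituting the Fubini--Study curvature tensor (\ref{CPcurv}) reads
\[
\nabla_i h^\alpha_{jk}-\nabla_j h^\alpha_{ik} \;=\; J_{ik}J_{j\alpha}-J_{i\alpha}J_{jk}+2J_{ij}J_{k\alpha} \;=:\; C^\alpha_{ijk},
\]
so the Codazzi defect is built entirely from the complex structure $J$. Set $T^\alpha_{ijk}:=\nabla_i h^\alpha_{jk}$ and decompose $T=E+F$, where
\[
E^\alpha_{ijk}:=\tfrac{1}{3}\big(T^\alpha_{ijk}+T^\alpha_{jki}+T^\alpha_{kij}\big)
\]
is the total symmetrization (it equals $\tfrac{1}{6}\sum_{\sigma\in S_3}T^\alpha_{\sigma(i)\sigma(j)\sigma(k)}$ after using symmetry of $T$ in its last two indices), so that $F=T-E$ is a linear combination of $C^\alpha_{\cdot\cdot\cdot}$ and the decomposition is orthogonal: $|\nabla h|^2=|E|^2+|F|^2$.

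I would then apply the Li--Li inequality $\sum_{ijk}(E^\alpha_{ijk})^2\geq\frac{3}{n+2}\sum_i\big(\sum_j E^\alpha_{ijj}\big)^2$ for each fixed $\alpha$, and use the Codazzi identity once more to compute the trace:
\[
\sum_j E^\alpha_{ijj} \;=\; \nabla_i H^\alpha+\tfrac{2}{3}\, D^\alpha_i,\qquad D^\alpha_i:=\sum_j C^\alpha_{jij}.
\]
Combining with $|F|^2$ and expanding produces a lower bound of the form $c_1|\nabla H|^2+c_2\langle\nabla H,D\rangle+c_3|D|^2+|F|^2$. All purely $J$-dependent pieces $|F|^2$, $|D|^2$ and $\langle\nabla H,D\rangle$ are then reduced to expressions in $n$, $q$ and $|P|^2$ using the preamble identities $\sum_B J_{AB}J_{BC}=-\delta_{AC}$, $J_{AB}=-J_{BA}$ and $|P|^2=\sum_{i,\alpha}J_{i\alpha}^2=n-\sum_{i,j}J_{ij}^2$.

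The three cases are then separated. For $q=1$ the normal bundle is one-dimensional, $|P|^2\equiv 1$, and the simplification of the $J$-contractions preserves the sharp Li--Li coefficient $\frac{3}{n+2}$, with the residual curvature contribution evaluating to exactly $2(n-1)=2(n-q)|P|^2$. For $2\leq q<n$ I would absorb the cross term $\langle\nabla H,D\rangle$ via a weighted Cauchy--Schwarz, calibrating the weight so that the remaining $D$-contribution combined with $|F|^2$ equals the nonnegative quantity $2(n-q)|P|^2$; this calibration forces the gradient coefficient down to $\frac{3}{n+8}$. For $q\geq n$ the coefficient $2(n-q)$ is nonpositive, so the curvature residual is simply discarded and only $\frac{3}{n+8}|\nabla H|^2$ survives.

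The hard part will be the algebraic bookkeeping in case (ii): one must simultaneously track the Li--Li constant, the Cauchy--Schwarz weight and the explicit $J$-contractions so that the optimal calibration produces precisely the advertised $\frac{3}{n+8}$ with the clean nonnegative residual $2(n-q)|P|^2$, and verify (using the bound $|P|^2\leq q$) that the three regimes patch together smoothly. This careful balance is what distinguishes the $\mathbb{CP}^m$ estimate from the cleaner space-form bound $\frac{3}{n+2}|\nabla H|^2$ familiar from \cite{MR2739807, baker2011mean, MR3078951}.
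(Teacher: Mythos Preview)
Your proposal is correct and follows essentially the same route as the paper: symmetrize $\nabla h$, apply Huisken's trace inequality $|E|^2\geq\tfrac{3}{n+2}\sum_{\alpha,i}(\sum_j E^\alpha_{ijj})^2$ (which you call Li--Li), compute the Codazzi defect from $J$, and for $q\geq 2$ complete the square (your ``weighted Cauchy--Schwarz'') to absorb the cross term, dropping the coefficient from $\tfrac{3}{n+2}$ to $\tfrac{3}{n+8}$. One small sharpening: the curvature residual that falls out is $2|P|^2(n-|P|^2)$, not $2(n-q)|P|^2$ directly; the latter follows only after invoking $|P|^2\leq q$ (for $q<n$) or $|P|^2\leq n$ (for $q\geq n$), and in the $q=1$ case the key simplification is that $D^\alpha_i=3\sum_k J_{\alpha k}J_{ki}$ vanishes identically, which is why the sharp $\tfrac{3}{n+2}$ survives.
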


\begin{proof}
	Let $S$ be the symmetric part of $\nabla h$, i.e., $S_{i j k}^{\alpha} =
	\frac{1}{3} (\nabla_i h_{j k}^{\alpha} + \nabla_j h_{i k}^{\alpha} +
	\nabla_k h_{i j}^{\alpha})$. Using the same argument as in the proof of Lemma 2.2 in
	{\cite{MR772132}}, we have
	\begin{equation}
	| S |^2 \geqslant \frac{3}{n + 2} \sum_{\alpha, i} \left( \sum_k
	S^{\alpha}_{i k k} \right)^2 . \label{symdelh}
	\end{equation}
	By the Codazzi equation, we have $\sum_k S^{\alpha}_{i k k} = \nabla_i
	H^{\alpha} + 2 \sum_k J_{\alpha k} J_{k i}$. Then we obtain
	\begin{equation}
	\sum_i \left( \sum_k S_{i k k} \right)^2 = | \nabla H |^2 + 4
	\sum_{\alpha, k} \nabla_i H^{\alpha} J_{\alpha k} J_{k i} + 4
	\sum_{\alpha, i} \left( \sum_k J_{\alpha k} J_{k i} \right)^2 .
	\label{trsymdelh}
	\end{equation}
	Using the Codazzi equation again, we get
	\begin{eqnarray}
	| S |^2 & = & \frac{1}{3} | \nabla h |^2 + \frac{2}{3} \sum_{\alpha, i, j,
		k} \nabla_k h^{\alpha}_{i j} \nabla_j h^{\alpha}_{i k} \nonumber\\
	& = & \frac{1}{3} | \nabla h |^2 + \frac{2}{3} \sum_{\alpha, i, j, k}
	\nabla_k h^{\alpha}_{i j} (\nabla_k h^{\alpha}_{i j} + \bar{R}_{\alpha i j
		k}) \nonumber\\
	& = & | \nabla h |^2 + \frac{2}{3} \sum_{\alpha, i, j, k} \nabla_k
	h^{\alpha}_{i j} \bar{R}_{\alpha i j k} \nonumber\\
	& = & | \nabla h |^2 + \frac{2}{3} \sum_{\alpha, i, j, k} (\nabla_i
	h^{\alpha}_{j k} + \bar{R}_{\alpha j k i}) \bar{R}_{\alpha i j k} 
	\label{normsymdelh}\\
	& = & | \nabla h |^2 + \frac{2}{3} \sum_{\alpha, i, j, k} \bar{R}_{\alpha
		j k i} \bar{R}_{\alpha i j k} \nonumber\\
	& = & | \nabla h |^2 - 2 \sum_{\alpha, i, j, k} [J_{\alpha j} J_{j i}
	J_{\alpha k} J_{k i} + (J_{\alpha i})^2 (J_{j k})^2] \nonumber\\
	& = & | \nabla h |^2 - 2 \sum_{\alpha, i} \left( \sum_k J_{\alpha k} J_{k
		i} \right)^2 - 2 | P |^2 (n - | P |^2) . \nonumber
	\end{eqnarray}
	From (\ref{symdelh}),(\ref{trsymdelh}) and (\ref{normsymdelh}), we obtain
	\begin{eqnarray}
	| \nabla h |^2 & \geqslant & \frac{1}{n + 2} \left[ 3 | \nabla H |^2 + 12
	\sum_{\alpha, i, k} \nabla_i H^{\alpha} J_{\alpha k} J_{k i} + 2 (n + 8)
	\sum_{\alpha, i} \left( \sum_k J_{\alpha k} J_{k i} \right)^2 \right]
	\nonumber\\
	&  & + 2 | P |^2 (n - | P |^2) .  \label{dh23n2}
	\end{eqnarray}
	If $q = 1$, we have $\sum_k J_{\alpha k} J_{k i} = 0$ and $| P |^2 = 1$.
	Thus (\ref{dh23n2}) becomes
	\[ | \nabla h |^2 \geqslant \frac{3}{n + 2} | \nabla H |^2 + 2 (n - 1) . \]
	If $q \geqslant 2$, from (\ref{dh23n2}) we get
	\begin{eqnarray*}
		| \nabla h |^2 & \geqslant & \frac{2}{n + 2} \sum_{\alpha, i} \left( 3
		\sqrt{\frac{1}{n + 8}} \nabla_i H^{\alpha} + \sqrt{n + 8} \sum_k J_{\alpha
	k} J_{k i} \right)^2\\
		&  & + \frac{3}{n + 8} | \nabla H |^2 + 2 | P |^2 (n - | P |^2) .
	\end{eqnarray*}
	
\end{proof}

Let $F : M^n \times [0, T) \rightarrow \mathbb{C}\mathbb{P}^m$ be a mean
curvature flow in a complex projective space. For a fixed $t$, letting $F_t =
F (\cdot, t)$, then $F_t : M^n \rightarrow \mathbb{C}\mathbb{P}^m$ is a
Riemannian submanifold in $\mathbb{C}\mathbb{P}^m$. We denote by $M_t$ the
submanifold at time $t$. Following {\cite{MR2739807,PiSi2015}}, we have the
evolution equations below.
\begin{eqnarray*}
	\dt | h |^2 & = & \Delta | h |^2 - 2 | \nabla h |^2\\
	&  & + 2 \sum_{\alpha, \beta} \left( \sum_{i, j} h^{\alpha}_{i j}
	h^{\beta}_{i j} \right)^2 + 2 \sum_{i, j, \alpha, \beta} \left( \sum_k
	(h^{\alpha}_{i k} h^{\beta}_{j k} - h^{\beta}_{i k} h^{\alpha}_{j k})
	\right)^2\\
	&  & + 2 \sum_{\alpha, \beta, i, j, k} h^{\alpha}_{i j} h^{\beta}_{i j}
	\bar{R}_{\alpha k \beta k} + 8 \sum_{\alpha, \beta, i, j, k}
	\mathring{h}^{\alpha}_{i k} \mathring{h}^{\beta}_{j k} \bar{R}_{\alpha \beta
		i j}\\
	&  & + 4 \sum_{\alpha, i, j, k, l} (h^{\alpha}_{i k} h^{\alpha}_{j l}
	\bar{R}_{i j k l} - h^{\alpha}_{i k} h^{\alpha}_{j k} \bar{R}_{i l j l}),
\end{eqnarray*}
\[ \dt | H |^2 = \Delta | H |^2 - 2 | \nabla H |^2 + 2 \sum_{i, j} \left(
\sum_{\alpha} H^{\alpha} h^{\alpha}_{i j} \right)^2 + 2 \sum_{\alpha,
	\beta, k} H^{\alpha} H^{\beta} \bar{R}_{\alpha k \beta k} . \]
From (\ref{CPcurv}), these evolution equations can be written as

\begin{lemma}
	\label{evoinCP}For mean curvature flow $F : M^n \times [0, T) \rightarrow
	\mathbb{C}\mathbb{P}^m$, we have
	\begin{enumerateroman}
		\item $\dt | h |^2 = \Delta | h |^2 - 2 | \nabla h |^2 - 2 n | h |^2 + 4 |
		H |^2 + 2 R_1 + 2 S_1$,
		
		\item $\dt | H |^2 = \Delta | H |^2 - 2 | \nabla H |^2 + 2 n | H |^2 + 2
		R_2 + 6 S_2$,
	\end{enumerateroman}
	where
	\[ R_1 = \sum_{\alpha, \beta} \left( \sum_{i, j} h^{\alpha}_{i j}
	h^{\beta}_{i j} \right)^2 + \sum_{i, j, \alpha, \beta} \left( \sum_k
	(h^{\alpha}_{i k} h^{\beta}_{j k} - h^{\beta}_{i k} h^{\alpha}_{j k})
	\right)^2, \]
	\[ R_2 = \sum_{i, j} \left( \sum_{\alpha} H^{\alpha} h^{\alpha}_{i j}
	\right)^2, \]
	\begin{eqnarray*}
		S_1 & = & 3 \sum_{i, j, k} \left( \sum_{\alpha} h^{\alpha}_{i j} J_{k
	\alpha} \right)^2 + 4 \sum_{\alpha, \beta, i, j, k}
		\mathring{h}^{\alpha}_{i k} \mathring{h}^{\beta}_{j k} (J_{i \alpha} J_{j
	\beta} - J_{i \beta} J_{j \alpha})\\
		&  & + 6 \sum_{\alpha, i, j, k, l} \left( \mathring{h}^{\alpha}_{i j}
		\mathring{h}^{\alpha}_{k l} J_{i l} J_{j k} - \mathring{h}^{\alpha}_{i k}
		\mathring{h}^{\alpha}_{j k} J_{i l} J_{j l} \right) + 8 \sum_{\alpha,
	\beta, i, j, k} \mathring{h}^{\alpha}_{i k} \mathring{h}^{\beta}_{j k}
		J_{\alpha \beta} J_{i j},
	\end{eqnarray*}
	\[ S_2 = \sum_k \left( \sum_{\alpha} H^{\alpha} J_{k \alpha} \right)^2 . \]
	
\end{lemma}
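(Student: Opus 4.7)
The plan is to derive the lemma by substituting the explicit K\"ahler curvature formula (\ref{CPcurv}) into the general evolution equations for $|h|^2$ and $|H|^2$ displayed just above the lemma, and then repackaging the resulting sums into the notation $R_1, R_2, S_1, S_2$. These general evolution equations are standard for mean curvature flow in an arbitrary Riemannian ambient, so no further derivation of them is required.

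First I would write (\ref{CPcurv}) in components as
\[
\bar R_{ABCD} = \delta_{AC}\delta_{BD} - \delta_{AD}\delta_{BC} + J_{AC}J_{BD} - J_{AD}J_{BC} + 2J_{AB}J_{CD},
\]
and plug this into each of the five curvature-containing sums in the evolution equations. The constant-curvature pieces (the $\delta$-terms) combine with $|h|^2$ and $|H|^2$ and, via the Gauss-type identities $\sum_i h^\alpha_{ii} = H^\alpha$ and $|\mathring h|^2 = |h|^2 - \tfrac1n|H|^2$, produce exactly the terms $-2n|h|^2 + 4|H|^2$ in (i) and $2n|H|^2$ in (ii). This is the same computation as in the space-form case and is routine.

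The substantive part is organizing the pure $J$-terms. For (ii) one only has $\sum_{\alpha,\beta,k}H^\alpha H^\beta \bar R_{\alpha k\beta k}$; after expanding and contracting using the identity $\sum_B J_{AB}J_{BC} = -\delta_{AC}$ together with $J_{AB}=-J_{BA}$, the $J_{\alpha\beta}J_{kk}$-type pieces vanish, the $J_{\alpha k}J_{k\beta}$ and $J_{\alpha k}J_{\beta k}$ pieces collapse into $3\sum_k(\sum_\alpha H^\alpha J_{k\alpha})^2 = 3S_2$, after a factor count this gives the coefficient $6S_2$ as stated. For (i), the three remaining sums
\[
\sum h^\alpha_{ij}h^\beta_{ij}\bar R_{\alpha k\beta k}, \qquad \sum\mathring h^\alpha_{ik}\mathring h^\beta_{jk}\bar R_{\alpha\beta ij}, \qquad \sum(h^\alpha_{ik}h^\alpha_{jl}\bar R_{ijkl} - h^\alpha_{ik}h^\alpha_{jk}\bar R_{iljl})
\]
need to be expanded term by term. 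In the first, the $\delta$-pieces merge with the $-2n|h|^2+4|H|^2$ block, while the $J$-pieces contribute the $\sum_{i,j,k}(\sum_\alpha h^\alpha_{ij}J_{k\alpha})^2$-term of $S_1$. In the second, passing from $h$ to $\mathring h$ is legitimate because the $\delta$-parts of the curvature annihilate the mean-curvature direction after a short trace check, and the remaining $J$-contributions give the $\mathring h\mathring h(J_{i\alpha}J_{j\beta}-J_{i\beta}J_{j\alpha})$ and $\mathring h\mathring h J_{\alpha\beta}J_{ij}$ terms. The tangential curvature sum in the third bracket produces, via (\ref{CPcurv}) restricted to tangent inputs, the $\mathring h\mathring h J_{il}J_{jk}$ and $\mathring h\mathring h J_{il}J_{jl}$ terms of $S_1$; here one must again use tracelessness of $\mathring h$ to cancel contractions of $h$ with a single $\delta$ against the complementary Kronecker factor.

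The main obstacle I anticipate is purely bookkeeping: there are many index symmetrizations and antisymmetrizations, and one must repeatedly exploit $J_{AB}=-J_{BA}$, $\sum_B J_{AB}J_{BC}=-\delta_{AC}$, the Codazzi symmetry of $h$, and the fact that replacing $h$ with $\mathring h$ in a given sum differs only by terms proportional to traces of $J$ (which vanish) or to $|H|^2$ (which can be absorbed into the $R_2, S_2$ expressions). Once each of the resulting scalar quantities is matched with the definitions of $R_1, R_2, S_1, S_2$ in the statement, the two identities fall out. No analytic input beyond the curvature formula and the K\"ahler identity $\bar\nabla J=0$ (ensuring no covariant derivatives of $J$ arise) is needed.
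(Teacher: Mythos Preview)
Your approach is correct and is exactly what the paper does: it simply records the general evolution equations from \cite{MR2739807,PiSi2015} and then states that substituting the curvature formula (\ref{CPcurv}) yields the lemma, without spelling out any of the index bookkeeping. Your outline is in fact more detailed than the paper's own treatment, and the cancellations you anticipate (in particular that the cross-terms arising when passing from $h$ to $\mathring h$ in the tangential curvature sum cancel between the two pieces $h^\alpha_{ij}h^\alpha_{kl}J_{il}J_{jk}$ and $h^\alpha_{ik}h^\alpha_{jk}J_{il}J_{jl}$) do indeed occur.
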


To do computations involving $J_{A B}$, we present the following well-known
property of the skew-symmetric matrix.

\begin{proposition}
	Let $A$ be a real skew-symmetric matrix. Then there exists an orthogonal
	matrix $C$, such that $C^{- 1} A C$ takes the following form
	\begin{equation}
	\left(\begin{array}{cccccccc}
	0 & \lambda_1 &  &  &  &  &  & \\
	- \lambda_1 & 0 &  &  &  &  &  & \\
	&  & 0 & \lambda_3 &  &  &  & \\
	&  & - \lambda_3 & 0 &  &  &  & \\
	&  &  &  & 0 & \lambda_5 &  & \\
	&  &  &  & - \lambda_5 & 0 &  & \\
	&  &  &  &  &  & \ddots & \\
	&  &  &  &  &  &  & \ddots
	\end{array}\right) . \label{antisym}
	\end{equation}
\end{proposition}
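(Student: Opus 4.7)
The plan is to prove this classical canonical form by leveraging the spectral theorem for symmetric matrices applied to $-A^2$. Because $A$ is real and skew-symmetric, $-A^2 = A^\top A$ is symmetric and positive semi-definite, so it admits an orthonormal eigenbasis with nonnegative eigenvalues. Equivalently, the eigenvalues of $A$ itself are purely imaginary and occur in conjugate pairs $\pm i\lambda$ together with zeros, and my task is to repackage this complex spectral data as a real orthogonal normal form.

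First I would decompose $\mathbb{R}^N = \bigoplus_{\mu \geq 0} V_\mu$ into eigenspaces of $-A^2$. Since $A$ commutes with $-A^2$, each $V_\mu$ is $A$-invariant and satisfies $A^2 = -\mu I$ on it. On $V_0$, the identity $\|Av\|^2 = -\langle v, A^2 v\rangle = 0$ forces $A$ to vanish, which accounts for the trailing zero rows and columns admitted in \eqref{antisym}. On each $V_\mu$ with $\mu > 0$, let $\lambda = \sqrt{\mu}$ and iteratively peel off $A$-invariant planes: pick a unit vector $u_1 \in V_\mu$ and set $u_2 = \lambda^{-1} A u_1$. Skew-symmetry gives $\langle u_1, A u_1\rangle = 0$, and $A^2 u_1 = -\lambda^2 u_1$ gives $\|A u_1\|^2 = \lambda^2$, so $\{u_1, u_2\}$ is orthonormal. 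A direct check then yields $A u_1 = \lambda u_2$ and $A u_2 = -\lambda u_1$, which is exactly a $2\times 2$ block of the required shape (up to an optional swap or sign flip).

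Because $\mathrm{span}(u_1, u_2)$ is $A$-invariant and $A$ is skew-symmetric, its orthogonal complement inside $V_\mu$ is also $A$-invariant, so I would iterate the construction on that complement. The dimension of $V_\mu$ is necessarily even, since the complex eigenvalues $\pm i \lambda$ of $A|_{V_\mu}$ come in conjugate pairs, so the procedure exhausts $V_\mu$ in finitely many steps. Concatenating these pairwise bases over all $\mu > 0$ with an orthonormal basis of $V_0$ produces an orthonormal basis of $\mathbb{R}^N$, and the corresponding orthogonal change-of-basis matrix $C$ conjugates $A$ into the block form \eqref{antisym}. The argument is entirely standard and I do not anticipate any substantive obstacle; the only bookkeeping to watch is matching the precise sign and ordering convention inside each $2 \times 2$ block, which is handled by swapping or negating basis vectors as needed.
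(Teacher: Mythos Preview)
Your argument is correct and entirely standard. Note, however, that the paper does not actually supply a proof of this proposition: it is introduced as ``the following well-known property of the skew-symmetric matrix'' and simply stated. So there is no paper proof to compare against; your write-up provides a clean self-contained justification where the paper opted to cite the result as folklore.
One tiny remark: your justification that $\dim V_\mu$ is even is fine, but you could also argue more directly that the peeling procedure itself never terminates at an odd-dimensional remainder, since a skew operator on an odd-dimensional space has nontrivial kernel, contradicting $A^2 = -\mu I$ there.
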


We use a notation
\[ \tilde{i} = \left\{ \begin{array}{ll}
i + 1, & i \;  \tmop{is}\;  \tmop{odd},\\
i - 1, & i \;  \tmop{is}\;  \tmop{even} .
\end{array} \right. \]
If a matrix $(a_{i j})$ takes the form of (\ref{antisym}), then $a_{i j} = 0$
for all $j \neq \tilde{i}$.

\section{Preservation of curvature pinching}

\subsection{The case of $q = 1$}\

For the mean curvature flow of hypersurfaces in $\mathbb{C}\mathbb{P}^m$,
the evolution equation equations in Lemma \ref{evoinCP} become
\begin{eqnarray*}
	\dt \normho^2 & = & \Delta \normho^2 - 2 \left| \nabla \mathring{h}
	\right|^2 - 2 n \normho^2 + 2 \normho^2 | h |^2 + 6 \normho^2\\
	&  & + 12 \sum_{i, j, k, l} \left( \mathring{h}_{i j} \mathring{h}_{k l}
	J_{i l} J_{j k} - \mathring{h}_{i k} \mathring{h}_{j k} J_{i l} J_{j l}
	\right),
\end{eqnarray*}
\[ \dt | H |^2 = \Delta | H |^2 - 2 | \nabla H |^2 + 2 n | H |^2 + 2 | H |^2 |
h |^2 + 6 | H |^2 . \]
We choose a orthonormal frame $\{ e_i \}$ such that the matrix $(J_{i j})$
takes the form of (\ref{antisym}). Thus
\begin{eqnarray*}
	&  & \sum_{i, j, k, l} \left( \mathring{h}_{i j} \mathring{h}_{k l} J_{i l}
	J_{j k} - \mathring{h}_{i k} \mathring{h}_{j k} J_{i l} J_{j l} \right)\\
	& = & \sum_{i, k} \left( - \mathring{h}_{i \tilde{k}} \mathring{h}_{k
		\tilde{i}} J_{i \tilde{i}} J_{k \tilde{k}} - \left(
	\mathring{h}^{\alpha}_{\tilde{i} k} J_{i \tilde{i}} \right)^2 \right)\\
	& = & - \frac{1}{2} \sum_{i, k, \alpha} \left( \mathring{h}_{i \tilde{k}}
	J_{k \tilde{k}} + \mathring{h}^{\alpha}_{\tilde{i} k} J_{i \tilde{i}}
	\right)^2 \leqslant 0.
\end{eqnarray*}
So, we get
\begin{equation}
\dt \normho^2 \leqslant \Delta \normho^2 - 2 \left| \nabla \mathring{h}
\right|^2 + 2 \normho^2 (| h |^2 - n + 3). \label{parahoh}
\end{equation}

For a real number $\varepsilon \in [0, 1]$, we define a function
$\varphi_{\varepsilon} : [0, + \infty) \rightarrow \mathbb{R}$ by
\begin{equation}
\varphi_{\varepsilon} (x) := d_{\varepsilon} + c_{\varepsilon} x -
\sqrt{b^2 x^2 + 2 a b x + e}, \label{func1}
\end{equation}
where $a = 2 \sqrt{(n^2 - 4 n + 3) b}, b = \min \left\{ \frac{n - 3}{4 n - 4},
\frac{2 n - 5}{n^2 + n - 2} \right\}, c_{\varepsilon} = b + \tfrac{1}{n - 1 +
	\varepsilon}, d_{\varepsilon} = 2 - 2 \varepsilon + a, e =
\sqrt{\varepsilon}$. We define $\varphi = \varphi_0$.

Let $\mathring{\varphi}_{\varepsilon} (x) = \varphi_{\varepsilon} (x) -
\frac{x}{n}$. The following lemma will be proven in the Appendix.

\begin{lemma}
	\label{funcphy}For sufficiently small $\varepsilon$, the function
	$\mathring{\varphi}_{\varepsilon}$ satisfies
	\begin{enumerateroman}
		\item $2 x \mathring{\varphi}_{\varepsilon}'' (x) +
		\mathring{\varphi}_{\varepsilon}' (x) < \frac{2 (n - 1)}{n (n + 2)}$,
		
		\item $\mathring{\varphi}_{\varepsilon} (x) (\varphi_{\varepsilon} (x) - n
		+ 3) - x \mathring{\varphi}_{\varepsilon}' (x) (\varphi_{\varepsilon} (x)
		+ n + 3) < 2 (n - 1)$,
		
		\item $\mathring{\varphi}_{\varepsilon} (x) - x
		\mathring{\varphi}_{\varepsilon}' (x) > 1$.
	\end{enumerateroman}
\end{lemma}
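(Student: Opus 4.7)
The plan is to verify the three inequalities by direct calculation on the explicit formula for $\varphi_\varepsilon$. Writing $g(x) = \sqrt{b^2 x^2 + 2abx + e}$, the derivatives $g'(x) = b(bx+a)/g(x)$ and $g''(x) = b^2(e - a^2)/g(x)^3$ immediately give $\varphi_\varepsilon' = c_\varepsilon - g'$ and $\varphi_\varepsilon'' = -g''$.

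Inequalities (iii) and (i) are the routine part. For (iii), the cancellation $\mathring{\varphi}_\varepsilon - x\mathring{\varphi}_\varepsilon' = \varphi_\varepsilon - x\varphi_\varepsilon'$ plus a short computation gives $\varphi_\varepsilon - x\varphi_\varepsilon' = d_\varepsilon - (abx+e)/g$, and the identity
\[ a^2 g(x)^2 - (abx + e)^2 = (a^2 - e)(2abx + e) > 0 \]
yields $(abx+e)/g < a$; hence the left side exceeds $d_\varepsilon - a = 2 - 2\varepsilon > 1$ for small $\varepsilon$. For (i), a parallel manipulation reduces the inequality to
\[ c_\varepsilon - \frac{3}{n+2} < \frac{b^4 x^3 + 3 ab^3 x^2 + 3 b^2 e x + abe}{g(x)^3}. \]
The right side is nonnegative on $[0,\infty)$, while the pinching bound $b \leq (2n-5)/((n-1)(n+2))$ forces $c_0 - 3/(n+2) \leq 0$; since $c_\varepsilon$ is strictly decreasing in $\varepsilon$, we get $c_\varepsilon - 3/(n+2) < 0$ for any $\varepsilon > 0$, and strict inequality follows.

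The real work is in (ii). Setting $u = \varphi_\varepsilon(x)$ and $v = \varphi_\varepsilon(x) - x\varphi_\varepsilon'(x)$, expansion yields the clean form
\[ \mathring{\varphi}_\varepsilon(\varphi_\varepsilon - n + 3) - x\mathring{\varphi}_\varepsilon'(\varphi_\varepsilon + n + 3) = uv - 2nu + (n+3)v + 2x. \]
The striking feature the proof must confront is that in the regime $b = (n-3)/(4(n-1))$ (relevant for $3 \leq n \leq 7$), where $a = n - 3$, $d_0 = n - 1$ and $g_0(x)^2 = (n-3)^2 x(x + 8(n-1))/(16(n-1)^2)$, a tedious but elementary algebraic check shows that at $\varepsilon = 0$ this expression equals $2(n-1)$ \emph{identically} on $[0, \infty)$. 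Thus (ii) is borderline throughout the whole half-line, and the $\varepsilon$-perturbation is the sole source of the strict inequality, not just at isolated points.

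To extract the strict inequality from $\varepsilon > 0$, I would perform a first-order perturbation in $e = \sqrt{\varepsilon}$. For $x > 0$, Taylor expansion gives $\delta u \approx -e/(2g_0(x))$ and $\delta v \approx -e(2bx + 3a)/(2g_0(x)(bx + 2a))$, both negative; at $x = 0$ the expansion has different scaling, giving $\delta u(0) = \delta v(0) \approx -\varepsilon^{1/4}$. Using the borderline identity $v(u+n+3) = 2nu + 2(n-1) - 2x$ valid at $\varepsilon = 0$ in Case A to eliminate $v$, the first-order variation $\delta f = (v - 2n)\delta u + (u+n+3)\delta v$ reduces to a single scalar comparison that must be verified strictly for every $x \geq 0$. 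The bulk of the proof is this pointwise verification, together with companion arguments for the regime $b = (2n-5)/((n-1)(n+2))$ when $n \geq 8$, where the $\varepsilon = 0$ inequality is already strict at both endpoints and only the interior needs checking, and for the degenerate $n = 3$, where $a = b = 0$ and $\varphi_\varepsilon$ collapses to a linear function. Organizing this case-by-case verification across all admissible $n$, while keeping the $\varepsilon^{1/4}$-scale slack from $v$ uniformly dominant over competing lower-order perturbations, is the main obstacle.
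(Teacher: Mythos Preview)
Your arguments for (i) and (iii) are correct and essentially match the paper's; your identity $a^2 g^2 - (abx+e)^2 = (a^2-e)(2abx+e)$ in (iii) is in fact a cleaner route to $(abx+e)/g < a$ than the paper's cruder term-by-term splitting.

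For (ii) your observation that, in the regime $b = (n-3)/(4(n-1))$, the $\varepsilon = 0$ expression is identically $2(n-1)$ is correct and perceptive. But your description of the other regime is wrong: using only $a^2 = 4(n-1)(n-3)b$, $d_0 = 2+a$, $c_0 = b + 1/(n-1)$ one checks that $\lim_{x\to\infty} f_0(x) = 2(n-1)$ for \emph{every} admissible $b$, so the $x\to\infty$ endpoint is never strict at $\varepsilon = 0$, and your plan of treating regime B as ``strict at both endpoints'' fails as stated.

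More to the point, the paper replaces your entire perturbation program with a single monotonicity argument that works uniformly in both regimes. Writing $f(x)$ for the left side of (ii), the paper computes $f''(x)$ explicitly: it is a quadratic in $x$ times the positive factor $3b^2(a^2-e)/g^5$, and the three coefficients of that quadratic are $b(2b + bd_\varepsilon - ac_\varepsilon)$, $ab(1-n+d_\varepsilon) - c_\varepsilon e$, and $-(n+1)e$. For $\varepsilon > 0$ small, each of these is negative (one-line checks from $b \leq (n-3)/(4(n-1))$), so $f'' < 0$ on $[0,\infty)$. Since a further direct computation gives $\lim_{x\to\infty} f'(x) = 0$, this forces $f' > 0$, hence $f(x) < \lim_{x\to\infty} f(x)$. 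That limit works out to $2(n-1) + e(1 - c_\varepsilon/b) + O(\varepsilon) < 2(n-1)$ because $c_\varepsilon > b$. Thus monotonicity absorbs the borderline behavior you identified, the $\sqrt{\varepsilon}$-gain enters only through a single limit computation, and no case split or first-order expansion is needed.
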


Suppose that $M_0$ is an $n$-dimensional closed hypersurface in
$\mathbb{C}\mathbb{P}^{\frac{n + 1}{2}}$ satisfying $| h |^2 < \varphi (| H
|^2)$. Let $F : M^n \times [0, T) \rightarrow \mathbb{C}\mathbb{P}^{\frac{n
		+ 1}{2}}$ be a mean curvature flow with initial value $M_0$. We will show that
the pinching condition is preserved along the flow. For convenience, we denote
$\mathring{\varphi}_{\varepsilon} (| H |^2)$,
$\mathring{\varphi}'_{\varepsilon} (| H |^2)$,
$\mathring{\varphi}_{\varepsilon}'' (| H |^2)$ by
$\mathring{\varphi}_{\varepsilon}$, $\mathring{\varphi}_{\varepsilon}'$,
$\mathring{\varphi}_{\varepsilon}''$, respectively.

\begin{theorem}
	\label{preh}If the initial value $M_0$ satisfies $| h |^2 < \varphi (| H
	|^2)$, then there exists a small positive number $\varepsilon$, such that
	for all $t \in [0, T)$, we have $| h |^2 < \varphi_{\varepsilon} (| H |^2) -
	\varepsilon | H |^2 - \varepsilon$.
\end{theorem}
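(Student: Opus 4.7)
The plan is to set up a parabolic maximum-principle argument for the scalar
\[ g := |\mathring{h}|^2 - \mathring{\varphi}_\varepsilon(|H|^2) + \varepsilon|H|^2 + \varepsilon. \]
Since $|h|^2 = |\mathring{h}|^2 + \frac{1}{n}|H|^2$ and $\mathring{\varphi}_\varepsilon(x) = \varphi_\varepsilon(x) - x/n$, the desired pinching $|h|^2 < \varphi_\varepsilon(|H|^2) - \varepsilon|H|^2 - \varepsilon$ is equivalent to $g < 0$. Because $M_0$ is compact and $|h|^2 < \varphi(|H|^2) = \varphi_0(|H|^2)$ holds strictly, continuity of $\varphi_\varepsilon$ in $\varepsilon$ allows one to fix $\varepsilon > 0$ so small that $g|_{t=0} < 0$ uniformly on $M$.

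I would then argue by contradiction: suppose $t_0 \in (0,T)$ is the first time at which $\max_{M} g(\cdot, t_0) = 0$, attained at a point $p$. Combining inequality (\ref{parahoh}) (the $q=1$ form of Lemma \ref{evoinCP}(i), after discarding the nonpositive $J$-term already shown above) with Lemma \ref{evoinCP}(ii) and the chain-rule identity $\Delta\mathring{\varphi}_\varepsilon(|H|^2) = \mathring{\varphi}_\varepsilon' \Delta|H|^2 + \mathring{\varphi}_\varepsilon'' |\nabla|H|^2|^2$, I would expand $(\dt - \Delta)g$ at $(p,t_0)$ into a gradient piece and a reaction piece, using the touching identities $|\mathring h|^2 = \mathring\varphi_\varepsilon - \varepsilon|H|^2 - \varepsilon$ and $|h|^2 = \varphi_\varepsilon - \varepsilon|H|^2 - \varepsilon$.

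For the gradient piece, the Kato-type bound $|\nabla|H|^2|^2 \leq 4|H|^2|\nabla H|^2$ together with Lemma \ref{dA2}(i), which gives $|\nabla \mathring h|^2 \geq \frac{2(n-1)}{n(n+2)}|\nabla H|^2 + 2(n-1)$, reduces the gradient contribution to
\[ \Bigl[ 2(\mathring\varphi'_\varepsilon - \varepsilon) + 4|H|^2(\mathring\varphi''_\varepsilon)_{+} - \tfrac{4(n-1)}{n(n+2)} \Bigr]|\nabla H|^2 - 4(n-1). \]
Property (i) of Lemma \ref{funcphy} makes the bracketed coefficient strictly negative, leaving only the constant $-4(n-1)$.

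The reaction piece, after the substitutions, reduces to $2\mathring\varphi_\varepsilon(\varphi_\varepsilon - n + 3) - 2|H|^2\mathring\varphi'_\varepsilon(\varphi_\varepsilon + n + 3)$ plus terms of order $\varepsilon$; by property (ii) of Lemma \ref{funcphy} the leading expression is strictly below $4(n-1)$, exactly the positive quantity needed to be cancelled by the $-4(n-1)$ from the gradient step. Property (iii), which gives a positive lower bound on $\mathring\varphi_\varepsilon - x\mathring\varphi'_\varepsilon$, is what allows the $\varepsilon|H|^2 + \varepsilon$ shift to contribute nonpositively (or at worst with controlled sign) to the net reaction balance. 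Consequently $(\dt - \Delta)g < 0$ at $(p, t_0)$, contradicting the maximum-principle inequality $\dt g \geq \Delta g$. The main obstacle is the bookkeeping of $\varepsilon$-corrections: one must verify that the strict inequalities in Lemma \ref{funcphy} retain uniform slack to absorb all perturbative terms, in particular in the large-$|H|$ regime where $\mathring\varphi_\varepsilon'' \to 0$, the coefficient $\mathring\varphi'_\varepsilon$ approaches its supremum, and the balance between gradient and reaction is tightest.
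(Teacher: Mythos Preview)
Your overall architecture matches the paper's: evolution inequalities for $|\mathring h|^2$ and $|H|^2$, Lemma~\ref{dA2} to convert $|\nabla \mathring h|^2$ into $\tfrac{2(n-1)}{n(n+2)}|\nabla H|^2 + 2(n-1)$, Lemma~\ref{funcphy}(i) to kill the remaining gradient coefficient, and Lemma~\ref{funcphy}(ii) to bound the reaction term at a touching point. That part is fine.

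The gap is in your choice of test function. By building the shift $\varepsilon|H|^2+\varepsilon$ directly into $g$, you pick up an extra reaction term $\varepsilon|H|^2(|h|^2+n+3)$ from the evolution of $|H|^2$. If you expand the full reaction at a point where $g=0$ and separate the ``main'' piece $\mathring\varphi_\varepsilon(\varphi_\varepsilon-n+3)-|H|^2\mathring\varphi'_\varepsilon(\varphi_\varepsilon+n+3)$ from the $\varepsilon$-corrections, a short computation gives corrections of the form
\[
-\varepsilon(|H|^2+1)\bigl(\mathring\varphi_\varepsilon - |H|^2\mathring\varphi'_\varepsilon\bigr)
-\varepsilon\,\varphi_\varepsilon + 2n\,\varepsilon(|H|^2+1) + O(\varepsilon^2),
\]
and since $\mathring\varphi_\varepsilon - x\mathring\varphi'_\varepsilon \to 2$ and $\varphi_\varepsilon(x)\sim x/(n-1)$ as $x\to\infty$, this behaves like $\varepsilon\bigl(2n-2-\tfrac{1}{n-1}\bigr)|H|^2$, which is \emph{positive} and unbounded. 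Lemma~\ref{funcphy}(iii) gives only a lower bound on $\mathring\varphi_\varepsilon - x\mathring\varphi'_\varepsilon$, so it bounds the first correction term from \emph{above} by $-\varepsilon(|H|^2+1)$, not from below; it cannot make the net correction nonpositive. Meanwhile the slack in Lemma~\ref{funcphy}(ii) is only a constant (the function there is increasing with a finite limit strictly below $2(n-1)$), so it cannot absorb a term growing linearly in $|H|^2$. Your closing remark correctly flags the large-$|H|$ regime as the dangerous one, but this is not a bookkeeping issue: the inequality genuinely fails there.

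The paper avoids this by working with the unshifted quantity $U=|\mathring h|^2-\mathring\varphi_\varepsilon$ and proving $U<0$ is preserved using only parts (i) and (ii) of Lemma~\ref{funcphy}. The extra $-\varepsilon|H|^2-\varepsilon$ in the theorem's conclusion is then obtained a~posteriori: since $\varphi_\varepsilon$ is monotone in $\varepsilon$ with linear coefficient $\tfrac{1}{n-1+\varepsilon}$, one can pass from the preserved inequality $|h|^2<\varphi_{\varepsilon_1}(|H|^2)$ to $|h|^2<\varphi_{\varepsilon_2}(|H|^2)-\varepsilon_2|H|^2-\varepsilon_2$ for a suitably smaller $\varepsilon_2$. (Contrast this with the $q\ge n-4$ case in Theorem~\ref{prehico}, where the analogous shift \emph{is} carried inside $U$; that works because there $x\mathring\psi'-\mathring\psi\ge 0$, the opposite sign from here.)
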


\begin{proof}
	Since $M_0$ is compact, there exists a small positive number $\varepsilon$,
	such that $M_0$ satisfies $\normho^2 < \mathring{\varphi}_{\varepsilon}$.
	
	From Lemma \ref{funcphy} (i), we have
	\begin{eqnarray}
	\left( \dt - \Delta \right) \mathring{\varphi}_{\varepsilon} & = & - 2
	\left( \mathring{\varphi}_{\varepsilon}' + 2
	\mathring{\varphi}_{\varepsilon}'' \cdot | H |^2 \right) | \nabla H |^2 +
	2 \mathring{\varphi}_{\varepsilon}' \cdot | H |^2 (| h |^2 + n + 3)
	\nonumber\\
	& \geqslant & - \frac{4 (n - 1)}{n (n + 2)} | \nabla H |^2 + 2
	\mathring{\varphi}_{\varepsilon}' \cdot | H |^2 (| h |^2 + n + 3) . 
	\label{parafh}
	\end{eqnarray}
	Let $U = \normho^2 - \mathring{\varphi}_{\varepsilon}$. We obtain
	\begin{eqnarray*}
		\frac{1}{2} \left( \dt - \Delta \right) U & \leqslant & - \left| \nabla
		\mathring{h} \right|^2 + \frac{2 (n - 1)}{n (n + 2)} | \nabla H |^2\\
		&  & + \normho^2 (| h |^2 - n + 3) - \mathring{\varphi}_{\varepsilon}'
		\cdot | H |^2 (| h |^2 + n + 3) .
	\end{eqnarray*}
	By Lemma \ref{dA2}, we have
	\[ - \left| \nabla \mathring{h} \right|^2 + \frac{2 (n - 1)}{n (n + 2)} |
	\nabla H |^2 \leqslant - 2 (n - 1) . \]
	Thus, at the points where $U = 0$, we get
	\[ \frac{1}{2} \left( \dt - \Delta \right) U \leqslant - 2 (n - 1) +
	\mathring{\varphi}_{\varepsilon} (\varphi_{\varepsilon} - n + 3) -
	\mathring{\varphi}_{\varepsilon}' \cdot | H |^2 (\varphi_{\varepsilon} +
	n + 3) < 0. \]
	Applying the maximum principle, we obtain $U < 0$ for all $t \in [0, T)$.
	
	By choosing a suitable small $\varepsilon$, we complete the proof of
	Theorem \ref{preh}.
\end{proof}

Let $f_{\sigma} = \normho^2 / \mathring{\varphi}_{\varepsilon}^{1 - \sigma}$,
where $\sigma \in (0, 1)$ is a positive constant. Then we have

\begin{lemma}
	\label{ptf1}If $M_0$ satisfies $| h |^2 < \varphi (| H |^2)$, then there
	exists a small positive number $\varepsilon$, such that the following
	inequality holds along the mean curvature flow.
	\[ \dt f_{\sigma} \leqslant \Delta f_{\sigma} +
	\frac{2}{\mathring{\varphi}_{\varepsilon}} | \nabla f_{\sigma} | \left|
	\nabla \mathring{\varphi}_{\varepsilon} \right| - \frac{2 \varepsilon
		f_{\sigma}}{n \normho^2} \left| \nabla \mathring{h} \right|^2 + 2 \sigma
	| h |^2 f_{\sigma} - \frac{\varepsilon}{n} f_{\sigma} . \]
\end{lemma}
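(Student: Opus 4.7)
Set $u = \normho^2$, $g = \mathring{\varphi}_\varepsilon$ and $\alpha = 1-\sigma$, so that $f_\sigma = u g^{-\alpha}$. A routine quotient-rule computation gives
\[
\bigl(\dt - \Delta\bigr)f_\sigma = g^{-\alpha}\bigl(\dt - \Delta\bigr)u - \alpha f_\sigma g^{-1}\bigl(\dt - \Delta\bigr)g + \tfrac{2\alpha}{g}\nabla f_\sigma \cdot \nabla g - \alpha(1-\alpha)f_\sigma g^{-2}|\nabla g|^2.
\]
The last term is non-positive and can be dropped; the cross term is majorized by $\tfrac{2}{g}|\nabla f_\sigma||\nabla g|$ (using $\alpha \leq 1$), which is the first error term in the claim. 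Substituting the inequality (\ref{parahoh}) for $(\dt-\Delta)u$ and (\ref{parafh}) for $(\dt-\Delta)g$ produces a \emph{gradient block} $-2g^{-\alpha}|\nabla \mathring{h}|^2 + \tfrac{4(n-1)\alpha}{n(n+2)}f_\sigma g^{-1}|\nabla H|^2$ and a \emph{reaction block} $2f_\sigma(|h|^2 - n + 3) - 2\alpha f_\sigma g^{-1}\mathring{\varphi}_\varepsilon'|H|^2(|h|^2 + n + 3)$.

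For the gradient block, I split $-2g^{-\alpha}|\nabla \mathring{h}|^2 = -\tfrac{2\varepsilon}{n}g^{-\alpha}|\nabla \mathring{h}|^2 - 2(1-\tfrac{\varepsilon}{n})g^{-\alpha}|\nabla \mathring{h}|^2$; the first half is precisely $-\tfrac{2\varepsilon f_\sigma}{n\normho^2}|\nabla \mathring{h}|^2$ since $f_\sigma/\normho^2 = g^{-\alpha}$. On the second half I apply Lemma \ref{dA2} in the hypersurface case, $|\nabla \mathring{h}|^2 \geq \tfrac{2(n-1)}{n(n+2)}|\nabla H|^2 + 2(n-1)$. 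The combined $|\nabla H|^2$ coefficient then equals $\tfrac{4(n-1)}{n(n+2)g^\alpha}\bigl[\alpha u/g - (1-\varepsilon/n)\bigr]$, which is non-positive provided $\varepsilon$ is small enough relative to $\sigma$ (where the strict pinching $u/g < 1$ of Theorem \ref{preh} is used), and can be discarded; what remains from this step is a residue $-\tfrac{4(n-1)(1-\varepsilon/n)}{g^\alpha}$.

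For the reaction block, I regard the bracket $g(|h|^2 - n + 3) - \alpha|H|^2 \mathring{\varphi}_\varepsilon'(|h|^2 + n + 3)$ as a function of $|h|^2$; its derivative $g - \alpha|H|^2\mathring{\varphi}_\varepsilon'$ is strictly positive, immediately when $\mathring{\varphi}_\varepsilon' \leq 0$, and via Lemma \ref{funcphy}(iii) with $\alpha<1$ when $\mathring{\varphi}_\varepsilon' > 0$. Monotonicity and the pinching $|h|^2 < \varphi_\varepsilon$ from Theorem \ref{preh} let me replace $|h|^2$ by $\varphi_\varepsilon$, after which Lemma \ref{funcphy}(ii) strictly bounds the bracket by $2(n-1) + \sigma|H|^2 \mathring{\varphi}_\varepsilon'(\varphi_\varepsilon + n + 3)$. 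Multiplying by $2f_\sigma/g$, combining the $2(n-1)$ piece with the $-\tfrac{4(n-1)(1-\varepsilon/n)}{g^\alpha}$ residue from the previous step, and using the quantitative pinching $u \leq g - \varepsilon(|H|^2+1)$ from Theorem \ref{preh} in the factor $u/g$, I obtain the $-\tfrac{\varepsilon}{n}f_\sigma$ remainder; the $\sigma$-correction $\tfrac{2\sigma f_\sigma}{g}|H|^2 \mathring{\varphi}_\varepsilon'(\varphi_\varepsilon + n + 3)$ is absorbed into $2\sigma|h|^2 f_\sigma$ by comparing $|h|^2$ with $\varphi_\varepsilon$ modulo the $\sigma$-tolerance.

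The main obstacle is precisely this bookkeeping: the $\varepsilon$-slack granted by Theorem \ref{preh} has to remain strictly positive simultaneously through the gradient cancellation (where it yields the surviving $-\tfrac{4(n-1)(1-\varepsilon/n)}{g^\alpha}$ constant) and through the reaction cancellation (where it produces the $-\tfrac{\varepsilon}{n}f_\sigma$ term), and one must verify that the $\sigma$-correction from the Lemma \ref{funcphy}(ii) bound genuinely fits inside $2\sigma|h|^2 f_\sigma$. This is what forces $\varepsilon$ to be chosen small depending on $\sigma$, in particular $\varepsilon < n\sigma$, as already arranged in Theorem \ref{preh} and Lemma \ref{funcphy}.
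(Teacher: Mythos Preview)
Your overall scaffold (quotient rule, then split into gradient and reaction blocks, then invoke Lemmas~\ref{dA2} and~\ref{funcphy}) matches the paper's, but two of your bookkeeping claims do not hold, and the lemma does not follow from your outline as written.

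\textbf{The $-\tfrac{\varepsilon}{n}f_\sigma$ term does not come out of your combination.} After your gradient and reaction estimates you are combining
\[
\frac{4(n-1)f_\sigma}{g}\;-\;\frac{4(n-1)(1-\varepsilon/n)}{g^{\alpha}}
=\frac{4(n-1)}{g^{\alpha}}\Bigl[\tfrac{u}{g}-1+\tfrac{\varepsilon}{n}\Bigr],
\]
and then inserting $u/g<1-\varepsilon(|H|^2+1)/g$. This gives something of order $-C\varepsilon/g^{\alpha}$ (a bounded negative quantity), not $-\tfrac{\varepsilon}{n}f_\sigma=-\tfrac{\varepsilon u}{n g^{\alpha}}$; for large $|H|^2$ one has $u\sim |H|^2/(n(n-1))\to\infty$, so your term is strictly weaker and the claimed inequality fails. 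The paper instead uses the \emph{sharp} pinching $|h|^2<\varphi_\varepsilon-\varepsilon|H|^2-\varepsilon$ already when replacing $|h|^2$ in the reaction bracket; this produces an extra factor $-\tfrac{(1-\sigma)\varepsilon}{g}\bigl(\mathring\varphi_\varepsilon-|H|^2\mathring\varphi_\varepsilon'\bigr)(|H|^2+1)$, and then Lemma~\ref{funcphy}(iii) (which gives $>1$, not merely $>0$) together with $(|H|^2+1)/g\ge 1/n$ yields the genuine $-\varepsilon/(2n)$ gain.

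\textbf{The $\sigma$-correction cannot be absorbed as you claim.} Your leftover $\tfrac{2\sigma f_\sigma}{g}|H|^2\mathring\varphi_\varepsilon'(\varphi_\varepsilon+n+3)$ is, via Lemma~\ref{funcphy}(iii), at most $2\sigma f_\sigma(\varphi_\varepsilon+n+3)$, and $\varphi_\varepsilon+n+3>|h|^2$ always, so it does not fit inside $2\sigma|h|^2 f_\sigma$. The paper avoids this entirely by an algebraic trick \emph{before} any inequality: rewrite
\[
|h|^2-n+3-(1-\sigma)\tfrac{\mathring\varphi_\varepsilon'|H|^2}{g}(|h|^2+n+3)
=\tfrac{1-\sigma}{g}\bigl[(g-\mathring\varphi_\varepsilon'|H|^2)|h|^2-\mathring\varphi_\varepsilon'|H|^2(n+3)\bigr]-n+3+\sigma|h|^2,
\]
which isolates $\sigma|h|^2$ exactly, with no residual $\sigma$-term to absorb.

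Finally, your constraint $\varepsilon<n\sigma$ is both unnecessary (the sharper pinching already gives $\alpha u/g<1-\varepsilon/n$) and harmful for the later iteration, where $\sigma\to 0$ while $\varepsilon$ must stay fixed. The paper's gradient estimate uses the pinching directly: $-\tfrac{|\nabla\mathring h|^2}{u}+\tfrac{|\nabla\mathring h|^2-2(n-1)}{g}=\tfrac{u-g}{ug}|\nabla\mathring h|^2-\tfrac{2(n-1)}{g}\le -\tfrac{\varepsilon(|H|^2+1)}{ug}|\nabla\mathring h|^2-\tfrac{2(n-1)}{g}$, and then $(|H|^2+1)/g\ge 1/n$ gives the $-\tfrac{\varepsilon}{nu}|\nabla\mathring h|^2$ term with no $\sigma$-dependence.
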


\begin{proof}
	By a straightforward calculation, we have
	\begin{eqnarray*}
		\left( \dt - \Delta \right) f_{\sigma} & = & f_{\sigma} \left[
		\frac{1}{\normho^2} \left( \dt - \Delta \right) \normho^2 - \frac{1 -
			\sigma}{\mathring{\varphi}_{\varepsilon}} \left( \dt - \Delta \right)
		\mathring{\varphi}_{\varepsilon} \right]\\
		&  & + 2 (1 - \sigma) \frac{\left\langle \nabla f_{\sigma}, \nabla
			\mathring{\varphi}_{\varepsilon}
			\right\rangle}{\mathring{\varphi}_{\varepsilon}} - \sigma (1 - \sigma)
		f_{\sigma}  \frac{\left| \nabla \mathring{\varphi}_{\varepsilon}
			\right|^2}{\left| \mathring{\varphi}_{\varepsilon} \right|^2} .
	\end{eqnarray*}
	Using (\ref{parahoh}) and (\ref{parafh}), we have
	\begin{eqnarray*}
		\left( \dt - \Delta \right) f_{\sigma} & \leqslant & 2 f_{\sigma} \left[ -
		\frac{\left| \nabla \mathring{h} \right|^2}{\normho^2} + \frac{2 (n -
			1)}{n (n + 2)} \frac{| \nabla H |^2}{\mathring{\varphi}_{\varepsilon}}
		\right]\\
		&  & + 2 f_{\sigma} \left[ | h |^2 - n + 3 - (1 - \sigma)
		\frac{\mathring{\varphi}_{\varepsilon}' \cdot | H
			|^2}{\mathring{\varphi}_{\varepsilon}} (| h |^2 + n + 3) \right]\\
		&  & + \frac{2}{\mathring{\varphi}} | \nabla f_{\sigma} | \left| \nabla
		\mathring{\varphi} \right| .
	\end{eqnarray*}
	From Lemma \ref{dA2} and Theorem \ref{preh}, we have
	\begin{eqnarray*}
		- \frac{\left| \nabla \mathring{h} \right|^2}{\normho^2} + \frac{2 (n -
			1)}{n (n + 2)} \frac{| \nabla H |^2}{\mathring{\varphi}_{\varepsilon}} &
		\leqslant & - \frac{\left| \nabla \mathring{h} \right|^2}{\normho^2} +
		\frac{\left| \nabla \mathring{h} \right|^2 - 2 (n -
			1)}{\mathring{\varphi}_{\varepsilon}}\\
		& \leqslant & \frac{\normho^2 -
			\mathring{\varphi}_{\varepsilon}}{\normho^2
			\mathring{\varphi}_{\varepsilon}} \left| \nabla \mathring{h} \right|^2 -
		\frac{2 (n - 1)}{\mathring{\varphi}_{\varepsilon}}\\
		& \leqslant & - \varepsilon \frac{| H |^2 + 1}{\normho^2
			\mathring{\varphi}_{\varepsilon}} \left| \nabla \mathring{h} \right|^2 -
		\frac{2 (n - 1)}{\mathring{\varphi}_{\varepsilon}}\\
		& \leqslant & - \frac{\varepsilon}{n \normho^2} \left| \nabla
		\mathring{h} \right|^2 - \frac{2 (n -
			1)}{\mathring{\varphi}_{\varepsilon}} .
	\end{eqnarray*}
	From Lemma \ref{funcphy} (ii) and (iii), we have
	\begin{eqnarray*}
		&  & | h |^2 - n + 3 - (1 - \sigma)
		\frac{\mathring{\varphi}_{\varepsilon}' \cdot | H
			|^2}{\mathring{\varphi}_{\varepsilon}} (| h |^2 + n + 3)\\
		& = & \frac{1 - \sigma}{\mathring{\varphi}_{\varepsilon}} \left[ \left(
		\mathring{\varphi}_{\varepsilon} - \mathring{\varphi}_{\varepsilon}' \cdot
		| H |^2 \right) | h |^2 - \mathring{\varphi}_{\varepsilon}' \cdot | H |^2
		(n + 3) \right] - n + 3 + \sigma | h |^2\\
		& \leqslant & \frac{1 - \sigma}{\mathring{\varphi}_{\varepsilon}} \left[
		\left( \mathring{\varphi}_{\varepsilon} -
		\mathring{\varphi}_{\varepsilon}' \cdot | H |^2 \right)
		(\varphi_{\varepsilon} - \varepsilon | H |^2 - \varepsilon) -
		\mathring{\varphi}_{\varepsilon}' \cdot | H |^2 (n + 3) \right] - n + 3 +
		\sigma | h |^2\\
		& = & \frac{1 - \sigma}{\mathring{\varphi}_{\varepsilon}} \left[ \left(
		\mathring{\varphi}_{\varepsilon} - \mathring{\varphi}_{\varepsilon}' \cdot
		| H |^2 \right) \varphi_{\varepsilon} - \mathring{\varphi}_{\varepsilon}'
		\cdot | H |^2 (n + 3) \right] - n + 3 + \sigma | h |^2\\
		&  & - \frac{(1 - \sigma) \varepsilon}{\mathring{\varphi}_{\varepsilon}}
		\left( \mathring{\varphi}_{\varepsilon} -
		\mathring{\varphi}_{\varepsilon}' \cdot | H |^2 \right) (| H |^2 + 1)\\
		& \leqslant & (1 - \sigma) \left[ n - 3 + \frac{2 (n -
			1)}{\mathring{\varphi}_{\varepsilon}} \right] - n + 3 + \sigma | h |^2 -
		\frac{(1 - \sigma) \varepsilon}{\mathring{\varphi}_{\varepsilon}} (| H |^2
		+ 1)\\
		& \leqslant & \sigma | h |^2 + \frac{2 (n -
			1)}{\mathring{\varphi}_{\varepsilon}} - \frac{\varepsilon}{2 n} .
	\end{eqnarray*}
	This completes the proof of the lemma.
\end{proof}

\subsection{The case of $2 \leqslant q < n - 4$}\

At a fixed point $p \in M$, we always choose the orthonormal frame $\{
e_{\alpha} \}$ for $N_p M$ such that $H = | H | e_{n + 1}$. For fixed
$\alpha$, let $\left| \mathring{h}^{\alpha} \right|^2 = \sum_{i, j} \left(
\mathring{h}^{\alpha}_{i j} \right)^2$. Set $\rho_1 = \left| \mathring{h}^{n +
	1} \right|^2$, $\rho_2 = \sum_{\alpha > n + 1} \left| \mathring{h}^{\alpha}
\right|^2$, $\theta_1 = | P e_{n + 1} |^2$ and $\theta_2 = \sum_{\alpha > n +
	1} | P e_{\alpha} |^2$. Notice that $\theta_1 \leqslant 1$ and $\theta_2
\leqslant q - 1$.

\begin{lemma}
	\label{nqineq}{\tmdummy}
	
	\begin{enumerateroman}
		\item  $R_1 \leqslant | h |^4 - \frac{2}{n} \rho_2 | H |^2 + 2 \rho_1
		\rho_2 + \frac{1}{2} \rho_2^2$,
		
		\item $R_2 = | H |^2 (| h |^2 - \rho_2)$,
		
		\item $S_1 \leqslant \frac{3}{n} S_2 + 3 \normho^2 + 8 \sqrt{\theta_2
			\rho_1 \rho_2} + 4 \theta_2 \rho_2$.
	\end{enumerateroman}
\end{lemma}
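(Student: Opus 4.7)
The plan is to establish the three estimates by systematically exploiting the adapted frame in which $H = |H|e_{n+1}$. Under this choice $h^\alpha = \mathring{h}^\alpha$ for all $\alpha > n+1$, while $h^{n+1} = \mathring{h}^{n+1} + (|H|/n)g$, and therefore $|h|^2 = \rho_1 + \rho_2 + |H|^2/n$. All three parts follow from splitting each sum according to whether a normal index equals $n+1$ or exceeds it.

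Part (ii) is immediate: $H^\alpha = |H|\delta_{\alpha,n+1}$ forces
\[ R_2 = \sum_{ij}(|H|h^{n+1}_{ij})^2 = |H|^2|h^{n+1}|^2 = |H|^2\Bigl(\rho_1 + \tfrac{|H|^2}{n}\Bigr) = |H|^2(|h|^2 - \rho_2). \]

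For part (i), I would decompose $R_1$ using the trace/traceless split of $h^{n+1}$. Since the identity commutes with everything and is trace-orthogonal to any traceless matrix, every commutator $[h^\alpha,h^\beta]$ reduces to a commutator of traceless parts, while $\operatorname{tr}(h^\alpha h^\beta)$ equals $\rho_1 + |H|^2/n$ when $\alpha=\beta=n+1$ and $\operatorname{tr}(\mathring{h}^\alpha\mathring{h}^\beta)$ otherwise. I would then invoke the Li--Li/Chen--Xu inequality $\operatorname{tr}(AB)^2 + |[A,B]|^2 \leq 2|A|^2|B|^2$ on the mixed $\alpha=n+1,\beta>n+1$ block and the sharper $\sum_{\alpha,\beta>n+1}(\operatorname{tr}(\mathring{h}^\alpha\mathring{h}^\beta)^2 + |[\mathring{h}^\alpha,\mathring{h}^\beta]|^2) \leq \tfrac{3}{2}\rho_2^2$ on the purely traceless block. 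Comparing the resulting sum termwise with the expansion $|h|^4 = (\rho_1+\rho_2+|H|^2/n)^2$ produces the asserted bound.

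For part (iii), I would estimate the four summands of $S_1$ in turn:
\begin{enumerateroman}
\item For $3\sum_{ijk}(\sum_\alpha h^\alpha_{ij}J_{k\alpha})^2$, substitute $h^\alpha = \mathring{h}^\alpha + (H^\alpha/n)g$: the cross term vanishes by tracelessness, the pure-trace part equals exactly $3S_2/n$, and the pure-traceless part is at most $3\normho^2$. The latter follows by rewriting it as $\sum_{\alpha\beta}\operatorname{tr}(\mathring{h}^\alpha\mathring{h}^\beta)T_{\alpha\beta}$ with $T_{\alpha\beta} := \sum_k J_{k\alpha}J_{k\beta}$ and noting that $T$ is a principal block of $J^\top J = I$, hence $T \preceq I$.
\item The coefficient-$6$ summand is nonpositive: for each fixed $\alpha$, the block-diagonalization of $(J_{ij})$ carried out in the $q=1$ case applies verbatim with $\mathring{h}$ replaced by $\mathring{h}^\alpha$, producing a manifestly nonpositive sum.
\item The coefficient-$4$ and coefficient-$8$ summands both vanish when $\alpha=\beta=n+1$. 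Splitting them into (a) cross terms with exactly one index equal to $n+1$, and (b) pure terms with both indices $>n+1$, and applying Cauchy--Schwarz that bundles $\sum_{\alpha>n+1,i}J_{i\alpha}^2$ and $\sum_{\alpha>n+1}J_{\alpha,n+1}^2$ (both controlled by $\theta_2$) against the appropriate factors of $\mathring{h}^{n+1}$ and $\mathring{h}^\beta$, produces $8\sqrt{\theta_2\rho_1\rho_2}$ from (a) and $4\theta_2\rho_2$ from (b).
\end{enumerateroman}

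The main obstacle will be the bookkeeping in part (iii): the coefficient-$4$ and coefficient-$8$ pieces carry no definite sign, so Cauchy--Schwarz must be applied consistently across both the splits $\alpha=n+1$ vs.\ $\alpha>n+1$ and $\beta=n+1$ vs.\ $\beta>n+1$, and one must verify that the cross contributions aggregate to exactly $8\sqrt{\theta_2\rho_1\rho_2}$ with the correct factor $8$ rather than some weaker constant. A secondary concern in part (i) is that the Li--Li inequality must be applied with the sharp constants that yield the coefficients $2$ and $\tfrac12$ in front of $\rho_1\rho_2$ and $\rho_2^2$, respectively; using a nonsharp constant would still give a polynomial bound but not the one needed downstream.
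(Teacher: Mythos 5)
Your treatments of (i) and (ii) are fine and essentially coincide with the paper's: in the adapted frame the mixed block is handled by the bound $\operatorname{tr}(AB)^2+|[A,B]|^2\leqslant 2|A|^2|B|^2$ (which the paper proves by diagonalizing $\mathring{h}^{n+1}$ and applying Cauchy--Schwarz), the purely traceless block by Theorem 1 of Li--Li, and $R_2$ is immediate. The gap is in (iii), in your item for the coefficient-$4$ and coefficient-$8$ summands. The coefficient-$4$ term involves only the mixed components $J_{i\alpha}$ and is indeed bounded by $8\sqrt{\theta_2\rho_1\rho_2}+4\theta_2\rho_2$ (this is exactly the paper's estimate of that term). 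But the coefficient-$8$ term $8\sum\mathring{h}^{\alpha}_{ik}\mathring{h}^{\beta}_{jk}J_{\alpha\beta}J_{ij}$ involves only the tangent--tangent and normal--normal components of $J$, and these are not controlled by $\theta_2$: your auxiliary quantity $\sum_{\alpha>n+1}J_{\alpha,n+1}^2$ equals $1-\theta_1$, not something $O(\theta_2)$. Concretely, if $P\equiv 0$ (the situation near a complex submanifold, which the scheme of the paper must allow since the flow may converge to $\mathbb{C}\mathbb{P}^{n/2}$), then $\theta_1=\theta_2=0$ while $J_{ij}$ and $J_{\alpha\beta}$ are full complex structures, and choosing $\mathring{h}^{\alpha},\mathring{h}^{\beta}$ with $[\mathring{h}^{\alpha},\mathring{h}^{\beta}]$ parallel to $(J_{ij})$ and $J_{\alpha\beta}=\pm 1$ makes the coefficient-$8$ term positive of size comparable to $8\normho^2$; it therefore cannot be absorbed into $8\sqrt{\theta_2\rho_1\rho_2}+4\theta_2\rho_2=0$, nor into the leftover after you have already spent the full $3\normho^2$ on the first summand.

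The missing idea is that the coefficient-$8$ term must be paired with the coefficient-$6$ term rather than with the coefficient-$4$ term. In the paper, after block-diagonalizing $(J_{ij})$, the coefficient-$6$ term is written as $-3\sum a^2$ and the coefficient-$8$ term as $-4\sum a\,b$ with $b=\sum_{\beta}\mathring{h}^{\beta}_{ik}J_{\alpha\beta}$, so that $-3a^2-4ab\leqslant\frac{4}{3}b^2$ and $\sum b^2=\normho^2-\left|P\mathring{h}\right|^2$; combined with the exact value $\frac{3}{n}S_2+3\left|P\mathring{h}\right|^2$ of the first summand and $\left|P\mathring{h}\right|^2\leqslant\normho^2$ applied only at the end, this yields the $3\normho^2$ in the lemma. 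Your plan discards precisely the negative square $-3\sum a^2$ (by declaring the coefficient-$6$ term nonpositive) and prematurely replaces $3\left|P\mathring{h}\right|^2$ by $3\normho^2$, so the slack needed to control the coefficient-$8$ term is gone; as it stands, step (iii) of your proposal does not close.
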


\begin{proof}
	The estimates of $R_1$ and $R_2$ are similar to that in {\cite{MR2739807}}.
	We choose an orthonormal frame $\{ e_i \}$ for the tangent space, such that
	$\mathring{h}^{n + 1}_{i j} = \mathring{\lambda}_i \delta_{i j}$. Then we
	have
	\begin{eqnarray}
	R_1 & = & \rho_1^2 + \frac{2}{n} \rho_1 | H |^2 + \frac{1}{n^2} | H |^4
	\nonumber\\
	&  & + 2 \sum_{\alpha > n + 1} \left( \sum_i \mathring{\lambda}_i 
	\mathring{h}^{\alpha}_{i i} \right)^2 + 2 \sum_{\tmscript{\begin{array}{c}
			\alpha > n + 1\\
			i \neq j
			\end{array}}} \left( \left( \mathring{\lambda}_i - \mathring{\lambda}_j
	\right) \mathring{h}^{\alpha}_{i j} \right)^2 \\
	&  & + \sum_{\alpha, \beta > n + 1} \left( \sum_{i, j}
	\mathring{h}^{\alpha}_{i j}  \mathring{h}^{\beta}_{i j} \right)^2 +
	\sum_{\tmscript{\begin{array}{c}
			\alpha, \beta > n + 1\\
			i, j
			\end{array}}} \left( \sum_k \left( \mathring{h}^{\alpha}_{i k} 
	\mathring{h}^{\beta}_{j k} - \mathring{h}^{\alpha}_{j k} 
	\mathring{h}^{\beta}_{i k} \right) \right)^2 . \nonumber
	\end{eqnarray}
	Using the Cauchy-Schwarz inequality, we get
	\begin{eqnarray*}
		\left( \sum_i \mathring{\lambda}_i  \mathring{h}^{\alpha}_{i i} \right)^2
		+ \sum_{i \neq j} \left( \left( \mathring{\lambda}_i -
		\mathring{\lambda}_j \right) \mathring{h}^{\alpha}_{i j} \right)^2 &
		\leqslant & \rho_1 \sum_i \left(  \mathring{h}^{\alpha}_{i i} \right)^2 +
		2 \sum_{i \neq j} \left( \mathring{\lambda}_i^2 + \mathring{\lambda}_j^2
		\right) \left( \mathring{h}^{\alpha}_{i j} \right)^2\\
		& \leqslant & \rho_1 \sum_i \left(  \mathring{h}^{\alpha}_{i i} \right)^2
		+ 2 \rho_1 \sum_{i \neq j} \left( \mathring{h}^{\alpha}_{i j} \right)^2\\
		& \leqslant & 2 \rho_1 \left|  \mathring{h}^{\alpha} \right|^2 .
	\end{eqnarray*}
	It follows from Theorem 1 of {\cite{MR1161925}} that
	\[ \sum_{\alpha, \beta > n + 1} \left( \sum_{i, j} \mathring{h}^{\alpha}_{i
		j}  \mathring{h}^{\beta}_{i j} \right)^2 +
	\sum_{\tmscript{\begin{array}{c}
			\alpha, \beta > n + 1\\
			i, j
			\end{array}}} \left( \sum_k \left( \mathring{h}^{\alpha}_{i k} 
	\mathring{h}^{\beta}_{j k} - \mathring{h}^{\alpha}_{j k} 
	\mathring{h}^{\beta}_{i k} \right) \right)^2 \leqslant \frac{3}{2}
	\rho_2^2 . \]
	Thus we obtain
	\begin{eqnarray*}
		R_1 & \leqslant & \rho_1^2 + \frac{2}{n} \rho_1 | H |^2 + \frac{1}{n^2} |
		H |^4 + 4 \rho_1 \rho_2 + \frac{3}{2} \rho_2^2\\
		& = & | h |^4 - \frac{2}{n} \rho_2 | H |^2 + 2 \rho_1 \rho_2 +
		\frac{1}{2} \rho_2^2 .
	\end{eqnarray*}
	$R_2$ can be written as
	\begin{equation}
	R_2 = \sum_{i, j} (| H | h^{n + 1}_{i j})^2 = | H |^2 (| h |^2 - \rho_2) .
	\label{R2equa}
	\end{equation}
	Next, we have
	\begin{eqnarray}
	&  & \frac{1}{n} \sum_k \left( \sum_{\alpha} H^{\alpha} J_{k \alpha}
	\right)^2 + \sum_{i, j, k} \left( \sum_{\alpha} \mathring{h}^{\alpha}_{i
		j} J_{k \alpha} \right)^2 \nonumber\\
	& = & \frac{S_2}{n} + \sum_{i, j} \left| P \mathring{h} (e_i, e_j)
	\right|^2  \label{s2nP}\\
	& = & \frac{S_2}{n} + \left| P \mathring{h} \right|^2 . \nonumber
	\end{eqnarray}
	Choose an orthonormal frame $\{ e_i \}$ such that the matrix $(J_{i j})$
	takes the form of (\ref{antisym}). Thus
	\begin{eqnarray}
	&  & 6 \sum_{\alpha, i, j, k, l} (h^{\alpha}_{i j} h^{\alpha}_{k l} J_{i
		l} J_{j k} - h^{\alpha}_{i k} h^{\alpha}_{j k} J_{i l} J_{j l}) + 8
	\sum_{\alpha, \beta, i, j, k} h^{\alpha}_{i k} h^{\beta}_{j k} J_{\alpha
		\beta} J_{i j} \nonumber\\
	& = & 6 \sum_{i, k, \alpha} \left( - \mathring{h}^{\alpha}_{i \tilde{k}}
	\mathring{h}^{\alpha}_{k \tilde{i}} J_{i \tilde{i}} J_{k \tilde{k}} -
	\left( \mathring{h}^{\alpha}_{\tilde{i} k} J_{i \tilde{i}} \right)^2
	\right) + 8 \sum_{i, k, \alpha, \beta} \mathring{h}^{\alpha}_{i k}
	\mathring{h}^{\beta}_{\tilde{i} k} J_{\alpha \beta} J_{i \tilde{i}}
	\nonumber\\
	& = & - 3 \sum_{i, k, \alpha} \left( \mathring{h}^{\alpha}_{i \tilde{k}}
	J_{k \tilde{k}} + \mathring{h}^{\alpha}_{\tilde{i} k} J_{i \tilde{i}}
	\right)^2 - 4 \sum_{i, k, \alpha} \left[ \left( \mathring{h}^{\alpha}_{i
		\tilde{k}} J_{k \tilde{k}} + \mathring{h}^{\alpha}_{\tilde{i} k} J_{i
		\tilde{i}} \right) \sum_{\beta} \mathring{h}^{\beta}_{i k} J_{\alpha
		\beta} \right] \nonumber\\
	& \leqslant & \frac{4}{3} \sum_{i, k, \alpha} \left( \sum_{\beta}
	\mathring{h}^{\beta}_{i k} J_{\alpha \beta} \right)^2  \label{hobik}\\
	& = & \frac{4}{3} \sum_{i, k} \left| \left( J \mathring{h} (e_i, e_k)
	\right)^N \right|^2 \nonumber\\
	& = & \frac{4}{3} \sum_{i, k} \left( \left| J \mathring{h} (e_i, e_k)
	\right|^2 - \left| P \mathring{h} (e_i, e_k) \right|^2 \right) \nonumber\\
	& = & \frac{4}{3} \left( \normho^2 - \left| P \mathring{h} \right|^2
	\right) . \nonumber
	\end{eqnarray}

	For fixed $\alpha, \beta$, we choose an orthonormal frame $\{ e_i \}$, such
	that the $n \times n$ matrix $(J_{i \alpha} J_{j \beta} - J_{j \alpha} J_{i
		\beta})$ takes the form of (\ref{antisym}). Thus we get
	\begin{eqnarray*}
		&  & \sum_{i, j, k} \mathring{h}^{\alpha}_{i k} \mathring{h}^{\beta}_{j
			k} (J_{i \alpha} J_{j \beta} - J_{j \alpha} J_{i \beta})\\
		& = & \sum_{i, k} \mathring{h}^{\alpha}_{i k}
		\mathring{h}^{\beta}_{\tilde{i} k} (J_{i \alpha} J_{\tilde{i} \beta} -
		J_{\tilde{i} \alpha} J_{i \beta})\\
		& \leqslant & \sum_{i, k} \left| \mathring{h}^{\alpha}_{i k}
		\mathring{h}^{\beta}_{\tilde{i} k} \right| \sqrt{(J_{i \alpha})^2 +
			(J_{\tilde{i} \alpha})^2} \sqrt{(J_{i \beta})^2 + (J_{\tilde{i}
				\beta})^2}\\
		& \leqslant & \sum_{i, k} \left| \mathring{h}^{\alpha}_{i k}
		\mathring{h}^{\beta}_{\tilde{i} k} \right| | P e_{\alpha} | | P e_{\beta}
		|\\
		& \leqslant & \sqrt{\sum_{i, k} \left( \mathring{h}^{\alpha}_{i k}
			\right)^2} \sqrt{\sum_{i, k} \left( \mathring{h}^{\beta}_{\tilde{i} k}
			\right)^2} | P e_{\alpha} | | P e_{\beta} |\\
		& = & \left| \mathring{h}^{\alpha} \right| \left| \mathring{h}^{\beta}
		\right| | P e_{\alpha} | | P e_{\beta} | .
	\end{eqnarray*}
	Then
	\begin{eqnarray*}
		&  & \sum_{\alpha, \beta, i, j, k} \mathring{h}^{\alpha}_{i k}
		\mathring{h}^{\beta}_{j k} (J_{i \alpha} J_{j \beta} - J_{j \alpha} J_{i
			\beta})\\
		& \leqslant & \sum_{\alpha \neq \beta} | P e_{\alpha} | | P e_{\beta} |
		\left| \mathring{h}^{\alpha} \right| \left| \mathring{h}^{\beta} \right|\\
		& = & 2 | P e_{n + 1} | \left| \mathring{h}^{n + 1} \right| \sum_{\alpha
			> n + 1} | P e_{\alpha} | \left| \mathring{h}^{\alpha} \right| +
		\sum_{\tmscript{\begin{array}{c}
					\alpha, \beta > n + 1\\
					\alpha \neq \beta
				\end{array}}} | P e_{\alpha} | | P e_{\beta} | \left|
				\mathring{h}^{\alpha} \right| \left| \mathring{h}^{\beta} \right|\\
				& \leqslant & 2 \sqrt{\rho_1} \sum_{\alpha > n + 1} | P e_{\alpha} |
				\left| \mathring{h}^{\alpha} \right| + \left( \sum_{\alpha > n + 1} | P
				e_{\alpha} | \left| \mathring{h}^{\alpha} \right| \right)^2 .
			\end{eqnarray*}
			Using the Cauchy inequality, we have
			\[ \left( \sum_{\alpha > n + 1} | P e_{\alpha} | \left|
			\mathring{h}^{\alpha} \right| \right)^2 \leqslant \left( \sum_{\alpha > n
				+ 1} | P e_{\alpha} |^2 \right) \left( \sum_{\alpha > n + 1} \left|
			\mathring{h}^{\alpha} \right|^2 \right) = \theta_2 \rho_2 . \]
			So
			\[ \sum_{\alpha, \beta, i, j, k} \mathring{h}^{\alpha}_{i k}
			\mathring{h}^{\beta}_{j k} (J_{i \alpha} J_{j \beta} - J_{j \alpha} J_{i
				\beta}) \leqslant 2 \sqrt{\theta_2 \rho_1 \rho_2} + \theta_2 \rho_2 . \]
			Thus we obtain $S_1 \leqslant \frac{3}{n} S_2 + 3 \normho^2 + 8
			\sqrt{\theta_2 \rho_1 \rho_2} + 4 \theta_2 \rho_2$.
		\end{proof}

		Let $F : M \times [0, T) \rightarrow \mathbb{C}\mathbb{P}^{\frac{n +
				q}{2}}$ be a mean curvature flow with $2 \leqslant q \leqslant n - 6$. Suppose
		that the initial submanifold $M_0$ satisfies the pinching condition $\normho^2
		< k | H |^2 + l$, where $k = \frac{1}{n (n - 1)}, l = 2 - \frac{3}{n}$. Since
		$M_0$ is compact, there exists a small positive number $\varepsilon$, such
		that $M_0$ satisfies $\normho^2 < (k | H |^2 + l) (1 - \varepsilon)$.
		
		Now we prove that the pinching condition is preserved.
		
		\begin{theorem}
			\label{pinch}If $M_0$ satisfies $\normho^2 < (k | H |^2 + l) (1 -
			\varepsilon)$, then this condition holds for all time $t \in [0, T)$.
		\end{theorem}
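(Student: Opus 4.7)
The plan is to apply the scalar parabolic maximum principle to $U := \normho^2 - (1-\varepsilon)(k|H|^2 + l)$. By hypothesis $U|_{t=0} < 0$, so it suffices to rule out the first time $U$ can touch zero by showing $(\dt - \Delta)U < 0$ wherever $U = 0$.

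First I assemble the evolution of $U$ from Lemma \ref{evoinCP}. The combination $\dt|h|^2 - \tfrac{1}{n}\dt|H|^2$ is pleasant because $-2n|h|^2 + 4|H|^2 - 2|H|^2 = -2n\normho^2$ (all free $|H|^2$ contributions cancel), giving
\[ (\dt - \Delta)\normho^2 = -2|\nabla \mathring{h}|^2 - 2n\normho^2 + 2R_1 - \tfrac{2}{n}R_2 + 2S_1 - \tfrac{6}{n}S_2. \]
Subtracting $(1-\varepsilon)k(\dt - \Delta)|H|^2$ adds $2(1-\varepsilon)k|\nabla H|^2 - 2n(1-\varepsilon)k|H|^2 - 2(1-\varepsilon)kR_2 - 6(1-\varepsilon)kS_2$. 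Two structural features now emerge: (a) Lemma \ref{nqineq}(iii) eliminates the big $S_2$ term via $2S_1 - \tfrac{6}{n}S_2 \le 6\normho^2 + 16\sqrt{\theta_2\rho_1\rho_2} + 8\theta_2\rho_2$, and the residual $-6(1-\varepsilon)kS_2 = -6(1-\varepsilon)k|H|^2\theta_1$ is already non-positive; (b) Lemma \ref{dA2} yields $|\nabla\mathring{h}|^2 \ge \bigl(\tfrac{3}{n+8} - \tfrac{1}{n}\bigr)|\nabla H|^2 + 2(n-q)|P|^2$, and since $2 \le q \le n-6$ forces $n \ge 8$, the inequality $\tfrac{3}{n+8} - \tfrac{1}{n} = \tfrac{2n-8}{n(n+8)} > k = \tfrac{1}{n(n-1)}$ is strict, so for $\varepsilon$ small the total $|\nabla H|^2$ coefficient is non-positive and I retain the useful slack $-4(n-q)|P|^2 \le -4(n-q)\theta_2$.

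Next, I bound the remaining zero-order expression using Lemma \ref{nqineq}(i),(ii), substitute $|h|^2 = \normho^2 + \tfrac{|H|^2}{n}$, apply the identity $2|h|^4 - \tfrac{2}{n}|H|^2|h|^2 = 2|h|^2\normho^2$, and impose the pinching equality $\normho^2 = (1-\varepsilon)(k|H|^2 + l)$ valid at $U=0$. The mixed term $16\sqrt{\theta_2\rho_1\rho_2}$ is absorbed against the slack by Young's inequality: $16\sqrt{\theta_2\rho_1\rho_2} \le 4(n-q)\theta_2 + \tfrac{16\rho_1\rho_2}{n-q}$. What remains is a scalar inequality in $|H|^2$, $\rho_2 \in [0,\normho^2]$, and $\theta_1 \in [0,1]$. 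Extremizing over $\rho_2$ and $\theta_1$ (the dependence on each is quadratic with known sign) reduces everything to a one-variable polynomial inequality in $|H|^2$ whose non-positivity at $\varepsilon = 0$ is precisely what the choice $k = \tfrac{1}{n(n-1)}$, $l = 2 - \tfrac{3}{n}$ is tuned to enforce; continuity then gives strict negativity for $\varepsilon > 0$ small.

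The main obstacle is this final algebraic bookkeeping: the expression has several species of positive contributions (the quartic from $R_1$, the $\rho_2^2$ and $\rho_1\rho_2$ cross-terms, the $\theta_2$-terms from $S_1$, linear $|H|^2$ terms, and the $\varepsilon$-perturbations of the pinching identity), and these must cancel under the fine-tuned constants $k, l$. The step is standard in spirit but numerically tight; the dimensional hypothesis $q \le n-6$ is exactly what is needed for both the gradient inequality and the final algebraic inequality to have room. Once this scalar inequality is established, the strong parabolic maximum principle on the compact manifold $M$ immediately yields $U < 0$ throughout $[0,T)$, completing the preservation of the pinching.
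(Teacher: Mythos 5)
Your setup is the same as the paper's (same $U$, same evolution equation, Lemmas \ref{evoinCP}, \ref{nqineq}, \ref{dA2}, maximum principle at the first zero of $U$), but the decisive step --- verifying that the zero--order terms are nonpositive when $\normho^2=(1-\varepsilon)(k|H|^2+l)$ --- is only asserted, and the specific absorption scheme you sketch does not close. You spend the \emph{entire} gradient slack $4(n-q)|P|^2\geqslant 4(n-q)\theta_2$ on the Young estimate $16\sqrt{\theta_2\rho_1\rho_2}\leqslant 4(n-q)\theta_2+\tfrac{16}{n-q}\rho_1\rho_2$. After that substitution nothing is left to dominate the remaining term $8\theta_2\rho_2$ coming from Lemma \ref{nqineq} (iii): it must then be absorbed by $-2\left(n-\tfrac52\right)\rho_2^2$ and the constant $2(3-n+l_\varepsilon)l_\varepsilon$ alone, and this fails quantitatively. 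For instance with $n=12$, $q=6$, $\theta_2=q-1=5$, $\rho_1=0$, $\rho_2\approx 2$ (so $|H|^2$ moderate and the $\varepsilon$--terms negligible), your reduced expression is about $+49>0$, so the claimed ``one-variable polynomial inequality in $|H|^2$ that is nonpositive at $\varepsilon=0$'' is false for your bookkeeping. Note also that the extremization variables are $\rho_2$ and $\theta_2$ (not $\theta_1$, which only enters through the harmless $-6k_\varepsilon|H|^2\theta_1\leqslant 0$), and that the bound $\theta_2\leqslant q-1\leqslant n-7$ is indispensable and never invoked in your argument.

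The paper distributes the absorptions differently, and this is exactly what makes the numbers work: the cross term $8\sqrt{\theta_2\rho_1\rho_2}$ is absorbed against the co-located negative term $-(n-4)\rho_1\rho_2$ via (\ref{srP1P2}), producing only $\tfrac{16\theta_2}{n-4}$; the linear terms $(3-n+l_\varepsilon)l_\varepsilon+(n-2)l_\varepsilon\rho_2$ are converted into a $\rho_2^2$ term by (\ref{P22}); the quadratic in $\rho_2$ together with $4\theta_2\rho_2$ is then maximized and, using $\theta_2\leqslant q-1\leqslant n-7$, bounded by $8\theta_2$ in (\ref{P222}); and only at the very end is the full gradient slack from Lemma \ref{dA2} used to kill $k_\varepsilon|\nabla H|^2-\left|\nabla\mathring{h}\right|^2+8\theta_2\leqslant 0$. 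Your proposal could be repaired by redistributing in this way (or by splitting the Young inequality so as to keep part of the slack and then checking the resulting $\theta_2$--dependence against $\theta_2\leqslant q-1$), but as written the key inequality is neither proved nor provable by the route you describe. A smaller point: ``continuity gives strict negativity for $\varepsilon>0$'' is not a valid inference from a nonstrict inequality at $\varepsilon=0$; like the paper, you should either track the sign of the $\varepsilon$--terms explicitly (e.g.\ $(3-2n)k_\varepsilon+(k_\varepsilon+\tfrac1n)l_\varepsilon\leqslant 0$) or be content with nonpositivity on $\{U=0\}$ and the corresponding form of the maximum principle.
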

		
		\begin{proof}
			Let $k_{\varepsilon} = k (1 - \varepsilon), l_{\varepsilon} = l (1 -
			\varepsilon)$. We set $U = \normho^2 - k_{\varepsilon} | H |^2 -
			l_{\varepsilon}$. From the evolution equations we have
			\begin{eqnarray*}
				\frac{1}{2} \left( \dt - \Delta \right) U & = & k_{\varepsilon} | \nabla H
				|^2 - \left| \nabla \mathring{h} \right|^2 - n \left( \normho^2 +
				k_{\varepsilon} | H |^2 \right)\\
				&  & + R_1 - \left( k_{\varepsilon} + \tfrac{1}{n} \right) R_2 + S_1 - 3
				\left( k_{\varepsilon} + \tfrac{1}{n} \right) S_2 .
			\end{eqnarray*}
			Now we show that $\left( \dt - \Delta \right) U$ is negative at all points
			where $\normho^2 = k_{\varepsilon} | H |^2 + l_{\varepsilon}$.
			
			From Lemma \ref{nqineq}, if $\normho^2 = k_{\varepsilon} | H |^2 +
			l_{\varepsilon}$, we get the following estimates.
			\begin{eqnarray*}
				R_1 - \left( k_{\varepsilon} + \tfrac{1}{n} \right) R_2 & \leqslant &
				\left( \normho^2 - k_{\varepsilon} | H |^2 \right) | h |^2 - \left(
				\tfrac{1}{n} - k_{\varepsilon} \right) | H |^2 \rho_2 + 2 \rho_1 \rho_2 +
				\frac{1}{2} \rho_2^2\\
				& \leqslant & l_{\varepsilon} | h |^2 - (n - 2) k_{\varepsilon} | H |^2
				\rho_2 + 2 \rho_1 \rho_2 + \frac{1}{2} \rho_2^2\\
				& = & \left( k_{\varepsilon} + \tfrac{1}{n} \right) l_{\varepsilon} | H
				|^2 + l_{\varepsilon}^2 - (n - 2) (\rho_1 + \rho_2 - l_{\varepsilon})
				\rho_2 + 2 \rho_1 \rho_2 + \frac{1}{2} \rho_2^2\\
				& = & \left( k_{\varepsilon} + \tfrac{1}{n} \right) l_{\varepsilon} | H
				|^2 + l_{\varepsilon}^2 + (n - 2) l_{\varepsilon} \rho_2 - (n - 4) \rho_1
				\rho_2 - \left( n - \frac{5}{2} \right) \rho_2^2,
			\end{eqnarray*}
			and
			\[ S_1 - 3 \left( k_{\varepsilon} + \tfrac{1}{n} \right) S_2 \leqslant 3
			\normho^2 + 8 \sqrt{\theta_2 \rho_1 \rho_2} + 4 \theta_2 \rho_2 \leqslant
			3 k_{\varepsilon} | H |^2 + 3 l_{\varepsilon} + 8 \sqrt{\theta_2 \rho_1
				\rho_2} + 4 \theta_2 \rho_2 . \]
			Thus, at a point where $U = 0$, we get
			\begin{eqnarray}
			\frac{1}{2} \left( \dt - \Delta \right) U & \leqslant & k_{\varepsilon} |
			\nabla H |^2 - \left| \nabla \mathring{h} \right|^2 - n (2 k_{\varepsilon}
			| H |^2 + l_{\varepsilon}) \nonumber\\
			&  & + \left( k_{\varepsilon} + \tfrac{1}{n} \right) l_{\varepsilon} | H
			|^2 + l_{\varepsilon}^2 + (n - 2) l_{\varepsilon} \rho_2 - (n - 4) \rho_1
			\rho_2 - \left( n - \frac{5}{2} \right) \rho_2^2 \nonumber\\
			&  & + 3 k_{\varepsilon} | H |^2 + 3 l_{\varepsilon} + 8 \sqrt{\theta_2
				\rho_1 \rho_2} + 4 \theta_2 \rho_2  \label{paraU}\\
			& = & k_{\varepsilon} | \nabla H |^2 - \left| \nabla \mathring{h}
			\right|^2 + \left[ (3 - 2 n) k_{\varepsilon} + \left( k_{\varepsilon} +
			\tfrac{1}{n} \right) l_{\varepsilon} \right] | H |^2 \nonumber\\
			&  & + (3 - n + l_{\varepsilon}) l_{\varepsilon} + (n - 2)
			l_{\varepsilon} \rho_2 - \left( n - \frac{5}{2} \right) \rho_2^2
			\nonumber\\
			&  & - (n - 4) \rho_1 \rho_2 + 8 \sqrt{\theta_2 \rho_1 \rho_2} + 4
			\theta_2 \rho_2 . \nonumber
			\end{eqnarray}
			By the definition of $k_{\varepsilon}, l_{\varepsilon}$, we have
			\[ (3 - 2 n) k_{\varepsilon} + \left( k_{\varepsilon} + \tfrac{1}{n} \right)
			l_{\varepsilon} \leqslant 0. \]
			On the other hand, we have
			\begin{equation}
			- (n - 4) \rho_1 \rho_2 + 8 \sqrt{\theta_2 \rho_1 \rho_2} \leqslant
			\frac{16 \theta_2}{n - 4} \label{srP1P2}
			\end{equation}
			and
			\begin{equation}
			(3 - n + l_{\varepsilon}) l_{\varepsilon} + (n - 2) l_{\varepsilon} \rho_2
			\leqslant \frac{(n - 2)^2 l_{\varepsilon}}{4 (n - 3 - l_{\varepsilon})}
			\rho_2^2 \leqslant \frac{(n - 2)^2 l}{4 (n - 3 - l)} \rho_2^2 .
			\label{P22}
			\end{equation}
			Hence, the RHS of (\ref{paraU}) is not greater than
			\[ k_{\varepsilon} | \nabla H |^2 - \left| \nabla \mathring{h} \right|^2 +
			\left[ \frac{(n - 2)^2 l}{4 (n - 3 - l)} - \left( n - \frac{5}{2} \right)
			\right] \rho_2^2 + 4 \theta_2 \rho_2 + \frac{16 \theta_2}{n - 4} . \]
			Since $n \geqslant q + 6 \geqslant 8$, we have $\frac{(n - 2)^2 l}{4 (n - 3
				- l)} - \left( n - \frac{5}{2} \right) < 0$. Then we get
			\begin{eqnarray}
			&  & \left[ \frac{(n - 2)^2 l}{4 (n - 3 - l)} - \left( n - \frac{5}{2}
			\right) \right] \rho_2^2 + 4 \theta_2 \rho_2 + \frac{16 \theta_2}{n - 4}
			\nonumber\\
			& \leqslant & \frac{4}{n - \frac{5}{2} - \frac{(n - 2)^2 l}{4 (n - 3 -
					l)}} \theta_2^2 + \frac{16 \theta_2}{n - 4} \nonumber\\
			& \leqslant & \left[ \frac{4 (q - 1)}{n - \frac{5}{2} - \frac{(n - 2)^2
					l}{4 (n - 3 - l)}} + \frac{16}{n - 4} \right] \theta_2  \label{P222}\\
			& \leqslant & \left[ \frac{4 (n - 7)}{n - \frac{5}{2} - \frac{(n - 2)^2
					l}{4 (n - 3 - l)}} + \frac{16}{n - 4} \right] \theta_2 \nonumber\\
			& = & 8 \left[ 1 - \frac{- 60 + 76 n - 16 n^2 + n^3}{(n - 4) (- 18 + 42 n
				- 19 n^2 + 2 n^3)} \right] \theta_2 \leqslant 8 \theta_2 . \nonumber
			\end{eqnarray}
			By Lemma \ref{dA2}, we get
			\[ k_{\varepsilon} | \nabla H |^2 - \left| \nabla \mathring{h} \right|^2 + 8
			\theta_2 \leqslant 0. \]
			Therefore, the RHS of (\ref{paraU}) is nonpositive for small $\varepsilon$.
			Then the assertion follows from the maximum principle.
		\end{proof}
		
		Let $f_{\sigma} = \normho^2 / (k | H |^2 + l)^{1 - \sigma}$, where $\sigma \in
		(0, 1)$ is a positive constant. Then we have
		
		\begin{lemma}
			\label{ptf2}If $M_0$ satisfies $\normho^2 < (k | H |^2 + l) (1 -
			\varepsilon)$, then the following inequality holds along the mean curvature
			flow.
			\[ \dt f_{\sigma} \leqslant \Delta f_{\sigma} + \frac{2 k | \nabla
				f_{\sigma} | | \nabla | H |^2 |}{k | H |^2 + l} - \frac{2 f_{\sigma}}{3
				\normho^2} \left| \nabla \mathring{h} \right|^2 + 2 \sigma f_{\sigma} (|
			h |^2 + n) - 2 \varepsilon l f_{\sigma} . \]
		\end{lemma}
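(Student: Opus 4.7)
The plan is to mirror the structure of Lemma \ref{ptf1}, substituting $G := k|H|^2 + l$ for the role played there by $\mathring{\varphi}_{\varepsilon}$ and invoking Theorem \ref{pinch} in place of Theorem \ref{preh}. Writing $f_\sigma = \normho^2 \cdot G^{\sigma - 1}$ and differentiating, one obtains the standard identity
\[
\left(\dt - \Delta\right) f_\sigma
= f_\sigma \left[\frac{(\partial_t - \Delta)\normho^2}{\normho^2} - \frac{(1-\sigma)(\partial_t - \Delta) G}{G}\right]
+ \frac{2(1-\sigma)}{G}\langle \nabla f_\sigma, \nabla G\rangle
- \sigma(1-\sigma) f_\sigma \frac{|\nabla G|^2}{G^2}.
\]
Since $\normho^2 = |h|^2 - \tfrac{1}{n}|H|^2$, combining the two evolution equations of Lemma \ref{evoinCP} gives
$(\partial_t-\Delta)\normho^2 = -2|\nabla\mathring{h}|^2 - 2n\normho^2 + 2R_1 - \tfrac{2}{n}R_2 + 2S_1 - \tfrac{6}{n}S_2$,
while $(\partial_t-\Delta)G = k(-2|\nabla H|^2 + 2n|H|^2 + 2R_2 + 6S_2)$. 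I would substitute these and discard the manifestly negative last term $-\sigma(1-\sigma)f_\sigma|\nabla G|^2/G^2$.

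Next I would handle the gradient contribution $-2f_\sigma|\nabla\mathring{h}|^2/\normho^2 + 2(1-\sigma) f_\sigma k|\nabla H|^2/G$. I would reserve $-\tfrac{2}{3}f_\sigma|\nabla\mathring{h}|^2/\normho^2$ for the final estimate, leaving $-\tfrac{4}{3}f_\sigma|\nabla\mathring{h}|^2/\normho^2 + 2f_\sigma k|\nabla H|^2/G$, and use Lemma \ref{dA2} together with Theorem \ref{pinch} (which gives $\normho^2 \leq (1-\varepsilon)G$) to show this combination is nonpositive; this relies on the inequality $3(n+8) \leq 4(n-1)(n-4)$, valid because $q < n-4$ forces $n \geq 7$. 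The remaining first-order term is written, via Cauchy--Schwarz, as $\leq 2k|\nabla f_\sigma||\nabla|H|^2|/G$, matching the stated inequality.

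For the algebraic reaction term, I would imitate the estimate already carried out in the proof of Theorem \ref{pinch}: apply Lemma \ref{nqineq} to bound $R_1 - (k+\tfrac{1}{n})R_2$ and $S_1 - 3(k+\tfrac{1}{n})S_2$, then use the inequalities (\ref{srP1P2})--(\ref{P222}) to absorb the $\rho_2$, $\sqrt{\theta_2\rho_1\rho_2}$ and $\theta_2$ contributions. The structural identity
\[
\tfrac{1}{\normho^2}(R_1 - \tfrac{1}{n}R_2) - \tfrac{(1-\sigma)k}{G}R_2
= \tfrac{1}{\normho^2}\bigl[R_1 - (k+\tfrac{1}{n})R_2\bigr] + \tfrac{R_2}{\normho^2}\Bigl[k - \tfrac{(1-\sigma) k \normho^2}{G}\Bigr]
\]
(and its analogue for $S_1,S_2$) allows the $\sigma$-part to be peeled off. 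Using $R_2 \leq |H|^2 |h|^2$ and the pinching $\normho^2 \leq G$, the leftover $\sigma$-terms contribute at most $2\sigma f_\sigma(|h|^2 + n)$, exactly as needed; and because we use the \emph{strict} pinching $\normho^2 \leq (1-\varepsilon)G$ from Theorem \ref{pinch}, the same computation that produced the negative right-hand side of (\ref{paraU}) at $U=0$ now produces, after dividing by $\normho^2$ and multiplying by $f_\sigma$, an extra negative contribution of order $-2\varepsilon l f_\sigma$.

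The main obstacle is the careful bookkeeping in the third step: one must simultaneously (a) extract the clean factor $2\sigma f_\sigma(|h|^2+n)$, (b) retain enough negativity from the pinching margin $\varepsilon$ to produce $-2\varepsilon l f_\sigma$, and (c) ensure every $\rho_2$, $\theta_2$, $\sqrt{\theta_2 \rho_1 \rho_2}$ term can be absorbed exactly as in (\ref{srP1P2})--(\ref{P222}), which is where the dimension hypothesis $q<n-4$ and the numerology $k=\tfrac{1}{n(n-1)}$, $l=2-\tfrac{3}{n}$ are used in an essential way. Once these three accountings are done in parallel, the stated inequality follows directly.
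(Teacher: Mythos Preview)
Your overall strategy matches the paper's, but there is one concrete gap in the bookkeeping. The inequalities (\ref{srP1P2})--(\ref{P222}) do \emph{not} fully absorb the $\theta_2$ contribution: the chain of estimates in (\ref{P222}) terminates at $8\theta_2$, not at zero. In the proof of Theorem \ref{pinch} this residual $8\theta_2$ is disposed of by the gradient inequality $k_\varepsilon|\nabla H|^2 - |\nabla\mathring{h}|^2 + 8\theta_2 \leq 0$, which uses the \emph{full} Lemma \ref{dA2}, i.e.\ the term $2(n-q)|P|^2 \geq 2(n-q)\theta_2$ in addition to the $|\nabla H|^2$ bound. Your gradient step, however, invokes only the $|\nabla H|^2$ part of Lemma \ref{dA2} (that is what the numerical check $3(n+8)\le 4(n-1)(n-4)$ is for) and therefore has no mechanism to kill the leftover $\tfrac{16\theta_2}{\normho^2}f_\sigma$ coming from $S_1$.

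The fix is exactly what the paper does: carry $8\theta_2$ into the gradient bracket and show
\[
-|\nabla\mathring{h}|^2 + k|\nabla H|^2 + 8\theta_2 \;\le\; -\tfrac{1}{3}|\nabla\mathring{h}|^2,
\]
which follows from Lemma \ref{dA2} since $\tfrac{2}{3}\cdot 2(n-q)|P|^2 \ge \tfrac{4}{3}\cdot 6\,\theta_2 = 8\theta_2$ (using $n-q\ge 6$ and $|P|^2\ge\theta_2$) together with your own numerical check for the $|\nabla H|^2$ coefficient. With that adjustment your outline goes through and coincides with the paper's argument.
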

		
		\begin{proof}
			By a direct calculation, we have
			\begin{eqnarray*}
				\left( \dt - \Delta \right) f_{\sigma} & = & f_{\sigma} \left[
				\frac{1}{\normho^2} \left( \dt - \Delta \right) \normho^2 - \frac{1 -
					\sigma}{| H |^2 + l / k} \left( \dt - \Delta \right) | H |^2 \right]\\
				&  & + 2 (1 - \sigma) \frac{\langle \nabla f_{\sigma}, \nabla | H |^2
					\rangle}{| H |^2 + l / k} - \sigma (1 - \sigma) f_{\sigma}  \frac{| \nabla
					| H |^2 |^2}{| H |^2 + l / k} .
			\end{eqnarray*}
			Now we estimate $(\partial_t - \Delta) \normho^2$ and $(\partial_t - \Delta)
			| H |^2$. From (\ref{srP1P2}), (\ref{P222}) and Lemma \ref{nqineq}, we get
			\begin{eqnarray}
			\left( \dt - \Delta \right) \normho^2 & \leqslant & - 2 \left| \nabla
			\mathring{h} \right|^2 + 2 \normho^2 (| h |^2 - n + 3) - \frac{2}{n}
			\rho_2 | H |^2 \nonumber\\
			&  & + 4 \rho_1 \rho_2 + \rho_2^2 + 16 \sqrt{\theta_2 \rho_1 \rho_2} + 8
			\theta_2 \rho_2 \nonumber\\
			& \leqslant & - 2 \left| \nabla \mathring{h} \right|^2 + 2 \normho^2 (| h
			|^2 - n + 3) - \frac{2}{n} \rho_2 | H |^2 \nonumber\\
			&  & + 2 (n - 2) \rho_1 \rho_2 + \rho_2^2 + \frac{32 \theta_2}{n - 4} + 8
			\theta_2 \rho_2  \label{parahohic2}\\
			& = & - 2 \left| \nabla \mathring{h} \right|^2 + 2 \normho^2 (| h |^2 - n
			+ 3 + (n - 2) \rho_2) - \frac{2}{n} \rho_2 | H |^2 \nonumber\\
			&  & + (5 - 2 n) \rho_2^2 + \frac{32 \theta_2}{n - 4} + 8 \theta_2 \rho_2
			\nonumber\\
			& \leqslant & - 2 \left| \nabla \mathring{h} \right|^2 + 2 \normho^2 (| h
			|^2 - n + 3 + (n - 2) \rho_2) - \frac{2}{n} \rho_2 | H |^2 \nonumber\\
			&  & + 16 \theta_2 - \frac{(n - 2)^2 l}{2 (n - 3 - l)} \rho_2^2 .
			\nonumber
			\end{eqnarray}
			We also have
			\begin{eqnarray*}
				&  & - \frac{1 - \sigma}{| H |^2 + l / k} \left( \dt - \Delta \right) | H
				|^2\\
				& \leqslant & \frac{2 | \nabla H |^2}{| H |^2 + l / k} - \frac{2 (1 -
					\sigma) | H |^2}{| H |^2 + l / k} ( | h |^2 - \rho_2 + n)\\
				& \leqslant & \frac{2 k | \nabla H |^2}{\normho^2} - \left( 2 - \frac{2
					l}{k | H |^2 + l} \right) ( | h |^2 - \rho_2 + n) + 2 \sigma (| h |^2 +
				n)\\
				& = & \frac{2 k | \nabla H |^2}{\normho^2} + \frac{2 l (| h |^2 - \rho_2
					+ n)}{k | H |^2 + l} - 2 (| h |^2 - \rho_2 + n) + 2 \sigma (| h |^2 + n)\\
				& \leqslant & \frac{2 k | \nabla H |^2}{\normho^2} + \frac{2 l \left[ (1
					- \varepsilon) (k | H |^2 + l) + \frac{1}{n} | H |^2 - \rho_2 + n
					\right]}{k | H |^2 + l}\\
				&  & - 2 (| h |^2 - \rho_2 + n) + 2 \sigma (| h |^2 + n)\\
				& \leqslant & \frac{2 k | \nabla H |^2}{\normho^2} + \frac{2 l \left(
					\frac{1}{n} | H |^2 - \rho_2 + n \right)}{k | H |^2 + l} + 2 l (1 -
				\varepsilon) - 2 (| h |^2 - \rho_2 + n) + 2 \sigma (| h |^2 + n) .
			\end{eqnarray*}
			Then we get
			\begin{eqnarray*}
				&  & \left( \dt - \Delta \right) f_{\sigma}\\
				& \leqslant & f_{\sigma} \left[ \frac{1}{\normho^2} \left( \dt - \Delta
				\right) \normho^2 - \frac{1 - \sigma}{| H |^2 + l / k} \left( \dt - \Delta
				\right) | H |^2 \right] + \frac{2 | \nabla f_{\sigma} | | \nabla | H |^2
					|}{| H |^2 + l / k}\\
				& \leqslant & \frac{2 f_{\sigma}}{\normho^2} \left[ - \left| \nabla
				\mathring{h} \right|^2 + 8 \theta_2 + k | \nabla H |^2 \right] + \frac{2 |
					\nabla f_{\sigma} | | \nabla | H |^2 |}{| H |^2 + l / k}\\
				&  & + 2 f_{\sigma} [- 2 n + 3 + (n - 1) \rho_2 + l (1 - \varepsilon) +
				\sigma (| h |^2 + n)]\\
				&  & + 2 f_{\sigma} \left[ - \frac{1}{\normho} \left[ \tfrac{1}{n} \rho_2
				| H |^2 - \frac{(n - 2)^2 l}{4 (n - 3 - l)} \rho_2^2 \right] + \frac{l
					\left( \frac{1}{n} | H |^2 - \rho_2 + n \right)}{k | H |^2 + l} \right]\\
				& \leqslant & \frac{2 f_{\sigma}}{\normho^2} \left[ - \left| \nabla
				\mathring{h} \right|^2 + 8 \theta_2 + k | \nabla H |^2 \right] + \frac{2 |
					\nabla f_{\sigma} | | \nabla | H |^2 |}{| H |^2 + l / k}\\
				&  & + 2 f_{\sigma} [(n - 1) (\rho_2 - l) + \sigma (| h |^2 + n) -
				\varepsilon l]\\
				&  & + \frac{2 f_{\sigma}}{k | H |^2 + l} \left[ - \tfrac{1}{n} \rho_2 |
				H |^2 - \frac{(n - 2)^2 l}{4 (n - 3 - l)} \rho_2^2 + l \left( \tfrac{1}{n}
				| H |^2 - \rho_2 + n \right) \right] .
			\end{eqnarray*}
			By Lemma \ref{dA2} and the condition $n \geqslant q + 6 \geqslant 8$, we have
			\[ - \left| \nabla \mathring{h} \right|^2 + 8 \theta_2 + k | \nabla H |^2
			\leqslant - \frac{1}{3} \left| \nabla \mathring{h} \right|^2 . \]
			Using (\ref{P22}), we get
			\begin{eqnarray*}
				&  & - \tfrac{1}{n} \rho_2 | H |^2 - \frac{(n - 2)^2 l}{4 (n - 3 - l)}
				\rho_2^2 + l \left( \tfrac{1}{n} | H |^2 - \rho_2 + n \right)\\
				& \leqslant & - \tfrac{1}{n} \rho_2 | H |^2 - (3 - n + l) l - (n - 2) l
				\rho_2 + l \left( \tfrac{1}{n} | H |^2 - \rho_2 + n \right)\\
				& = & (n - 1) (l - \rho_2) (k | H |^2 + l) .
			\end{eqnarray*}
			Therefore, we complete the proof of this lemma.
		\end{proof}
		
		\subsection{The case of $q \geqslant n - 4$}\
		
		We choose an orthonormal frame $\{ e_{\alpha} \}$ for the normal space such
		that $H = | H | e_{n + 1}$. Let $\rho_1 = \left| \mathring{h}^{n + 1}
		\right|^2$, $\rho_2 = \sum_{\alpha > n + 1} \left| \mathring{h}^{\alpha}
		\right|^2$. We have
		
		\begin{lemma}
			\label{nqineq2}{\tmdummy}
			
			\begin{enumerateroman}
				\item  $R_1 \leqslant | h |^4 - \frac{2}{n} \rho_2 | H |^2 + 2 \normho^2
				\rho_2 - \frac{3}{2} \rho_2^2$,
				
				\item $R_2 = | H |^2 (| h |^2 - \rho_2)$,
				
				\item $S_1 \leqslant \frac{3}{n} S_2 + (2 n + 3) \normho^2$.
			\end{enumerateroman}
		\end{lemma}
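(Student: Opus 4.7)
Parts (i) and (ii) should go through exactly as in Lemma \ref{nqineq}(i) and (ii), since the arguments there never invoke the codimension restriction $q<n-4$. Indeed, (ii) is a direct computation once we fix $H=|H|e_{n+1}$, and (i) follows from Cauchy--Schwarz and the Li--Li inequality of \cite{MR1161925}; the bound stated in (i) is only a rewriting of the one in Lemma \ref{nqineq}(i), via $\normho^2=\rho_1+\rho_2$, so that $2\rho_1\rho_2+\tfrac{1}{2}\rho_2^2 = 2\normho^2\rho_2-\tfrac{3}{2}\rho_2^2$.

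For part (iii), the plan is to mimic the $S_1$ decomposition of Lemma \ref{nqineq}. Writing $h=\mathring h+\frac{1}{n}H\otimes g$, a direct calculation gives the first ``$3\sum$'' term equal to $\tfrac{3}{n}S_2+3|P\mathring h|^2$. The $6$- and $8$-terms, handled by diagonalizing $(J_{ij})$ in the canonical antisymmetric form (\ref{antisym}) and completing squares as in Lemma \ref{nqineq}, are bounded by $\tfrac{4}{3}(\normho^2-|P\mathring h|^2)$. The genuinely new ingredient lies in the $4$-term. For each fixed $(\alpha,\beta)$ the frame-based bound $\sum_{i,j,k}\mathring h^\alpha_{ik}\mathring h^\beta_{jk}(J_{i\alpha}J_{j\beta}-J_{j\alpha}J_{i\beta})\leqslant|\mathring h^\alpha||\mathring h^\beta||Pe_\alpha||Pe_\beta|$ of Lemma \ref{nqineq} still applies; what must change is the summation over $(\alpha,\beta)$, since the bounds $\theta_1\leqslant1$, $\theta_2\leqslant q-1$ are wasteful when $q$ is large. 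Instead, we apply Cauchy--Schwarz to pass to the uniform quantity $|P|^2=n-\sum_{i,j}(J_{ij})^2\leqslant n$, and then use $|P\mathring h|^2\leqslant\normho^2$ to assemble the final estimate.

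The main obstacle will be producing the sharp constant $(2n+3)$: a naive combination of the ingredients above easily yields a bound of the form $\tfrac{3}{n}S_2+(4n+c)\normho^2$, so shaving the leading coefficient down to $2n$ will require retaining the $|P\mathring h|^2$-dependence of each sub-estimate as long as possible, and exploiting the cancellation between the positive $|P\mathring h|^2$-contribution from the first ``$3\sum$'' term and the negative one in $\tfrac{4}{3}(\normho^2-|P\mathring h|^2)$, together with a careful treatment of the $4$-term in which one avoids bounding $|P\mathring h|^2$ by $\normho^2$ until the very last step.
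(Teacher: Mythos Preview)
Your treatment of (i) and (ii) is correct and matches the paper exactly: both follow verbatim from Lemma~\ref{nqineq}, with (i) just the algebraic rewriting you describe.

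For (iii), however, there is a genuine gap. The per-$(\alpha,\beta)$ estimate
\[
\sum_{i,j,k}\mathring{h}^{\alpha}_{ik}\mathring{h}^{\beta}_{jk}(J_{i\alpha}J_{j\beta}-J_{j\alpha}J_{i\beta})\leqslant |\mathring{h}^{\alpha}|\,|\mathring{h}^{\beta}|\,|Pe_{\alpha}|\,|Pe_{\beta}|
\]
that you propose to reuse, followed by Cauchy--Schwarz against $|P|^{2}\leqslant n$, bounds the $4$-term by $4n\,\normho^{2}$, and this is essentially sharp for that route: the right-hand side has no $|P\mathring{h}|^{2}$ in it, so the ``cancellation'' you hope to exploit is unavailable. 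Retaining $|P\mathring{h}|^{2}$ in the other two sub-estimates gives
\[
S_{1}\leqslant \tfrac{3}{n}S_{2}+\tfrac{4}{3}\normho^{2}+\tfrac{5}{3}|P\mathring{h}|^{2}+4|P|^{2}\normho^{2},
\]
and since the $|P\mathring{h}|^{2}$ coefficient is \emph{positive}, the best you can do is $|P\mathring{h}|^{2}\leqslant\normho^{2}$ and $|P|^{2}\leqslant n$, landing at $(4n+3)\normho^{2}$. No bookkeeping with these ingredients reaches $(2n+3)$.

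The paper abandons the per-$(\alpha,\beta)$ bound entirely and instead estimates the $4$-term by a tensor trick. With $v_{k}=\sum_{i,\alpha}\mathring{h}^{\alpha}_{ik}J_{i\alpha}$ and $\eta=\sqrt{n^{2}+n-2}$, set
\[
D_{ijk}=\sum_{\alpha}\Bigl(\mathring{h}^{\alpha}_{ij}J_{k\alpha}+\tfrac{\mathring{h}^{\alpha}_{ik}J_{j\alpha}+\mathring{h}^{\alpha}_{jk}J_{i\alpha}}{n+\eta}\Bigr),\qquad
E_{ijk}=-\tfrac{2\delta_{ij}v_{k}}{(n+\eta)\eta}+\tfrac{\delta_{ik}v_{j}+\delta_{jk}v_{i}}{\eta}.
\]
One checks $\langle D-E,E\rangle=0$, hence $|D|^{2}\geqslant|E|^{2}$; expanding both sides yields
\[
\sum_{\alpha,\beta,i,j,k}\mathring{h}^{\alpha}_{ik}\mathring{h}^{\beta}_{jk}(J_{i\alpha}J_{j\beta}-J_{i\beta}J_{j\alpha})\leqslant \tfrac{n}{2}\sum_{i,j,k}\Bigl(\sum_{\alpha}\mathring{h}^{\alpha}_{ij}J_{k\alpha}\Bigr)^{2}=\tfrac{n}{2}|P\mathring{h}|^{2}\leqslant \tfrac{n}{2}\normho^{2}.
\]
Combined with the $3\normho^{2}$ coming from (\ref{s2nP}) and (\ref{hobik}) as in Lemma~\ref{nqineq}, this gives exactly $S_{1}\leqslant\tfrac{3}{n}S_{2}+(2n+3)\normho^{2}$. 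The factor $\tfrac{n}{2}$ (rather than $n$) is what your Cauchy--Schwarz route cannot produce; it comes from the specific algebraic identity encoded in the $D$--$E$ construction.
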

		
		\begin{proof}
			Using the same proof as Lemma \ref{nqineq}, we obtain (i) and (ii).
			
			Now we re-estimate $S_1$. From (\ref{s2nP}) and (\ref{hobik}), we have
			\[ S_1 \leqslant \frac{3}{n} S_2 + 3 \normho^2 + 4 \sum_{\alpha, \beta, i,
				j, k} \mathring{h}^{\alpha}_{i k} \mathring{h}^{\beta}_{j k} (J_{i
				\alpha} J_{j \beta} - J_{i \beta} J_{j \alpha}) . \]
			With a local orthonormal frame, let $v$ be a vector given by $v_k =
			\mathring{h}^{\alpha}_{i k} J_{i \alpha}$. We define two tensors $D$ and $E$
			by
			\[ D_{i j k} = \mathring{h}^{\alpha}_{i j} J_{k \alpha} +
			\frac{\mathring{h}^{\alpha}_{i k} J_{j \alpha} + \mathring{h}^{\alpha}_{j
					k} J_{i \alpha}}{n + \eta}, \qquad E_{i j k} = - \frac{2 \delta_{i j}
				v_k}{(n + \eta) \eta} + \frac{\delta_{i k} v_j + \delta_{j k} v_i}{\eta},
			\]
			where $\eta = \sqrt{n^2 + n - 2} .$ Then we have $\langle D - E, E \rangle =
			0$. This implies $| D |^2 \geqslant | E |^2$. By the definitions of $D$ and
			$E$, we get
			\[ | D |^2 = \frac{5 n - 2 - 4 \eta}{(n - 2)^2} \left( n
			\mathring{h}^{\alpha}_{i j} \mathring{h}^{\beta}_{i j} J_{k \alpha} J_{k
				\beta} + 2 \mathring{h}^{\alpha}_{i k} \mathring{h}^{\beta}_{j k} J_{i
				\beta} J_{j \alpha} \right), \]
			\[ | E |^2 = \frac{2 (5 n - 2 - 4 \eta)}{(n - 2)^2} \mathring{h}^{\alpha}_{i
				k} \mathring{h}^{\beta}_{j k} J_{i \alpha} J_{j \beta} . \]
			Thus we obtain
			\[ \sum_{\alpha, \beta, i, j, k} \mathring{h}^{\alpha}_{i k}
			\mathring{h}^{\beta}_{j k} (J_{i \alpha} J_{j \beta} - J_{i \beta} J_{j
				\alpha}) \leqslant \frac{n}{2} \sum_{i, j, k} \left( \sum_{\alpha}
			\mathring{h}^{\alpha}_{i j} J_{k \alpha} \right)^2 \leqslant \frac{n}{2}
			\normho^2 . \]
			
		\end{proof}
		
		Define a function $\psi : [0, + \infty) \rightarrow \mathbb{R}$ by
		\begin{equation}\label{func2}
		\psi (x) := \tfrac{9}{n^2 - 3 n - 3} + \tfrac{n (n - 3)}{n^3 - 4 n^2 +
			3} x - \tfrac{3 \sqrt{x^2 + \frac{2}{n} (n - 1) (n^2 - 3) x + 9 (n -
				1)^2}}{n^3 - 4 n^2 + 3} .
		\end{equation}
		Let $\mathring{\psi} (x) = \psi (x) - \frac{x}{n}$. From Lemma \ref{psi} in
		the Appendix, the function $\mathring{\psi}$ has the following properties.
		
		\begin{lemma}
			\label{psio}The function $\mathring{\psi}$ satisfies
			\begin{enumerateroman}
				\item $0 \leqslant \mathring{\psi}' (x) < \frac{1}{n (n - 1)}$, $0
				\leqslant \mathring{\psi} (x) \leqslant \frac{x}{n (n - 1)}$,
				
				\item $\max_{x \geqslant 0} \left( 2 x \mathring{\psi}'' (x) +
				\mathring{\psi}' (x) \right) < \frac{2 (n - 4)}{n (n + 8)}$,
				
				\item $3 \mathring{\psi} (x) + \left( \mathring{\psi} (x) - x
				\mathring{\psi}' (x) \right) \left( \mathring{\psi} (x) + \frac{x}{n} + n
				\right) \leqslant 0$, and the equalities hold if and only $x = 0$,
				
				\item $0 \leqslant x \mathring{\psi}' (x) - \mathring{\psi} (x) < 2$.
			\end{enumerateroman}
		\end{lemma}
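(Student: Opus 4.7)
The plan is to deduce all four properties from a single clean rationalisation of $\mathring\psi(x) = \psi(x) - x/n$. Write $\psi(x) = A + Bx - C\sqrt{Q(x)}$ with $A = \frac{9}{n^2-3n-3}$, $B = \frac{n(n-3)}{(n-1)(n^2-3n-3)}$, $C = \frac{3}{(n-1)(n^2-3n-3)}$, and $Q(x) = x^2 + Dx + E$ where $D = \frac{2(n-1)(n^2-3)}{n}$ and $E = 9(n-1)^2$; set $P(x) = A + (B - 1/n)x$, so $\mathring\psi = P - C\sqrt{Q}$. Then
\[
\mathring\psi(x) = \frac{P(x)^2 - C^2 Q(x)}{P(x) + C\sqrt{Q(x)}},
\]
and direct substitution of the constants yields the two crucial coincidences $A^2 = C^2 E$ and $2A(B - 1/n) = C^2 D$. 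These kill the constant and linear parts of $P^2 - C^2Q$, leaving the pure quadratic $\frac{(n^2+3n-3)\,x^2}{n^2(n-1)^2(n^2-3n-3)}$, so
\[
\mathring\psi(x) = \frac{(n^2+3n-3)\, x^2}{n(n-1)\bigl[\,9n(n-1) + (n^2-3)x + 3n\sqrt{Q(x)}\,\bigr]}. \tag{$\star$}
\]

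With $(\star)$ in hand, claims (i), (ii), (iv) are routine. Positivity in (i) is immediate, and the bound $\mathring\psi(x) \le x/(n(n-1))$ reduces via cross-multiplication to $x - 3(n-1) \le \sqrt{Q(x)}$, which follows from the identity $Q(x) - (x-3(n-1))^2 = \bigl(D + 6(n-1)\bigr)x \ge 0$; the matching bound on $\mathring\psi'$ drops out of the analogous closed form $\mathring\psi'(x) = (B-1/n) - \frac{C(2x+D)}{2\sqrt{Q(x)}}$. For (ii) and (iv) I would differentiate $(\star)$, clear denominators so that the inequalities become polynomial in $x$ and $\sqrt{Q(x)}$, and verify by examining the endpoints. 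In particular for (iv), the function $f(x) := x\mathring\psi'(x) - \mathring\psi(x)$ satisfies $f(0) = 0$, $f'(x) = x\mathring\psi''(x) \ge 0$ (monotone from $(\star)$), and an asymptotic expansion of $(\star)$ at $x \to \infty$ gives the limit $3/n < 2$.

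The decisive step is (iii), which encodes precisely the calibration of the constants in $\psi$. Since $\mathring\psi - x\mathring\psi' = \psi - x\psi'$, the inequality reads
\[
3(\psi - x/n) + (\psi - x\psi')(\psi + n) \le 0,
\]
with equality forced at $x = 0$. My plan is to treat $\psi$ as the smaller root of the defining quadratic $(\psi - A - Bx)^2 = C^2 Q(x)$; implicit differentiation gives $(\psi' - B)(\psi - A - Bx) = C^2(x + D/2)$, and substituting $\psi - A - Bx = -C\sqrt{Q}$ lets me rewrite $\psi - x\psi'$ as a single rational expression in $x$ and $\sqrt{Q(x)}$. After multiplying the target inequality through by $2\sqrt{Q(x)}$, the expression splits as $\alpha(x) + \beta(x)\sqrt{Q(x)}$; squaring to eliminate the square root (tracking the sign, since $\psi$ is the lower root) should reduce (iii) to an explicit polynomial inequality in $x$ whose unique zero at $x = 0$ pins down the coefficients $\frac{9}{n^2-3n-3}$ and $\frac{n(n-3)}{n^3-4n^2+3}$. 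The hard part will be organising this elimination so that the cancellations are transparent and the residual polynomial manifestly has the correct sign; I expect the critical case to be the boundary curve $|h|^2 = \psi(|H|^2)$, which is exactly where (iii) saturates modulo the $3\mathring\psi$ term that forces strict inequality for $x > 0$.
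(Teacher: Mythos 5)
Your rationalisation $(\star)$ is correct (the two coincidences $A^2=C^2E$ and $2A(B-\tfrac1n)=C^2D$ do hold; they are equivalent to $\mathring{\psi}(0)=\mathring{\psi}'(0)=0$), and it gives clean proofs of (i) and of (iv) — in fact your route to (iv), namely $f(0)=0$, $f'(x)=x\mathring{\psi}''(x)\geqslant 0$ and $\lim_{x\to\infty}\bigl(x\mathring{\psi}'(x)-\mathring{\psi}(x)\bigr)=\mu-\nu=\tfrac3n$, is tidier than the paper's, which gets $x\psi'-\psi\geqslant 0$ as a corollary of (i) and (iii) and bounds it above by $\mu$. The paper itself works with $\psi=\nu+\kappa x-\sqrt{\lambda^2x^2+2\lambda\mu x+\nu^2}$ and proves (i) by computing $\psi'(0)=\tfrac1n$, $\psi''>0$, $\lim\psi'=\tfrac1{n-1}$, so up to this point your argument is an equivalent repackaging.

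There are, however, two genuine gaps. First, your plan for (ii), ``clear denominators and verify by examining the endpoints,'' fails: $g(x)=2x\mathring{\psi}''(x)+\mathring{\psi}'(x)$ attains its maximum at the \emph{interior} point where $\lambda x=\nu$, i.e.\ $x=3(n-1)$ (the paper computes $g'(x)=\tfrac{3\lambda^2(\mu^2-\nu^2)(\nu^2-\lambda^2x^2)}{(\lambda^2x^2+2\lambda\mu x+\nu^2)^{5/2}}$), and this maximum, $\kappa-\tfrac1n-\lambda\sqrt{\tfrac{2\nu}{\mu+\nu}}$, is strictly larger than both endpoint values $g(0)=0$ and $\lim_{x\to\infty}g=\tfrac1{n(n-1)}$. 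Moreover the comparison with $\tfrac{2(n-4)}{n(n+8)}$ genuinely needs the square-root correction: at $n=6$ one has $\kappa-\tfrac1n\approx 0.073>\tfrac{2(n-4)}{n(n+8)}\approx 0.048$, while the true maximum is $\approx 0.040$, so no estimate that ignores the critical-point value can succeed; you must locate the interior maximum and verify $\kappa-\lambda\sqrt{2\nu/(\mu+\nu)}<\tfrac3{n+8}$ as the paper does. Second, for (iii) your outline coincides with the paper's method (rewrite $\psi-x\psi'$ rationally, clear $\sqrt{Q}$, square with attention to signs), but the decisive computation is left undone, and it is precisely where all the content lies: in the paper's notation, with $A=\kappa\mu+3\lambda+\lambda\nu$, $B=n+3+2\nu$, $C=\kappa\nu^2+\lambda\mu(n+6+3\nu)$, one needs the identities $\lambda\mu+3\kappa+\kappa\nu-\tfrac3n=A$ and $\mu A+\lambda\nu B=\nu A+\lambda\mu B=C$ (which kill the cubic and linear terms of the squared difference) together with the fact that the surviving quadratic coefficient equals $-\tfrac{81(n^3-12n+9)^2}{n^2(n-1)^2(n^2-3n-3)^4}<0$; also, to deduce the original inequality from the squared one you need the positivity of $\lambda A x^2+Cx+\nu^2B$ for $x\geqslant 0$. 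Your proposal anticipates that such cancellations exist but does not establish them, so as written (iii) is a plan rather than a proof.
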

		
		For convenience, we denote $\mathring{\psi} (| H |^2)$, $\mathring{\psi}' (| H
		|^2)$, $\mathring{\psi}'' (| H |^2)$ by $\mathring{\psi}$, $\mathring{\psi}'$,
		$\mathring{\psi}''$, respectively. Let $F : M^n \times [0, T) \rightarrow
		\mathbb{C}\mathbb{P}^{\frac{n + q}{2}}$ $(q \geqslant n - 4 \geqslant 2)$ be
		a mean curvature flow whose initial value $M_0$ satisfies $| h |^2 < \psi(|H|^2)$.
		Since $M_0$ is compact, there exists a small positive number $\varepsilon$,
		such that $M_0$ satisfies $\normho^2 < \mathring{\psi} - \varepsilon | H |^2 -
		\varepsilon$.
		
		\begin{theorem}
			\label{prehico}If the initial value $M_0$ satisfies $\normho^2 <
			\mathring{\psi} - \varepsilon | H |^2 - \varepsilon$, then this pinching
			condition holds for all $t \in [0, T)$.
		\end{theorem}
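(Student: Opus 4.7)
The plan is to apply the scalar maximum principle to the function $U := \normho^2 - \mathring{\psi}(|H|^2) + \varepsilon |H|^2 + \varepsilon$, which by hypothesis is strictly negative at $t=0$. Mirroring the pattern of Theorem \ref{preh} and \ref{pinch}, I aim to show that $(\partial_t - \Delta)U < 0$ pointwise whenever $U = 0$, and then invoke the maximum principle to conclude $U < 0$ on $[0,T)$.

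First, I would compute the evolution of $\normho^2$ from Lemma \ref{evoinCP} and substitute the bounds from Lemma \ref{nqineq2} on $R_1$, $R_2$, $S_1$, $S_2$. Arranging the linear terms by subtracting $\frac{1}{n}$ of the $|H|^2$-evolution yields
\[ (\dt - \Delta)\normho^2 \leqslant -2\left|\nabla \mathring{h}\right|^2 + 2\normho^2(|h|^2 + n + 3 + 2\rho_2) - 3\rho_2^2 - \tfrac{2}{n}\rho_2|H|^2. \]
Second, by the chain rule applied to $\mathring{\psi}(|H|^2)$ and the evolution of $|H|^2$ (together with the Kato-type inequality $|\nabla|H|^2|^2 \leqslant 4|H|^2|\nabla H|^2$ to handle the $\mathring{\psi}''$ term),
\[ (\dt - \Delta)\mathring{\psi} \geqslant -2\bigl(\mathring{\psi}' + 2|H|^2\mathring{\psi}''\bigr)|\nabla H|^2 + 2\mathring{\psi}'|H|^2(|h|^2 + n - \rho_2) + 6\mathring{\psi}'S_2. \]

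The gradient portion of $(\dt - \Delta)U$ is then
$-2|\nabla\mathring{h}|^2 + 2(\mathring{\psi}' + 2|H|^2\mathring{\psi}'' - \varepsilon)|\nabla H|^2$.
The strict bound in Lemma \ref{psio}(ii) combined with $|\nabla\mathring{h}|^2 \geqslant \frac{2(n-4)}{n(n+8)}|\nabla H|^2$ (which follows from Lemma \ref{dA2} and the identity $|\nabla\mathring{h}|^2 = |\nabla h|^2 - \frac{1}{n}|\nabla H|^2$) renders this contribution nonpositive for $\varepsilon$ small. For the zeroth-order terms at a point where $U = 0$, I would substitute $\normho^2 = \mathring{\psi} - \varepsilon|H|^2 - \varepsilon$ and regroup so as to isolate the expression $2(\mathring{\psi} - |H|^2\mathring{\psi}')(\psi + n) + 6\mathring{\psi}$, which by Lemma \ref{psio}(iii) is $\leqslant 0$, with strict inequality whenever $|H|^2 > 0$. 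The remaining $\rho_2$-dependent terms, namely $4\mathring{\psi}\rho_2 + 2\mathring{\psi}'|H|^2\rho_2 - \tfrac{2}{n}\rho_2|H|^2 - 3\rho_2^2 - 6\mathring{\psi}'S_2$, are controlled using the upper bounds $\mathring{\psi} \leqslant \frac{|H|^2}{n(n-1)}$ and $\mathring{\psi}' < \frac{1}{n(n-1)}$ from Lemma \ref{psio}(i); since $q \geqslant n - 4 \geqslant 2$ forces $n \geqslant 6$, one checks that their sum is nonpositive.

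The main obstacle is ensuring strict negativity in the borderline case. Lemma \ref{psio}(iii) saturates at $|H|^2 = 0$, but the strict pinching $\normho^2 < \mathring{\psi} - \varepsilon|H|^2 - \varepsilon$ forces $\mathring{\psi}(|H|^2) > \varepsilon > 0$, which by $\mathring{\psi}(0) = 0$ implies $|H|^2 > 0$ throughout. This, together with the explicit $\varepsilon$-room produced by the gradient estimate and the perturbation, supplies the strict margin needed to conclude $(\dt - \Delta)U < 0$ on $\{U = 0\}$. The maximum principle then preserves the pinching for all $t \in [0, T)$.
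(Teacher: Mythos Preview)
Your strategy is the paper's: set $U = \normho^2 - \mathring{\psi} + \varepsilon|H|^2 + \varepsilon$, estimate $(\dt - \Delta)U$ via Lemma~\ref{nqineq2}, kill the gradient terms by Lemma~\ref{dA2} together with Lemma~\ref{psio}(ii), control the dominant zeroth-order piece by Lemma~\ref{psio}(iii), and the $\rho_2$-terms by Lemma~\ref{psio}(i), then invoke the maximum principle. The paper keeps the dependence on $U$ explicit rather than substituting at $U=0$, arriving at $\tfrac12(\dt-\Delta)U < U\bigl(U + 2\mathring{\psi} + n + 3 - 2\varepsilon + (\tfrac1n - \mathring{\psi}' - \varepsilon)|H|^2\bigr)$, but this is equivalent at the first touching point.

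There is one genuine gap in your sketch. After peeling off the Lemma~\ref{psio}(iii) expression and the $\rho_2$-terms, the remaining $\varepsilon$-perturbation contribution is (in the paper's organisation)
\[
\varepsilon\Bigl[-n-3+\varepsilon + \mathring{\psi}'|H|^2 - 2\mathring{\psi} + |H|^2\bigl(\mathring{\psi}'|H|^2 - \mathring{\psi} - 3 - \tfrac{1}{n} + \varepsilon\bigr)\Bigr].
\]
You assert this ``supplies $\varepsilon$-room'', but Lemma~\ref{psio}(i) alone does not make it negative: using only $\mathring{\psi}' < \tfrac{1}{n(n-1)}$ the quantity $|H|^2\mathring{\psi}' - \mathring{\psi}$ is bounded only by $\tfrac{|H|^2}{n(n-1)}$, so the bracket could be of order $|H|^4$ and overwhelm everything else. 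The paper closes this with Lemma~\ref{psio}(iv), namely $0\le x\mathring{\psi}'(x)-\mathring{\psi}(x)<2$, which forces both brackets to be negative. Your alternative route to strictness via $|H|^2>0$ at $U=0$ is valid but does not, by itself, dominate an uncontrolled positive term of order $\varepsilon|H|^4$; you still need (iv).
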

		
		\begin{proof}
			From Lemma \ref{dA2} and Lemma \ref{nqineq2}, we get
			\begin{eqnarray}
			\left( \dt - \Delta \right) \normho^2 & = & - 2 \left| \nabla \mathring{h}
			\right|^2 - 2 n \normho^2 + 2 R_1 - \frac{2}{n} R_2 + 2 S_1 - \frac{6}{n}
			S_2 \nonumber\\
			& \leqslant & - 2 \left| \nabla \mathring{h} \right|^2 + 2 \normho^2 | h
			|^2 - \frac{2}{n} \rho_2 | H |^2 + 4 \normho^2 \rho_2 + 2 (n + 3)
			\normho^2 .  \label{parahoh2}
			\end{eqnarray}
			We have the following evolution equation of $\mathring{\psi}$.
			\begin{eqnarray}
			\left( \dt - \Delta \right) \mathring{\psi} & = & - 2 \mathring{\psi}'
			\cdot | \nabla H |^2 - \mathring{\psi}'' \cdot | \nabla | H |^2 |^2 + 2
			\mathring{\psi}' (n | H |^2 + R_2 + 3 S_2) \nonumber\\
			& \geqslant & - 2 \left( \mathring{\psi}' + 2 \mathring{\psi}'' \cdot | H
			|^2 \right) | \nabla H |^2 + 2 \mathring{\psi}' \cdot | H |^2 (| h |^2 + n
			- \rho_2) .  \label{parafh2}
			\end{eqnarray}
			Let $U = \normho^2 - \mathring{\psi} + \varepsilon | H |^2 + \varepsilon$.
			We obtain
			\begin{eqnarray*}
				\frac{1}{2} \left( \dt - \Delta \right) U & \leqslant & \left(
				\mathring{\psi}' - \varepsilon + 2 \mathring{\psi}'' \cdot | H |^2 \right)
				| \nabla H |^2 - \left| \nabla \mathring{h} \right|^2\\
				&  & + \normho^2 \left( \normho^2 + \tfrac{1}{n} | H |^2 + n + 3 \right)
				- \left( \mathring{\psi}' - \varepsilon \right) | H |^2 \left( \normho^2 +
				\tfrac{1}{n} | H |^2 + n \right)\\
				&  & + \rho_2 \left( - \tfrac{1}{n} | H |^2 + 2 \normho^2 + \left(
				\mathring{\psi}' - \varepsilon \right) | H |^2 \right) .
			\end{eqnarray*}
			By Lemma \ref{dA2} and Lemma \ref{psio} (ii), the first line of the RHS of
			the formula above is nonpositive. From $\normho^2 = U + \mathring{\psi} -
			\varepsilon | H |^2 - \varepsilon$, we obtain
			\begin{eqnarray}
			\frac{1}{2} \left( \dt - \Delta \right) U & \leqslant & U \left( U + 2
			\mathring{\psi} + n + 3 - 2 \varepsilon + \left( \tfrac{1}{n} -
			\mathring{\psi}' - \varepsilon \right) | H |^2 + 2 \rho_2 \right)
			\nonumber\\
			&  & + \mathring{\psi} \left( \mathring{\psi} + \tfrac{1}{n} | H |^2 + n
			+ 3 \right) - \mathring{\psi}' \cdot | H |^2 \left( \mathring{\psi} +
			\tfrac{1}{n} | H |^2 + n \right) \nonumber\\
			&  & + \varepsilon \left[ - n - 3 + \varepsilon + \mathring{\psi}' \cdot
			| H |^2 - 2 \mathring{\psi} \right.  \label{paraUhico}\\
			&  & \quad \left. + | H |^2 \left( \mathring{\psi}' \cdot | H |^2 -
			\mathring{\psi} - 3 - \tfrac{1}{n} + \varepsilon \right) \right]
			\nonumber\\
			&  & + \rho_2 \left( - \tfrac{1}{n} | H |^2 + 2 \mathring{\psi} +
			\mathring{\psi}' | H |^2 - \varepsilon (3 | H |^2 + 2) \right) . \nonumber
			\end{eqnarray}
			This together with Lemma \ref{psio} implies
			\begin{equation}
			\frac{1}{2} \left( \dt - \Delta \right) U < U \left( U + 2 \mathring{\psi}
			+ n + 3 - 2 \varepsilon + \left( \tfrac{1}{n} - \mathring{\psi}' -
			\varepsilon \right) | H |^2 \right) . \label{paraUleU}
			\end{equation}
			Then the assertion follows from the maximum principle.
		\end{proof}
		
		Now we prove that the mean curvature flow has finite maximal existence time in
		this case.
		
		\begin{lemma}
			If the initial value $M_0$ satisfies $\normho^2 < \mathring{\psi}$, then $T$
			is finite.
		\end{lemma}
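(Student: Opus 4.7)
The plan is to prove $T < +\infty$ by contradiction, using the preserved pinching to extract a uniform positive lower bound on $|H|$, to derive exponential volume decay, and then to show this is incompatible with eternal smooth existence of the flow.

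By Theorem \ref{prehico}, there is a fixed $\varepsilon > 0$ such that
\[ \normho^2 < \mathring{\psi}(|H|^2) - \varepsilon |H|^2 - \varepsilon \]
on $M \times [0, T)$. Since $\normho^2 \geq 0$, this forces $\mathring{\psi}(|H|^2) > \varepsilon$ pointwise. As $\mathring{\psi}$ is continuous on $[0, \infty)$ with $\mathring{\psi}(0) = 0$ and $\mathring{\psi}(x) \leq x/(n(n-1))$ by Lemma \ref{psio}(i), the condition $\mathring{\psi}(y) > \varepsilon$ forces $y \geq \delta_0$ for some $\delta_0 = \delta_0(\varepsilon, n) > 0$. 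Hence $|H|^2 \geq \delta_0$ uniformly on $M \times [0, T)$.

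From the first variation of area, $\tfrac{d}{dt}\mathrm{Vol}(M_t) = -\int_{M_t}|H|^2\,dV_t \leq -\delta_0 \mathrm{Vol}(M_t)$, so $\mathrm{Vol}(M_t) \leq \mathrm{Vol}(M_0) e^{-\delta_0 t}$; if $T = +\infty$, then $\mathrm{Vol}(M_t) \to 0$. To obtain the contradiction, I would combine this with the evolution equation of Lemma \ref{evoinCP}(ii), rewritten using $R_2 = |H|^2(|h|^2 - \rho_2) \geq |H|^4 / n$ (which follows from $|h|^2 - \rho_2 = \rho_1 + |H|^2/n \geq |H|^2/n$) and $S_2 \geq 0$, yielding
\[ \dt |H|^2 \geq \Delta|H|^2 - 2|\nabla H|^2 + 2n|H|^2 + \tfrac{2}{n}|H|^4. \]
A parabolic maximum principle applied to the spatial minimum of $|H|^2$, once the $|\nabla H|^2$ term is suitably controlled, gives the quadratic ODE comparison $\phi' \geq \tfrac{2}{n}\phi^2 + 2n\phi$ for $\phi(t) = \min_{M_t}|H|^2$, which blows up in finite time. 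Together with $|h|^2 < \psi(|H|^2)$, this contradicts $T = +\infty$.

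The principal obstacle is managing the $-2|\nabla H|^2$ term in codimension $q \geq 2$: at a spatial extremum of $|H|^2$, one only has $\langle \nabla_i H, H\rangle = 0$, not $\nabla H = 0$, so Kato's inequality alone does not eliminate the term (as it does in the hypersurface case). I expect to handle this either by passing to a carefully chosen integral quantity (such as $\int_{M_t}|H|^{2p}\,dV_t$), where the gradient term can be absorbed via integration by parts against the strong reaction term $R_2 \geq |H|^4/n$ and then a Gr\"{o}nwall-type argument yields finite-time blow-up, or by invoking a non-collapsing estimate that pits the exponential volume decay of the second step against the pointwise curvature bound $|h|^2 \leq |H|^2/(n-1) + C$ afforded by the pinching.
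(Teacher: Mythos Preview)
Your first two steps are sound: the preserved pinching $\normho^2 < \mathring{\psi} - \varepsilon|H|^2 - \varepsilon$ does force a uniform positive lower bound on $|H|^2$, and the volume decays exponentially. But the third step is a genuine gap, and you have essentially said so yourself. In codimension $q\geq 2$ the term $-2|\nabla H|^2$ in the evolution of $|H|^2$ cannot be dropped at a spatial minimum (only $\langle\nabla H,H\rangle$ vanishes there), and neither of your proposed fixes is worked out. The integral route does not help in the direction you need: for $\int_{M_t}|H|^{2p}$, integration by parts produces \emph{negative} gradient terms $-p(p-1)\int |H|^{2(p-2)}|\nabla|H|^2|^2$ and $-2p\int |H|^{2(p-1)}|\nabla H|^2$, so you still cannot obtain a lower differential inequality that blows up. The non-collapsing suggestion is too vague to evaluate. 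As it stands, the argument does not close.

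The paper's proof avoids this obstacle entirely by working with the quantity $U=\normho^2-\mathring{\psi}$ rather than $|H|^2$. The evolution inequality for $U$ was already derived in the proof of Theorem~\ref{prehico}: the gradient terms $-|\nabla\mathring{h}|^2$ and $(\mathring{\psi}'+2|H|^2\mathring{\psi}'')|\nabla H|^2$ combine with the correct sign thanks to Lemma~\ref{dA2} and Lemma~\ref{psio}(ii), so there is no residual gradient term to control. One then reads off from~(\ref{paraUleU}) (with $\varepsilon=0$) that
\[
(\partial_t-\Delta)U \leq 2U\bigl(U+2\mathring{\psi}\bigr)=2U(2\normho^2-U)\leq -2U^2,
\]
since $U<0$ is preserved and $\normho^2\geq 0$. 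By the maximum principle $U_{\max}$ obeys $\tfrac{d}{dt}U_{\max}\leq -2U_{\max}^2$, so $U_{\max}\to-\infty$ in finite time; but $|U|\leq \mathring{\psi}\leq |H|^2/(n(n-1))$, so $|H|^2$ must blow up, forcing $T<\infty$. The key insight you are missing is that the pinching quantity $U$ itself already satisfies a Riccati-type inequality with no gradient obstruction.
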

		
		\begin{proof}
			Let $U = \normho^2 - \mathring{\psi}$. Then $U < 0$ holds for all $t \in
			[0, T)$. From (\ref{paraUleU}), we have
			\begin{eqnarray*}
				\left( \dt - \Delta \right) U & \leqslant & 2 U \left( U + 2
				\mathring{\psi} + n + 3 + \left( \tfrac{1}{n} - \mathring{\psi}' \right) |
				H |^2 \right)\\
				& \leqslant & 2 U \left( U + 2 \mathring{\psi} \right)\\
				& = & 2 U \left( 2 \normho^2 - U \right) \leqslant - 2 U^2 .
			\end{eqnarray*}
			From the maximum principle, $U$ will blow up in finite time. Therefore, $T$
			must be finite.
		\end{proof}
		
		Let $f_{\sigma} = \normho^2 / \mathring{\psi}^{1 - \sigma}$, where $\sigma \in
		(0, 1)$ is a positive constant. Then we have
		
		\begin{lemma}
			\label{ptf3}If $M_0$ satisfies $\normho^2 < \mathring{\psi}$, then there
			exists a small positive constant $\varepsilon$, such that the following
			inequality holds along the mean curvature flow.
			\[ \dt f_{\sigma} \leqslant \Delta f_{\sigma} + \frac{2}{\mathring{\psi}} |
			\nabla f_{\sigma} | \left| \nabla \mathring{\varphi} \right| - \frac{2
				\varepsilon f_{\sigma}}{\normho^2} \left| \nabla \mathring{h} \right|^2 +
			2 \sigma | h |^2 f_{\sigma} + 4 n f_{\sigma} . \]
		\end{lemma}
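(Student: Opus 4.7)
The plan is to follow the template of Lemmas \ref{ptf1} and \ref{ptf2}, substituting $\mathring{\psi}$ for $\mathring{\varphi}_\varepsilon$ and using the parabolic estimates (\ref{parahoh2}) and (\ref{parafh2}) established during the proof of Theorem \ref{prehico}. Logarithmic differentiation of $f_\sigma = \normho^2 / \mathring{\psi}^{1-\sigma}$ gives
\[
\left(\dt - \Delta\right) f_\sigma = f_\sigma\!\left[\frac{\left(\dt-\Delta\right)\normho^2}{\normho^2} - \frac{(1-\sigma)\left(\dt-\Delta\right)\mathring{\psi}}{\mathring{\psi}}\right] + 2(1-\sigma)\frac{\langle\nabla f_\sigma, \nabla\mathring{\psi}\rangle}{\mathring{\psi}} - \sigma(1-\sigma) f_\sigma\frac{|\nabla\mathring{\psi}|^2}{\mathring{\psi}^2}.
\]
The last summand is non-positive and can be discarded, while Cauchy--Schwarz on the middle summand (together with $1-\sigma<1$) produces the claimed cross term $\frac{2}{\mathring{\psi}}|\nabla f_\sigma||\nabla\mathring{\psi}|$. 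It therefore remains to bound the bracketed expression by $-\frac{2\varepsilon}{\normho^2}|\nabla\mathring{h}|^2 + 2\sigma|h|^2 + 4n$.

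For the gradient part of the bracket, Lemma \ref{psio}(ii) bounds $\mathring{\psi}' + 2\mathring{\psi}''|H|^2$ by $\frac{2(n-4)}{n(n+8)}$, while Lemma \ref{dA2} together with $|\nabla\mathring{h}|^2 = |\nabla h|^2 - \frac{1}{n}|\nabla H|^2$ yields, in the regime $q\geq n-4$, the matching inequality $|\nabla\mathring{h}|^2 \geq \frac{2(n-4)}{n(n+8)}|\nabla H|^2$. These collapse the gradient piece to $\frac{2|\nabla\mathring{h}|^2\bigl((1-\sigma)\normho^2 - \mathring{\psi}\bigr)}{\normho^2 \mathring{\psi}}$. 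Theorem \ref{prehico} supplies $\mathring{\psi} - \normho^2 \geq \varepsilon(|H|^2 + 1)$, and Lemma \ref{psio}(i) gives $\mathring{\psi} \leq \frac{|H|^2}{n(n-1)}$, whence $\frac{|H|^2+1}{\mathring{\psi}} \geq n(n-1) \geq 2$; after multiplication by $f_\sigma$ the gradient contribution lies below $-\frac{2\varepsilon f_\sigma}{\normho^2}|\nabla\mathring{h}|^2$.

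The remaining non-gradient curvature part
\[
F := 2(|h|^2 + n + 3 + 2\rho_2) - \frac{2\rho_2|H|^2}{n\normho^2} - \frac{2(1-\sigma)\mathring{\psi}'|H|^2(|h|^2 + n - \rho_2)}{\mathring{\psi}}
\]
should be bounded by $2\sigma|h|^2 + 4n$. I would write $|h|^2 = \psi - U$ with $U = \mathring{\psi} - \normho^2 > 0$ and apply Lemma \ref{psio}(iii), which after dividing by $\mathring{\psi}$ reads $\frac{\mathring{\psi}'|H|^2(\psi+n)}{\mathring{\psi}} \geq \psi + n + 3$; this replaces the last term in $F$ and produces $2\sigma|h|^2 - 2(1-\sigma)U + 2\sigma(n+3)$ plus a residual involving $\frac{\mathring{\psi}'|H|^2(U+\rho_2)}{\mathring{\psi}}$. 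Lemma \ref{psio}(iv) ($\mathring{\psi}'|H|^2 \leq \mathring{\psi} + 2$) combined with $U+\rho_2 \leq \mathring{\psi}$ bounds that residual by $2(1-\sigma)(U+\rho_2) + 4$, so the $-2(1-\sigma)U$ cancels and what remains is $(6-2\sigma)\rho_2 - \frac{2\rho_2|H|^2}{n\normho^2} + 4 + 2\sigma(n+3)$. The $\rho_2$-terms become non-positive because $\normho^2 < \mathring{\psi} \leq \frac{|H|^2}{n(n-1)}$ forces $\frac{|H|^2}{n\normho^2} > n - 1 \geq 5$ (recall $n \geq 6$), leaving $F \leq 2\sigma|h|^2 + 2\sigma(n+3) + 4 \leq 2\sigma|h|^2 + 4n$ for all $\sigma \in (0,1)$, as required.

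The delicate part is this last step: Lemma \ref{psio}(iii) is an equality at $|H| = 0$, so there is no absolute slack to exploit, and matching the precise constant $4n$ forces the pairing $|h|^2 = \psi - U$ together with careful bookkeeping of the residual $2(1-\sigma)U$ via Lemma \ref{psio}(iv); the regime $n \geq 6$ (imposed by $q\geq n-4 \geq 2$) is exactly what lets the $\rho_2$-contribution be dominated by the $\frac{|H|^2}{n\normho^2}$ term so that no $\rho_2$ residue survives into the final estimate.
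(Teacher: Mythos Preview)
Your proof is correct and follows the same template as the paper's, with two minor variations worth noting. In the gradient step, the paper obtains the $\varepsilon$ directly from the \emph{strict} inequality in Lemma~\ref{psio}(ii) (writing $\mathring{\psi}'+2|H|^2\mathring{\psi}''\le(1-\varepsilon)\frac{2(n-4)}{n(n+8)}$ and then using only $\normho^2<\mathring{\psi}$), whereas you use the non-strict bound and then invoke the quantitative gap $\mathring{\psi}-\normho^2\ge\varepsilon(|H|^2+1)$ from Theorem~\ref{prehico}; both routes are valid. For the non-gradient part the paper is more direct: it keeps the main and $\rho_2$ pieces separate, bounds the first by $2n+\sigma|h|^2$ using only $\mathring{\psi}-\mathring{\psi}'|H|^2\le 0$ (Lemma~\ref{psio}(iv)) after dropping the harmless $-n\mathring{\psi}'|H|^2/\mathring{\psi}$ term, and shows the second is nonpositive via Lemma~\ref{psio}(i), avoiding your introduction of $U=\mathring{\psi}-\normho^2$ and the use of Lemma~\ref{psio}(iii). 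Your bookkeeping is more intricate but arrives at the same bound $2\sigma|h|^2+4n$.
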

		
		\begin{proof}
			By a straightforward calculation, we have
			\begin{eqnarray*}
				\left( \dt - \Delta \right) f_{\sigma} & = & f_{\sigma} \left[
				\frac{1}{\normho^2} \left( \dt - \Delta \right) \normho^2 - \frac{1 -
					\sigma}{\mathring{\psi}} \left( \dt - \Delta \right) \mathring{\psi}
				\right]\\
				&  & + 2 (1 - \sigma) \frac{\left\langle \nabla f_{\sigma}, \nabla
					\mathring{\psi} \right\rangle}{\mathring{\psi}} - \sigma (1 - \sigma)
				f_{\sigma}  \frac{| \nabla \mathring{\psi} |^2}{|
					\mathring{\psi} |^2} .
			\end{eqnarray*}
			Using (\ref{parahoh2}) and (\ref{parafh2}), we have
			\begin{eqnarray*}
				\left( \dt - \Delta \right) f_{\sigma} & \leqslant & 2 f_{\sigma} \left[ -
				\frac{\left| \nabla \mathring{h} \right|^2}{\normho^2} + (1 - \sigma)
				\frac{\mathring{\psi}' + 2 H^2 \mathring{\psi}''}{\mathring{\psi}} |
				\nabla H |^2 \right]\\
				&  & + 2 f_{\sigma} \left[ | h |^2 + n + 3 - (1 - \sigma)
				\frac{\mathring{\psi}' \cdot | H |^2}{\mathring{\psi}} (| h |^2 + n)
				\right]\\
				&  & + 2 f_{\sigma} \rho_2 \left[ - \frac{| H |^2}{n \normho^2} + 2 + (1
				- \sigma) \frac{\mathring{\psi}' \cdot | H |^2}{\mathring{\psi}} \right] +
				\frac{2}{\mathring{\psi}} | \nabla f_{\sigma} | \left| \nabla
				\mathring{\psi} \right| .
			\end{eqnarray*}
			From Lemma \ref{dA2} and Lemma \ref{psio} (ii), we have
			\begin{eqnarray*}
				&  & - \frac{\left| \nabla \mathring{h} \right|^2}{\normho^2} + (1 -
				\sigma) \frac{\mathring{\psi}' + 2 H^2 \mathring{\psi}''}{\mathring{\psi}}
				| \nabla H |^2\\
				& \leqslant & - \frac{\left| \nabla \mathring{h} \right|^2}{\normho^2} +
				(1 - \sigma) \frac{(1 - \varepsilon) \frac{2 (n - 4)}{n (n +
						8)}}{\mathring{\psi}} | \nabla H |^2\\
				& \leqslant & - \frac{\left| \nabla \mathring{h} \right|^2}{\normho^2} +
				\frac{(1 - \varepsilon) \left| \nabla \mathring{h} \right|^2}{\normho^2} =
				- \frac{\varepsilon \left| \nabla \mathring{h} \right|^2}{\normho^2} .
			\end{eqnarray*}
			By Lemma \ref{psio} (iii), we get
			\begin{eqnarray*}
				&  & | h |^2 + n + 3 - (1 - \sigma) \frac{\mathring{\psi}' \cdot | H
					|^2}{\mathring{\psi}} (| h |^2 + n)\\
				& \leqslant & (1 - \sigma) \frac{\mathring{\psi} - \mathring{\psi}' \cdot
					| H |^2}{\mathring{\psi}} | h |^2 + n + 3 + \sigma | h |^2\\
				& \leqslant & 2 n + \sigma | h |^2 .
			\end{eqnarray*}
			By Lemma \ref{psio} (i), we have
			\[ - \frac{| H |^2}{n \normho^2} + 2 + (1 - \sigma) \frac{\mathring{\psi}'
				\cdot | H |^2}{\mathring{\psi}} \leqslant \frac{- \frac{1}{n} | H |^2 + 2
				\mathring{\psi} + \mathring{\psi}' \cdot | H |^2}{\mathring{\psi}}
			\leqslant 0. \]
			This completes the proof of the lemma.
		\end{proof}

		\section{An estimate for traceless second fundamental form}

		Let $F : M^n \times [0, T) \rightarrow \mathbb{C}\mathbb{P}^{\frac{n +
				q}{2}}$ be a mean curvature flow. Suppose that the initial value $M_0$
		satisfies the condition in Theorem \ref{theoh}. We put
		\[ W = \left\{ \begin{array}{ll}
		\mathring{\varphi}_{\varepsilon}, & q = 1,\\
		\frac{1}{n (n - 1)} | H |^2 + 2 - \frac{3}{n}, & 2 \leqslant q < n - 4,\\
		\mathring{\psi}, & q \geqslant n - 4.
		\end{array} \right. \]
		By the conclusions of the previous section, there exists a sufficiently small
		positive number $\varepsilon$, such that for all $t \in [0, T)$, the following
		pinching condition holds.
		\[ \normho^2 < W - \varepsilon | H |^2 - \varepsilon . \]
		From this inequality and the definition of $W$, we have $\varepsilon | H |^2 +
		\varepsilon < W < \frac{| H |^2}{n (n - 1)} + n$.
		
		We introduce an auxiliary function:
		\[ f_{\sigma} = \frac{\normho^2}{W^{1 - \sigma}}, \]
		In this section, we will show that $f_{\sigma}$ decays exponentially.
		
		\begin{lemma}
			\label{ptf}There exist positive constants $\varepsilon$ and $C_1$ depending
			on $M_0$, such that
			\[ \dt f_{\sigma} \leqslant \Delta f_{\sigma} + \frac{2 C_1}{\normho} |
			\nabla f_{\sigma} | \left| \nabla \mathring{h} \right| -
			\frac{\varepsilon f_{\sigma}}{\normho^2} \left| \nabla \mathring{h}
			\right|^2 + \frac{4 n \sigma}{\varepsilon} f_{\sigma} W + (\chi - 2
			\varepsilon) f_{\sigma}, \]
			where $\chi = 5 n \frac{\max \{ q - n + 5, 0 \}}{q - n + 5}$.
		\end{lemma}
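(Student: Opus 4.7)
The plan is to unify the three case-specific differential inequalities in Lemmas \ref{ptf1}, \ref{ptf2}, and \ref{ptf3} into the single form stated above. Each previous lemma produces an inequality with (a) a cross-term of the shape $|\nabla f_\sigma|$ times $|\nabla W|/W$, (b) a negative quadratic term proportional to $|\nabla \mathring h|^2 f_\sigma/\normho^2$, (c) a polynomial term of the form $2\sigma|h|^2 f_\sigma$ (or $2\sigma(|h|^2+n)f_\sigma$ in case (ii)), and (d) an isolated constant multiple of $f_\sigma$ whose sign is case-dependent. I will absorb (a)--(d) into the stated right-hand side using only the preserved pinching $\normho^2 < W - \varepsilon |H|^2 - \varepsilon$, which yields $|H|^2 \le W/\varepsilon$, $W \ge \varepsilon$, and $\normho < \sqrt{W}$.

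For (a), write $\nabla W = W'(|H|^2)\,\nabla|H|^2 = 2W'(|H|^2)\,\langle H,\nabla H\rangle$ (using $W = \mathring\varphi_\varepsilon$, $W = k|H|^2+l$, or $W = \mathring\psi$, respectively), and apply Lemma \ref{dA2} to get $|\nabla H| \le C(n)\,|\nabla \mathring h|$ from the identity $|\nabla \mathring h|^2 = |\nabla h|^2 - \tfrac{1}{n}|\nabla H|^2$ combined with the stated lower bounds on $|\nabla h|^2$. What then remains is to bound $\normho \cdot W'(|H|^2)\,|H|/W$ by a constant $C_1 = C_1(n,\varepsilon,M_0)$; since $\normho < \sqrt W$, this reduces to bounding $W'(|H|^2)\,|H|/\sqrt W$, which follows in case (i) from Lemma \ref{funcphy}(iii) together with $W \ge \varepsilon(|H|^2+1)$, in case (ii) from $W' = k$ being constant and $W \ge l$, and in case (iii) from Lemma \ref{psio}(iv) combined with $W \ge \varepsilon$. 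For (b), the coefficient of $|\nabla\mathring h|^2 f_\sigma/\normho^2$ in each of the previous lemmas is a positive constant (proportional to $\varepsilon$ in cases (i) and (iii)) and, after possibly shrinking $\varepsilon$, dominates the target coefficient $\varepsilon$.

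For (c), the pinching gives $|h|^2 = \normho^2 + |H|^2/n \le W + W/(n\varepsilon) \le 2W/(n\varepsilon)$ whenever $\varepsilon \le 1/n$, whence $2\sigma|h|^2 f_\sigma \le (4\sigma/(n\varepsilon))\,W f_\sigma \le (4n\sigma/\varepsilon)\,W f_\sigma$; the extra $+2\sigma n\,f_\sigma$ summand in case (ii) is absorbed similarly using $W\ge\varepsilon$ and $n \ge 2$. For (d), cases (i) and (ii) leave a strictly negative residue, $-(\varepsilon/n)f_\sigma$ and $-2\varepsilon l\,f_\sigma$ with $l = 2 - 3/n \ge 1$; replacing $\varepsilon$ by a smaller $\varepsilon' \le \varepsilon/(2n)$ makes each of these at most $-2\varepsilon'\,f_\sigma = (\chi - 2\varepsilon')f_\sigma$ since $\chi = 0$. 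In case (iii), the residue $+4n\,f_\sigma$ from Lemma \ref{ptf3} satisfies $4n \le 5n - 2\varepsilon = \chi - 2\varepsilon$ provided $\varepsilon \le n/2$.

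The main obstacle is the bookkeeping required to ensure a single $\varepsilon$ and a single constant $C_1$ serve all four absorptions simultaneously and uniformly across the three ranges of codimension. This is handled by fixing $\varepsilon$ at the very end, small enough to satisfy the finitely many smallness constraints extracted above; $C_1$ is then determined in terms of $n$, $\varepsilon$, and the pinching functions, which depend only on $n$ (and, in case (i), on $\varepsilon$ through the family $\mathring\varphi_\varepsilon$) and hence only on $n$ and $M_0$.
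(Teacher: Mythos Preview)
Your proposal is correct and follows essentially the same approach as the paper: both unify Lemmas~\ref{ptf1}, \ref{ptf2}, \ref{ptf3} into a single inequality, then convert the cross-term $\frac{2}{W}|\nabla f_\sigma||\nabla W|$ into $\frac{2C_1}{\normho}|\nabla f_\sigma||\nabla\mathring h|$ via the bound $|H|/\sqrt W \le 1/\sqrt\varepsilon$ (from the preserved pinching $W>\varepsilon|H|^2+\varepsilon$) together with $|\nabla H|\le C(n)|\nabla\mathring h|$ from Lemma~\ref{dA2}, and finally absorb $2\sigma(|h|^2+n)$ into $\frac{4n\sigma}{\varepsilon}W$ using $|h|^2+n < \frac{2n}{\varepsilon}W$. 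One minor remark: the specific citations you give for bounding $W'|H|/\sqrt W$ (Lemma~\ref{funcphy}(iii) and Lemma~\ref{psio}(iv)) are not the most direct---what is actually needed is simply that $W'$ is uniformly bounded, which in case~(iii) is Lemma~\ref{psio}(i) rather than~(iv), and in case~(i) follows from the explicit form of $\mathring\varphi_\varepsilon'$; the paper just records this as the existence of a constant $B_1$ with $|\nabla W|<B_1|\nabla|H|^2|$.
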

		
		\begin{proof}
			Combining the conclusions of Lemmas \ref{ptf}, \ref{ptf2} and \ref{ptf3}, we
			have the following inequality with some suitable small $\varepsilon$.
			\[ \dt f_{\sigma} \leqslant \Delta f_{\sigma} + \frac{2}{W} | \nabla
			f_{\sigma} | | \nabla W | - \frac{\varepsilon f_{\sigma}}{\normho^2}
			\left| \nabla \mathring{h} \right|^2 + 2 \sigma f_{\sigma} (| h |^2 + n)
			+ (\chi - 2 \varepsilon) f_{\sigma} . \]
			By the definition of $W$, there exists a constant $B_1$ such that $| \nabla
			W | < B_1 | \nabla | H |^2 |$. Let $C_1$ be a constant such that $2 B_1 | H
			| / \sqrt{W} < \frac{1}{n} C_1$. From Lemma \ref{dA2}, we have
			\begin{equation}
			\frac{| \nabla W |}{W} \leqslant \frac{2 B_1 | H | | \nabla H |}{\sqrt{W}
				\normho} \leqslant \frac{\frac{1}{n} C_1 | \nabla H |}{\normho} \leqslant
			\frac{C_1 \left| \nabla \mathring{h} \right|}{\normho} .
			\label{aopaodfsdh2}
			\end{equation}
			We also have
			\[ | h |^2 + n < W + \frac{1}{n} | H |^2 + n < W + n (| H |^2 + 1) < \frac{2
				n}{\varepsilon} W. \]
			Thus we complete the proof of this lemma.
		\end{proof}
		
		We need the following estimate for the the Laplacian of $\normho^2$.
		
		\begin{lemma}
			\label{lapa0}There exists a positive constant $C_2 > 1$, such that
			\[ \Delta \normho^2 \geqslant 2 \left\langle \mathring{h}, \nabla^2 H
			\right\rangle + 2 \left( \varepsilon \normho^2 - C_2 \right) W. \]
		\end{lemma}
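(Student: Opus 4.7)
The plan is to derive a static Simons-type identity for $\Delta \normho^2$ in the K\"ahler setting, and then estimate the algebraic remainder using the preserved pinching condition. I would begin with the Simons identity for the Laplacian of the second fundamental form: applied to a submanifold $M^n \subset \mathbb{C}\mathbb{P}^m$, and exploiting the K\"ahler property $\bar{\nabla}\bar{R} \equiv 0$ (so no third-derivative curvature terms appear), it takes the form $\Delta h^{\alpha}_{ij} = \nabla_i \nabla_j H^{\alpha} + P^{\alpha}_{ij}$, where $P$ is a purely algebraic tensor comprising (i) cubic-in-$h$ contractions arising from the Gauss equation and from commuting covariant derivatives, and (ii) linear-in-$h$ contractions with the ambient curvature tensor $\bar{R}$, whose components (by (\ref{CPcurv})) are bounded by a constant depending only on $n$. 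Taking the pointwise inner product with the traceless tensor $\mathring{h}$, and using $\tfrac{1}{2}\Delta \normho^2 = \left|\nabla \mathring{h}\right|^2 + \langle \mathring{h}, \Delta h\rangle$ (valid because the cross-term with $g \otimes \Delta H$ vanishes by the tracelessness of $\mathring{h}$), I obtain
\[
\tfrac{1}{2}\Delta \normho^2 \;=\; \left|\nabla \mathring{h}\right|^2 + \langle \mathring{h}, \nabla^2 H\rangle + \langle \mathring{h}, P\rangle.
\]

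The main step is then to bound $\langle \mathring{h}, P\rangle$ from below by $(\varepsilon \normho^2 - C_2) W$. I would decompose $h = \mathring{h} + \tfrac{1}{n}H \otimes g$ and observe that any contraction in $P$ involving a purely ``trace type'' factor against $\mathring{h}$ annihilates by tracelessness; this eliminates precisely the $|H|^4$-type terms that would otherwise produce a $W^2$ blow-up. The surviving contractions reduce to quartic expressions controlled by $\normho^2 |h|^2$ plus $\normho \cdot |H|$ cross-terms (which I absorb by AM-GM), together with quadratic-in-$h$ contributions of the form $\normho^2 + \normho |H|$ from the ambient curvature. Applying the preserved pinching condition from Theorems \ref{preh}, \ref{pinch} and \ref{prehico}, namely $\normho^2 < W - \varepsilon |H|^2 - \varepsilon$, yields $|H|^2 \leqslant W/\varepsilon$ and $|h|^2 \leqslant (1 + (n\varepsilon)^{-1}) W$, and these estimates combine to give the desired lower bound. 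Dropping the nonnegative $\left|\nabla \mathring{h}\right|^2$ then yields the stated inequality with $C_2$ a constant depending only on $n$, $q$, $\varepsilon$ and $M_0$.

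The main obstacle is the algebraic bookkeeping of $\langle \mathring{h}, P\rangle$. One must carefully identify the contractions that are eliminated by tracelessness, and then verify that the surviving quartic contributions admit bounds linear in $W$ (rather than quadratic), using estimates analogous to those of Lemmas \ref{nqineq} and \ref{nqineq2}. The positive $\varepsilon \normho^2 W$ contribution on the right-hand side arises from combining the intrinsic Ricci-type term (produced by the Gauss equation when the cubic contraction is expanded via $h = \mathring{h} + \tfrac{1}{n}H\otimes g$) with the pinching margin $W - \normho^2 \geqslant \varepsilon (|H|^2 + 1)$, the latter being the one mechanism through which a small positive $\varepsilon$ enters the inequality.
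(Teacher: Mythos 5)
Your starting identity is the same as the paper's: the paper also writes $\Delta \normho^2 = 2\langle \mathring{h}, \nabla^2 H\rangle + 2|\nabla\mathring{h}|^2 + 2(R_3 - R_1) + 2n\normho^2 - 2S_1 + \tfrac{6}{n}S_2 + 6S_3$, bounds the ambient-curvature terms by a constant times $|h|^2\leqslant CW$, and is then left with the quartic reaction term (your $\langle\mathring{h},P\rangle$). The genuine gap is in how you treat that quartic term. The lemma demands a \emph{strictly positive} leading term $+\varepsilon\normho^2 W$, and your plan of bounding the surviving contractions "by $\normho^2|h|^2$ plus cross-terms absorbed by AM-GM" cannot deliver it: it is a signed, borderline estimate. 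After discarding the trace pieces, the dangerous contribution is a cubic cross-term of size $-c\,|H|\,\normho^3$, which is of exactly the same order as the positive budget $\tfrac{1}{n}|H|^2\normho^2$, because $\normho\leqslant\sqrt{W}$ and $W$ may be as large as $\tfrac{|H|^2}{n(n-1)}+n$. The surplus available after reserving the pinching margin is only about $\tfrac{1}{n}|H|^2 - W \approx \tfrac{n-2}{n(n-1)}|H|^2$, so positivity forces $c\leqslant \tfrac{n-2}{\sqrt{n(n-1)}}<1$; the naive Cauchy--Schwarz bound ($c=1$), or any unweighted AM-GM absorption, overdraws the budget and produces $-C\normho^2W$ with the wrong sign, which is useless for the decay argument of Section 4. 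The paper obtains the sharp constant from the Okumura-type inequality of Santos (Lemma 2.6 of \cite{MR1289187}) / Shiohama--Xu (Prop. 1.6 of \cite{MR1633163}), yielding the lower bound (\ref{R3grea}) for $R_3$; combined with Lemma \ref{nqineq} (i), the $|H|^2\normho^2$-order terms then cancel \emph{exactly} (note $\tfrac{1}{n}-\tfrac{n-2}{n(n-1)}=\tfrac{1}{n(n-1)}$, matching the slope of $W$, and for $q\geqslant n-4$ the bound $\mathring{\psi}(x)\leqslant\tfrac{x}{n(n-1)}$ of Lemma \ref{psio} plays the same role), leaving $\varepsilon(|H|^2+1)\geqslant\varepsilon W/n$ from the strict pinching margin minus bounded multiples of $\normho^2$ and $W$.

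Two further points you pass over: the positive term $\tfrac{1}{n}|H|^2\normho^2$ itself only survives because of a signed cancellation between $+\tfrac{3\rho_1+\rho_2}{n}|H|^2$ in the lower bound for $R_3$ and $-\tfrac{2}{n}(\rho_2+\normho^2)|H|^2$ coming from Lemma \ref{nqineq} (i), so "controlled by $\normho^2|h|^2$" in absolute value is again not enough; and the $\rho_2$-corrections ($-\tfrac12\normho\rho_2$ inside (\ref{R3grea}), the $2\rho_1\rho_2+\tfrac12\rho_2^2$ terms in $R_1$) must be tracked and disposed of by a separate quadratic-in-$\rho_2$ argument, as in the last two lines of the paper's estimate of $R_3-R_1$. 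Your identification of the pinching margin $W-\normho^2\geqslant\varepsilon(|H|^2+1)$ as the source of $\varepsilon$ is correct, but without the sharp cubic estimate and the exact arithmetic match with the ratio $\tfrac{1}{n(n-1)}$ built into $W$, the proof as proposed does not close.
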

		
		\begin{proof}
			We have the following identity
			\[ \Delta \normho^2 = 2 \left\langle \mathring{h}, \nabla^2 H \right\rangle
			+ 2 \left| \nabla \mathring{h} \right|^2 - 2 R_1 + 2 R_3 + 2 n \normho^2
			- 2 S_1 + \tfrac{6}{n} S_2 + 6 S_3, \]
			where $R_3 = H^{\alpha} h^{\alpha}_{i k} h^{\beta}_{i j} h^{\beta}_{j k}$,
			$S_3 = \mathring{h}^{\alpha}_{i j} H^{\beta} J_{i \alpha} J_{j \beta}$. Then
			there exists a positive constant $B_2$ only depending on $n$, such that
			\[ \Delta \normho^2 \geqslant 2 \left\langle \mathring{h}, \nabla^2 H
			\right\rangle + 2 (R_3 - R_1) - 2 B_2 | h |^2 . \]

We choose a local orthonormal frame, such that $H = | H | e_{n + 1}$ and
$\mathring{h}^1 = \tmop{diag} \left( \mathring{\lambda}_1, \cdots,
\mathring{\lambda}_n \right)$. Let $\rho_1 = \left| \mathring{h}^{n + 1}
\right|^2$, $\rho_2 = \sum_{\alpha > n + 1} \left| \mathring{h}^{\alpha}
\right|^2$. Expanding $R_3$, we get
\begin{eqnarray*}
R_3 & = & \frac{1}{n^2} | H |^4 + \frac{3 \rho_1 + \rho_2}{n} | H |^2 + |
H | \sum_{\alpha, i} \mathring{\lambda}_i \left( \mathring{h}^{\alpha}_{i
i} \right)^2 + \frac{| H |}{2} \sum_{\tmscript{\begin{array}{c}
\alpha > 1\\
i \neq j
\end{array}}} \left( \mathring{\lambda}_i + \mathring{\lambda}_j \right)
\left( \mathring{h}^{\alpha}_{i j} \right)^2 .
\end{eqnarray*}
By Lemma 2.6 of {\cite{MR1289187}} or Proposition 1.6 of {\cite{MR1633163}},
we have
\begin{eqnarray*}
\sum_{\alpha, i} \mathring{\lambda}_i \left( \mathring{h}^{\alpha}_{i i}
\right)^2 & \geqslant & - \frac{n - 2}{\sqrt{n (n - 1)}} \sqrt{\rho_1} 
\Bigg( \normho^2 - \sum_{\tmscript{\begin{array}{c}
\alpha > 1\\
i \neq j
\end{array}}} \left( \mathring{h}^{\alpha}_{i j} \right)^2 \Bigg)\\
& \geqslant & - \frac{n - 2}{\sqrt{n (n - 1)}}  \Bigg( \frac{1}{2} \left(
P_1 + \normho^2 \right) \normho - \sqrt{\rho_1}
\sum_{\tmscript{\begin{array}{c}
\alpha > 1\\
i \neq j
\end{array}}} \left( \mathring{h}^{\alpha}_{i j} \right)^2 \Bigg)\\
& \geqslant & - \frac{n - 2}{\sqrt{n (n - 1)}} \Bigg( \normho^3 -
\frac{1}{2} \normho \rho_2 - \sqrt{\rho_1}
\sum_{\tmscript{\begin{array}{c}
\alpha > 1\\
i \neq j
\end{array}}} \left( \mathring{h}^{\alpha}_{i j} \right)^2 \Bigg) .
\end{eqnarray*}
We also have
\[ \sum_{\tmscript{\begin{array}{c}
\alpha > 1\\
i \neq j
\end{array}}} \left( \mathring{\lambda}_i + \mathring{\lambda}_j \right)
\left( \mathring{h}^{\alpha}_{i j} \right)^2 \geqslant
\sum_{\tmscript{\begin{array}{c}
\alpha > 1\\
i \neq j
\end{array}}} - \sqrt{2 \left( \mathring{\lambda}_i^2 +
\mathring{\lambda}_j^2 \right)} \left( \mathring{h}^{\alpha}_{i j}
\right)^2 \geqslant - \sqrt{2 \rho_1} \sum_{\tmscript{\begin{array}{c}
\alpha > 1\\
i \neq j
\end{array}}} \left( \mathring{h}^{\alpha}_{i j} \right)^2 . \]
Note that $ \tfrac{n - 2}{\sqrt{n (n - 1)}} > \frac{\sqrt{2}}{2}$ if $n
\geqslant 6$, and $\sum_{\alpha > 1, i \neq j} \left(
\mathring{h}^{\alpha}_{i j} \right)^2 = 0$ if $q = 1$. We get
\begin{equation}
R_3 \geqslant \frac{1}{n^2} | H |^4 + \frac{3 \rho_1 + \rho_2}{n} | H |^2
- \tfrac{n - 2}{\sqrt{n (n - 1)}} | H | \left( \normho^3 - \frac{1}{2}
\normho \rho_2 \right) . \label{R3grea}
\end{equation}
From (\ref{R3grea}) and Lemma \ref{nqineq} (i), we get
\begin{eqnarray*}
R_3 - R_1 & \geqslant & \normho^2 \left( - \normho^2 + \frac{1}{n} | H |^2
- \tfrac{n - 2}{\sqrt{n (n - 1)}} | H | \normho \right)\\
&  & + \rho_2 \left( - 2 \normho^2 + \tfrac{n - 2}{2 \sqrt{n (n - 1)}} |
H | \normho + \frac{3}{2} \rho_2 \right)\\
& \geqslant & \normho^2 \left[ \varepsilon (| H |^2 + 1) - W +
\frac{1}{n} | H |^2 - \tfrac{n - 2}{\sqrt{n (n - 1)}} | H | \sqrt{W}
\right]\\
&  & - \rho_2 \left[ \normho \left( 2 \sqrt{W} - \tfrac{n - 2}{2 \sqrt{n
(n - 1)}} | H | \right) - \frac{3}{2} \rho_2 \right]\\
& \geqslant & \normho^2 \left[ \varepsilon (| H |^2 + 1) + \tfrac{(n - 2)
| H |^2}{n (n - 1)} - n - \tfrac{n - 2}{\sqrt{n (n - 1)}} | H |
\sqrt{\tfrac{| H |^2}{n (n - 1)} + n} \right]\\
&  & - \rho_2 \left[ \normho \left( 2 \sqrt{\tfrac{| H |^2}{n (n - 1)} +
n} - \tfrac{n - 2}{2 \sqrt{n (n - 1)}} | H | \right) - \frac{3}{2} \rho_2
\right]\\
& \geqslant & \normho^2 [\varepsilon (| H |^2 + 1) - (n - 1) n] - \rho_2
\left[ \normho \cdot 2 \sqrt{n} - \frac{3}{2} \rho_2 \right]\\
& \geqslant & \normho^2  [\varepsilon (| H |^2 + 1) - (n - 1) n] -
\frac{2 n}{3} \normho^2 .
\end{eqnarray*}
By choosing suitable large $C_2$, we complete the proof of this lemma.
\end{proof}

From (\ref{aopaodfsdh2}) and Lemma \ref{lapa0}, we have
\begin{eqnarray*}
\Delta f_{\sigma} & = & f_{\sigma} \left( \frac{\Delta \normho^2}{\normho^2}
- (1 - \sigma) \frac{\Delta W}{W} \right) - 2 (1 - \sigma) \frac{\langle
\nabla f_{\sigma}, \nabla W \rangle}{W} + \sigma (1 - \sigma) f_{\sigma} 
\frac{| \nabla W |^2}{W^2}\\
& \geqslant & \frac{f_{\sigma} \Delta \normho^2}{\normho^2} - (1 - \sigma)
\frac{f_{\sigma} \Delta W}{W} - \frac{2 C_1 | \nabla f_{\sigma} | \left|
\nabla \mathring{h} \right|}{\normho}\\
& \geqslant & \frac{2 \left\langle \mathring{h}, \nabla^2 H
\right\rangle}{W^{1 - \sigma}} + 2 \varepsilon f_{\sigma} W - 2 C_2
W^{\sigma} - (1 - \sigma) \frac{f_{\sigma} \Delta W}{W} - \frac{2 C_1 |
\nabla f_{\sigma} | \left| \nabla \mathring{h} \right|}{\normho} .
\end{eqnarray*}
Multiplying both sides of the above inequality by $f_{\sigma}^{p - 1}$, we get
\[ 2 \varepsilon f^p_{\sigma} W \leqslant f_{\sigma}^{p - 1} \Delta
f_{\sigma} + (1 - \sigma) \frac{f_{\sigma}^p \Delta W}{W} - \frac{2
f_{\sigma}^{p - 1} \left\langle \mathring{h}, \nabla^2 H
\right\rangle}{W^{1 - \sigma}} + \frac{2 C_1 f_{\sigma}^{p - 1} | \nabla
f_{\sigma} | \left| \nabla \mathring{h} \right|}{\normho} + 2 C_2
f_{\sigma}^{p - 1} W^{\sigma} . \]
The Young's inequality yields
\begin{eqnarray*}
2 C_2 f_{\sigma}^{p - 1} W^{\sigma} & \leqslant & 2 C_2 W \left[ \frac{1}{p}
\left( \frac{2 C_2}{\varepsilon W^{(1 - \sigma)}} \right)^p + \frac{p -
1}{p} \left( \frac{\varepsilon}{2 C_2} \right)^{\frac{p}{p - 1}}
f_{\sigma}^p \right]\\
& \leqslant & 2 C_2 \left( \frac{1}{p} \left( \frac{2 C_2}{\varepsilon^{(2
- \sigma)}} \right)^p + \frac{\varepsilon}{2 C_2} f_{\sigma}^p W \right)\\
& \leqslant & C_3^p + \varepsilon f_{\sigma}^p W,
\end{eqnarray*}
where $C_3$ is a positive constant depending on $M_0$. Thus
\begin{equation}
\varepsilon f^p_{\sigma} W \leqslant f_{\sigma}^{p - 1} \Delta f_{\sigma} +
(1 - \sigma) \frac{f_{\sigma}^p \Delta W}{W} - \frac{2 f_{\sigma}^{p - 1}
\left\langle \mathring{h}, \nabla^2 H \right\rangle}{W^{1 - \sigma}} +
\frac{2 C_1 f_{\sigma}^{p - 1} | \nabla f_{\sigma} | \left| \nabla
\mathring{h} \right|}{\normho} + C_3^p . \label{eC2f}
\end{equation}
Then integrate both side of (\ref{eC2f}) over $M_t$. By the divergence
theorem, we have
\begin{equation}
\int_{M_t} f_{\sigma}^{p - 1} \Delta f_{\sigma} \mathd \mu_t = - (p - 1)
\int_{M_t} f_{\sigma}^{p - 2} | \nabla f_{\sigma} |^2 \mathd \mu_t .
\label{eC2f1}
\end{equation}
From (\ref{aopaodfsdh2}), we have
\begin{eqnarray}
\int_{M_t} \frac{f_{\sigma}^p}{W} \Delta W \mathd \mu_t & = & - \int_{M_t}
\left\langle \nabla \left( \frac{f_{\sigma}^p}{W} \right), \nabla W
\right\rangle \mathd \mu_t \nonumber\\
& = & \int_{M_t} \left( - \frac{p f_{\sigma}^{p - 1}}{W} \langle \nabla
f_{\sigma}, \nabla W \rangle + \frac{f_{\sigma}^p}{W^2} | \nabla W |^2
\right) \mathd \mu_t \\
& \leqslant & \int_{M_t} \left( \frac{C_1 p f_{\sigma}^{p - 1}}{\normho} |
\nabla f_{\sigma} | \left| \nabla \mathring{h} \right| + \frac{C_1^2
f_{\sigma}^p}{\normho^2} \left| \nabla \mathring{h} \right|^2 \right) \mathd
\mu_t . \nonumber
\end{eqnarray}
We also have
\begin{eqnarray}
&  & - \int_{M_t} \frac{f_{\sigma}^{p - 1} \left\langle \mathring{h},
\nabla^2 H \right\rangle}{W^{1 - \sigma}} \mathd \mu_t \nonumber\\
& = & \int_{M_t} \nabla_i \left( \frac{f_{\sigma}^{p - 1}}{W^{1 - \sigma}}
\mathring{h}_{i j}^{\alpha} \right) \nabla_j H^{\alpha} \mathd \mu_t 
\nonumber\\
& = & \!\!\!\!\int_{M_t}\!\! \left[ \frac{(p - 1) f_{\sigma}^{p - 2}}{W^{1 - \sigma}}
\mathring{h}^{\alpha}_{i j} \nabla_i f_{\sigma}\! - \!\frac{(1 - \sigma)
f_{\sigma}^{p - 1}}{W^{2 - \sigma}}  \mathring{h}^{\alpha}_{i j} \nabla_i W
\!+\! \frac{f_{\sigma}^{p - 1}}{W^{1 - \sigma}} \nabla_i
\mathring{h}^{\alpha}_{i j} \right]\!\! \nabla_j H^{\alpha} \mathd \mu_t
\nonumber\\
& \leqslant & \int_{M_t} \left[ \frac{(p - 1) f_{\sigma}^{p - 1}}{\normho}
| \nabla f_{\sigma} | + \frac{f_{\sigma}^{p - 1}}{W^{2 - \sigma}}  \normho |
\nabla W | + \frac{f_{\sigma}^{p - 1}}{W^{1 - \sigma}} n \left| \nabla
\mathring{h} \right| \right] | \nabla H | \mathd \mu_t \label{eC2f2}\\
& \leqslant & \int_{M_t} \left[ \frac{(p - 1) f_{\sigma}^{p - 1}}{\normho}
| \nabla f_{\sigma} | + \frac{C_1 f_{\sigma}^{p - 1}}{W^{1 - \sigma}} 
\normho \left| \nabla \mathring{h} \right| + \frac{f_{\sigma}^{p - 1}}{W^{1
- \sigma}} n \left| \nabla \mathring{h} \right| \right] n \left| \nabla
\mathring{h} \right| \mathd \mu_t \nonumber\\
& \leqslant & \int_{M_t} \left[ \frac{n (p - 1) f_{\sigma}^{p -
1}}{\normho} | \nabla f_{\sigma} | \left| \nabla \mathring{h} \right| +
\frac{(C_1 + n^2) f_{\sigma}^p}{\normho^2} \left| \nabla \mathring{h}
\right|^2 \right] \mathd \mu_t . \nonumber
\end{eqnarray}
Putting (\ref{eC2f})-(\ref{eC2f2}) together, we get
\begin{equation}
\int_{M_t} W f_{\sigma}^p \mathd \mu_t \leqslant \frac{C_4}{\varepsilon}
\int_{M_t} \left[ \frac{p f_{\sigma}^{p - 1}}{\normho}  | \nabla f_{\sigma}
| \left| \nabla \mathring{h} \right| + \frac{f_{\sigma}^p}{\normho^2} 
\left| \nabla \mathring{h} \right|^2 + C_3^p \right] \mathd \mu_t,
\label{H2fsp}
\end{equation}
where $C_4$ is a positive constant independent of $t$.

Combining Lemma \ref{ptf} and (\ref{H2fsp}), we obtain
\begin{eqnarray}
\frac{\mathd}{\mathd t} \int_{M_t} f_{\sigma}^p \mathd \mu_t & = & p
\int_{M_t} f_{\sigma}^{p - 1} \frac{\partial f_{\sigma}}{\partial t} \mathd
\mu_t - \int_{M_t} f_{\sigma}^p | H |^2 \mathd \mu_t \nonumber\\
& \leqslant & p \int_{M_t} f_{\sigma}^{p - 2} \left[ - (p - 1) | \nabla
f_{\sigma} |^2 + \left( 2 C_1 + \frac{4 n \sigma C_4 p}{\varepsilon^2}
\right) \frac{f_{\sigma}}{\normho} | \nabla f_{\sigma} | \left| \nabla
\mathring{h} \right| \right. \nonumber\\
&  & \left. - \left( \varepsilon - \frac{4 n \sigma C_4}{\varepsilon^2}
\right) \frac{f_{\sigma}^2}{\normho^2} \left| \nabla \mathring{h} \right|^2
\right] \mathd \mu_t  \label{dtint}\\
&  & + p \int_{M_t} f_{\sigma}^p \left( \frac{4 n \sigma C_4
C_3^p}{\varepsilon^2} + \chi - 2 \varepsilon \right) \mathd \mu_t .
\nonumber
\end{eqnarray}

Now we show that the $L^p$-norm of $f_{\sigma}$ decays exponentially.

\begin{lemma}
\label{pnorm}There exist positive constants $C_5, p_0, \sigma_0$ depending
on $M_0$, such that for all $p \geqslant p_0$ and $\sigma \leqslant \sigma_0
/ \sqrt{p}$, we have
\[ \left( \int_{M_t} f_{\sigma}^p \mathd \mu_t \right)^{\frac{1}{p}} < C_5
\mathe^{- \varepsilon t} . \]
\end{lemma}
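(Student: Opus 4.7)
The plan is to transform (\ref{dtint}) into a scalar ODE inequality of the form $u'(t) \leq -\varepsilon p\, u(t) + D^p \operatorname{Vol}(M_t)$ for $u(t) := \int_{M_t} f_\sigma^p \, \mathd\mu_t$, and then conclude by Gronwall's inequality combined with the non-increase of volume under the flow.

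The first task is to absorb the mixed gradient term $A\,\tfrac{f_\sigma^{p-1}}{\normho}|\nabla f_\sigma||\nabla\mathring{h}|$ in (\ref{dtint}), where $A:=2C_1+\tfrac{4n\sigma C_4 p}{\varepsilon^2}$. I would use Cauchy--Schwarz in the form
\[ A\,\tfrac{f_\sigma^{p-1}}{\normho}|\nabla f_\sigma||\nabla\mathring h| \leq \tfrac{p-1}{2} f_\sigma^{p-2}|\nabla f_\sigma|^2 + \tfrac{A^2}{2(p-1)} \tfrac{f_\sigma^p}{\normho^2}|\nabla\mathring h|^2, \]
so that after combining with the other two quadratic gradient terms in (\ref{dtint}) a cushion of order $-\tfrac{p(p-1)}{2}\int f_\sigma^{p-2}|\nabla f_\sigma|^2 - \tfrac{\varepsilon p}{2}\int f_\sigma^p\normho^{-2}|\nabla\mathring h|^2$ remains. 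This is where the constraint $\sigma \leq \sigma_0/\sqrt{p}$ enters decisively: it forces $A=O(\sqrt{p})$, hence $A^2/(p-1)=O(1)$, which is strictly dominated by $B:=\varepsilon-\tfrac{4n\sigma C_4}{\varepsilon^2}\approx\varepsilon$ once $\sigma_0$ is chosen small enough.

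Next, I treat the coefficient $\chi-2\varepsilon$ of $\int f_\sigma^p$. When $\chi=0$, this coefficient is already $\leq-\varepsilon$ after absorbing the $\tfrac{4n\sigma C_4 C_3^p}{\varepsilon^2}$ correction via $\sigma\leq\sigma_0/\sqrt p$. When $\chi>0$ (case $q\ge n-4$), I would exploit the preserved pointwise bound $W>\varepsilon$ to estimate $\chi\, u(t) \leq \tfrac{\chi}{\varepsilon}\int W f_\sigma^p\,\mathd\mu_t$, and reapply (\ref{H2fsp}) to rewrite the right-hand side as gradient-quadratic integrals (absorbed into half of the step-1 cushion) plus a constant multiple of $C_3^p\operatorname{Vol}(M_t)$. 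Collecting terms, we arrive at
\[ u'(t) \leq -\varepsilon p\, u(t) + D^p \operatorname{Vol}(M_t) \]
for an absolute constant $D=D(M_0)$. Since $\operatorname{Vol}(M_t)$ is non-increasing along mean curvature flow, Gronwall yields $u(t) \leq e^{-\varepsilon p t}u(0) + D^p\operatorname{Vol}(M_0)/(\varepsilon p)$, and taking $p$-th roots together with $u(0)^{1/p}\leq\operatorname{Vol}(M_0)^{1/p}\|f_\sigma(0)\|_\infty$ (bounded uniformly in $p$ by the initial pinching) gives the claimed decay for a suitable choice of $C_5$ and $p_0$.

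The main obstacle will be the bookkeeping in the case $\chi>0$: the second invocation of (\ref{H2fsp}) requires partitioning the gradient cushion from step 1 into pieces that simultaneously absorb the $\chi$-term, the $\tfrac{4n\sigma C_4 C_3^p}{\varepsilon^2}$ correction, and any remaining mixed gradient term, while keeping $D$ independent of $p$ so that the additive Gronwall error is dominated by the exponentially decaying main term after taking $p$-th roots. The scaling $\sigma\leq\sigma_0/\sqrt p$ is tight enough for this balance, but the constants must be tracked through all three Young/Cauchy--Schwarz applications.
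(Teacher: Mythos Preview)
Your treatment of the case $\chi=0$ is essentially the paper's: the Cauchy--Schwarz splitting of the mixed term is equivalent to the paper's discriminant condition on the quadratic form in (\ref{dtint}), and both require exactly the balance $A^2=O(p)$ that the hypothesis $\sigma\le\sigma_0/\sqrt{p}$ provides. From there one obtains $u'\le -p\varepsilon\,u$ and integrates directly.

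The gap is in your handling of $\chi>0$. When you bound $p\chi\int f_\sigma^p\le \tfrac{p\chi}{\varepsilon}\int Wf_\sigma^p$ and reapply (\ref{H2fsp}), the resulting gradient terms carry coefficients
\[
\frac{p^2\chi C_4}{\varepsilon^2}\ \text{on}\ \frac{f_\sigma^{p-1}}{\normho}|\nabla f_\sigma||\nabla\mathring h|
\quad\text{and}\quad
\frac{p\chi C_4}{\varepsilon^2}\ \text{on}\ \frac{f_\sigma^{p}}{\normho^2}|\nabla\mathring h|^2.
\]
But the cushion left over from your first step is only of order $\tfrac{p(p-1)}{2}$ for the $|\nabla f_\sigma|^2$ term and $\tfrac{\varepsilon p}{2}$ for the $|\nabla\mathring h|^2$ term. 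Since $\chi=5n$ is a fixed large constant while $\varepsilon$ is small, the inequality $\tfrac{p\chi C_4}{\varepsilon^2}\le\tfrac{\varepsilon p}{4}$ needed to absorb the second term fails outright, and routing the mixed term through Cauchy--Schwarz only makes this worse (it produces a contribution of order $p^2/\varepsilon^4$ to the $|\nabla\mathring h|^2$ coefficient). So the second invocation of (\ref{H2fsp}) cannot be closed, and your ODE $u'\le -\varepsilon p\,u+D^p\operatorname{Vol}(M_t)$ is not reached. Even if it were, your final sentence is not right: the additive Gronwall error $(D^p\operatorname{Vol}(M_0)/(\varepsilon p))^{1/p}\approx D$ does \emph{not} decay in $t$ and is not dominated by $e^{-\varepsilon t}u(0)^{1/p}$ for large $t$.

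The paper avoids all of this by not fighting the constant $\chi$ at all. It simply keeps the inequality $u'\le p(\chi-\varepsilon)u$ as is, and then invokes the separate lemma (proved just before Lemma~\ref{ptf3}) that when $q\ge n-4$ the maximal time $T$ is finite. Hence $u(t)\le e^{p(5n-\varepsilon)T}u(0)$, so $u(t)^{1/p}$ is uniformly bounded on $[0,T)$; since $e^{-\varepsilon t}\ge e^{-\varepsilon T}>0$ on this finite interval, the bound can be rewritten as $u(t)^{1/p}\le C_5 e^{-\varepsilon t}$ for a suitable $C_5$ depending on $M_0$ and $T$. The finiteness of $T$ is the missing ingredient in your argument for $\chi>0$.
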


\begin{proof}
The expression in the square bracket of the right hand side of (\ref{dtint})
is a quadratic polynomial. With $p_0$ large enough and $\sigma_0$ small
enough, its discriminant satisfies $\left( 2 C_1 + \frac{4 n \sigma C_4
p}{\varepsilon^2} \right)^2 - 4 (p - 1) \left( \varepsilon - \frac{4 n
\sigma C_4}{\varepsilon^2} \right) < 0$. We also have $\frac{4 n \sigma C_4
C_3^p}{\varepsilon^2} < \varepsilon$. Then we get
\[ \frac{\mathd}{\mathd t} \int_{M_t} f_{\sigma}^p \mathd \mu_t \leqslant p
(\chi - \varepsilon) \int_{M_t} f_{\sigma}^p \mathd \mu_t . \]

Since $\chi = 5 n \frac{\max \{ q - n + 5, 0 \}}{q - n + 5}$, we have
$\int_{M_t} f_{\sigma}^p \mathd \mu_t \leqslant \mathe^{- p \varepsilon t}
\int_{M_0} f_{\sigma}^p \mathd \mu_0$ for $q < n - 4$, and $\int_{M_t}
f_{\sigma}^p \mathd \mu_t \leqslant \mathe^{5 n p T} \int_{M_0} f_{\sigma}^p
\mathd \mu_0$ for $q \geqslant n - 4$. Noting that $T$ is finite if $q
\geqslant n - 4$, we obtain the conclusion.
\end{proof}

Let $g_{\sigma} = f_{\sigma} \mathe^{\varepsilon t / 2}$. By the Sobolev
inequality on submanifolds {\cite{MR0365424}} and a Stampacchia iteration
procedure, we obtain that $g_{\sigma}$ is uniformly bounded for all $t$ (see
{\cite{MR772132}} or {\cite{lei2014sharp}} for the details). Then we obtain
the following theorem.

\begin{theorem}
\label{sa0h2}There exist positive constants $ \sigma $ and $C_0$
depending only on $M_0$, such that for all $t \in [0, T)$ we have
\[ \normho^2 \leqslant C_0  (| H |^2 + 1)^{1 - \sigma} \mathe^{- \varepsilon
t / 2} . \]
\end{theorem}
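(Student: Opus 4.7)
The plan is to upgrade the uniform $L^p$ bound on $f_\sigma$ provided by Lemma \ref{pnorm} to a uniform $L^\infty$ bound via the hinted Stampacchia/Moser iteration based on the Michael--Simon Sobolev inequality, in the spirit of Huisken \cite{MR772132} and of the authors' earlier work \cite{lei2014sharp}. First I would set $g_\sigma := f_\sigma \mathe^{\varepsilon t/2}$. Multiplying the parabolic inequality in Lemma \ref{ptf} by $\mathe^{\varepsilon t/2}$ and absorbing the $\tfrac{\varepsilon}{2} g_\sigma$ arising from $\dt \mathe^{\varepsilon t/2}$ into the $(\chi - 2\varepsilon) f_\sigma$ term yields
\[ \dt g_\sigma \leq \Delta g_\sigma + \frac{2C_1}{\normho} |\nabla g_\sigma| \left| \nabla \mathring{h} \right| - \frac{\varepsilon g_\sigma}{\normho^2} \left| \nabla \mathring{h} \right|^2 + \frac{4 n \sigma}{\varepsilon} g_\sigma W + \bigl( \chi - \tfrac{3\varepsilon}{2} \bigr) g_\sigma. \]
Lemma \ref{pnorm} translates directly into a $t$-uniform bound $\|g_\sigma\|_{L^p(M_t)} \leq C_5$ for all admissible $p$ and $\sigma$, which is the $L^p$ input for the iteration.

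Next, for a threshold $k$ to be chosen large, I would test the above inequality against $(g_\sigma - k)_+^{p-1}$ and integrate by parts on $M_t$, applying Young's inequality to absorb the mixed term $|\nabla g_\sigma|\,\bigl|\nabla\mathring{h}\bigr|/\normho$ into the good diffusion and $\bigl|\nabla\mathring{h}\bigr|^2$ terms, exactly as was done in deriving \eqref{H2fsp}. The resulting energy estimate has the form
\[ \frac{\mathd}{\mathd t} \int_{M_t} (g_\sigma - k)_+^{p} \mathd \mu_t + c_1 \int_{M_t} \bigl| \nabla (g_\sigma - k)_+^{p/2} \bigr|^2 \mathd \mu_t \leq C(p) \int_{A_k(t)} W \, g_\sigma^{p} \mathd \mu_t, \]
where $A_k(t) := \{ g_\sigma \geq k \} \cap M_t$. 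Since $W \leq n(|H|^2 + 1)$, and by the Michael--Simon Sobolev inequality \cite{MR0365424} applied to $w = (g_\sigma - k)_+^{p/2}$, the $|H|^2 w^2$ contribution can be absorbed by the gradient term on the left once $k$ is chosen so large that the measure of $A_k(t)$ is uniformly small in $t$, which is guaranteed by the $L^p$ bound on $g_\sigma$. This yields the standard Stampacchia reverse-Hölder relation between $\int (g_\sigma - k)_+^p$ and $\int (g_\sigma - k)_+^{np/(n-2)}$.

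Iterating this relation along geometrically increasing levels $k_j$ and exponents $p_j \nearrow \infty$ in the Moser scheme produces a bound $\|g_\sigma\|_{L^\infty(M_t)} \leq C_0'$ independent of $t$, and hence $f_\sigma \leq C_0' \mathe^{-\varepsilon t/2}$. Using $W^{1-\sigma} \leq \bigl( n(|H|^2 + 1) \bigr)^{1-\sigma}$ gives
\[ \normho^2 = f_\sigma\, W^{1-\sigma} \leq C_0\, (|H|^2 + 1)^{1-\sigma} \mathe^{-\varepsilon t/2}, \]
which is the assertion. The main obstacle is the technical bookkeeping of the iteration: one must choose $\sigma \sim 1/\sqrt{p_0}$ consistently with the admissible range in Lemma \ref{pnorm}, track that the $p$-dependent constants $C(p)$ grow tamely enough for Moser iteration to converge, and verify that the Michael--Simon absorption of the mean-curvature term is uniform in $t$. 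Once these ingredients are in place, the exponential decay factor $\mathe^{-\varepsilon t/2}$ passes through the iteration unchanged, so no further work is needed to produce the decay in time.
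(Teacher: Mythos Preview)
Your proposal is correct and takes essentially the same approach as the paper: the paper defines $g_\sigma = f_\sigma \mathe^{\varepsilon t/2}$, invokes the Michael--Simon Sobolev inequality together with a Stampacchia iteration (citing \cite{MR772132} and \cite{lei2014sharp} for details) to get $g_\sigma$ uniformly bounded, and then reads off the claimed estimate from $\normho^2 = f_\sigma W^{1-\sigma}$ and $W \leq \tfrac{|H|^2}{n(n-1)} + n$. Your sketch in fact supplies more of the iteration mechanics than the paper itself does.
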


\

\section{A gradient estimate}

In the following, we derive an estimate for $| \nabla H |^2$ along the mean
curvature flow.

Firstly, the same as Proposition 4.3 in {\cite{PiSi2015}}, we have the
following result.

\begin{lemma}
\label{dtdH2}There exists a constant $C_6 > 1$ depending only on $n$, such
that
\[ \dt | \nabla H |^2 \leqslant \Delta | \nabla H |^2 + C_6 (| H |^2 + 1) |
\nabla h |^2 . \]
\end{lemma}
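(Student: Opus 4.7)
The plan is to derive the evolution equation for $|\nabla H|^2$ term by term, and then show that every "bad" term is bounded by $C_6(|H|^2 + 1)|\nabla h|^2$ up to the good Laplacian term. Since the statement is already known in the codimension-one setting (and cited as Proposition 4.3 of \cite{PiSi2015}), the strategy is to check that the same argument goes through in arbitrary codimension in $\mathbb{CP}^m$.

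First, I would compute $\partial_t \nabla_k H^\alpha$. Starting from $\partial_t H = \Delta H + \text{(curvature and } h, H \text{ terms)}$ and commuting $\partial_t$ with $\nabla_k$, one picks up the time derivatives of the tangent and normal connections; these produce schematically terms of the form $h * \nabla h * H$, $h * h * \nabla H$, along with contributions of the ambient curvature $\bar R$ and its derivative $\bar\nabla \bar R$ applied to $\nabla h$ and $h$. In $\mathbb{CP}^m$, both $\bar R$ and $\bar\nabla \bar R$ are bounded by constants depending only on $n$ (because $\mathbb{CP}^m$ is a locally symmetric space with parallel curvature; equivalently, by formula~(\ref{CPcurv}) everything is expressed in terms of the parallel tensors $g$ and $J$).

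Next, I would use the standard Simons-type identity $\nabla_k \Delta H^\alpha = \Delta \nabla_k H^\alpha + \mathrm{Rm} * \nabla H + \bar R * \nabla H + (\bar\nabla\bar R) * H$, where the intrinsic curvature $\mathrm{Rm}$ is controlled by the Gauss equation via $|\mathrm{Rm}| \leqslant C(|h|^2 + 1)$. Combining these with
\[
\left(\dt - \Delta\right)|\nabla H|^2 = -2|\nabla^2 H|^2 + 2\left\langle (\dt - \Delta)\nabla H,\, \nabla H\right\rangle,
\]
I would drop $-2|\nabla^2 H|^2 \leqslant 0$ and bound every remaining term using Cauchy--Schwarz, the inequality $|\nabla H|^2 \leqslant n|\nabla h|^2$ (from the Codazzi equation together with $\nabla_i H^\alpha = \sum_k \nabla_i h^\alpha_{kk}$ plus the bounded correction coming from $\bar R$), and the trivial bound $|H||\nabla h| \leqslant \tfrac12(|H|^2 + 1)|\nabla h|^2 + \tfrac12|\nabla h|^2$.

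The main obstacle, compared to the hypersurface case, is the bookkeeping of the extra Kähler terms coming from $J$ in (\ref{CPcurv}): these contribute products like $J * J * h * \nabla H$ and $J * J * \nabla h$ in the evolution. Since $|J|$ is a universal constant and $J$ is parallel ($\bar\nabla J = 0$, so no $\bar\nabla \bar R$ terms truly appear beyond what is already present in the hypersurface case), each such term is of the form $\text{const}\cdot|h||\nabla H|$ or $\text{const}\cdot|\nabla h|$, and is therefore absorbed into $C_6(|H|^2 + 1)|\nabla h|^2$ by Young's inequality. Collecting constants yields the required dimensional $C_6$.
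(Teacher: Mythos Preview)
Your overall strategy is correct and matches what Pipoli--Sinestrari do in the cited Proposition~4.3. However, one essential ingredient is missing from your toolkit. Among the reaction terms you correctly identify is $h * h * \nabla H$; after contracting with $\nabla H$ this contributes $|h|^2|\nabla H|^2$, and the tools you list (Cauchy--Schwarz, $|\nabla H|^2 \leqslant n|\nabla h|^2$, Young's inequality) only reduce this to $n|h|^2|\nabla h|^2$. To obtain the bound $C_6(|H|^2+1)|\nabla h|^2$ with $C_6$ depending only on $n$, you must invoke the \emph{preserved pinching condition}: in each of the three regimes the paper has already established $\normho^2 < W \leqslant \tfrac{|H|^2}{n(n-1)} + n$, hence $|h|^2 \leqslant \tfrac{1}{n-1}|H|^2 + n$. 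This is precisely why the lemma is placed after Section~3, and it is how the corresponding step is closed in \cite{PiSi2015}. The same remark applies to the K\"ahler terms you discuss at the end: controlling $\text{const}\cdot|h||\nabla H|$ by $(|H|^2+1)|\nabla h|^2$ again requires $|h| \leqslant C_n\sqrt{|H|^2+1}$ from the pinching, not merely the boundedness and parallelism of $J$.

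A minor point: your ``trivial bound'' $|H||\nabla h| \leqslant \tfrac12(|H|^2 + 1)|\nabla h|^2 + \tfrac12|\nabla h|^2$ is dimensionally inconsistent (the two sides have different homogeneity in $|\nabla h|$) and is false as written. You presumably intend something like $|H|\cdot|\nabla h|^2 \leqslant \tfrac12(|H|^2+1)|\nabla h|^2$, which follows from $|H| \leqslant \tfrac12(|H|^2+1)$; just make sure the expressions you are actually estimating carry the correct power of $|\nabla h|$ before applying Young's inequality.
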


Secondly, we need the following estimates.

\begin{lemma}
\label{threedtlap}Along the mean curvature flow, we have
\begin{enumerateroman}
\item $\dt | H |^4 \geqslant \Delta | H |^4 - 24 n | H |^2 | \nabla h |^2
+ \frac{4}{n} | H |^6$,

\item $\dt \normho^2 \leqslant \Delta \normho^2 - \frac{1}{3} | \nabla h
|^2 + C_7 (| H |^2 + 1) \normho^2$,

\item $\dt \left( | H |^2 \normho^2 \right) \leqslant \Delta \left( | H
|^2 \normho^2 \right) - \frac{1}{6} | H |^2 | \nabla h |^2 + C_9 | \nabla
h |^2 + C_8 (| H |^2 + 1)^2 \normho^2$,
\end{enumerateroman}
where $C_7, C_8, C_9$ are sufficiently large constants.
\end{lemma}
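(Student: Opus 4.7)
The plan is to prove each of the three inequalities by applying $\partial_t - \Delta$ to the relevant function via the product rule, substituting the evolution formulas from Lemma \ref{evoinCP}, and controlling the resulting algebraic curvature terms using the codimension-specific estimates (Lemmas \ref{nqineq}, \ref{nqineq2}) together with the preserved pinching condition $\normho^2 < W$ from Section 3. For (i), I would compute $(\partial_t - \Delta)|H|^4 = 2|H|^2(\partial_t - \Delta)|H|^2 - 2|\nabla|H|^2|^2$ and substitute the evolution of $|H|^2$; the Kato-type bound $|\nabla|H|^2|^2 \le 4|H|^2|\nabla H|^2$ together with $|\nabla H|^2 \le n|\nabla h|^2$ yields the loss $-24n|H|^2|\nabla h|^2$, while the positive term $\tfrac{4}{n}|H|^6$ comes from the Cauchy--Schwarz bound $R_2 \ge \tfrac{1}{n}|H|^4$ (applied to $R_2 = \sum_{i,j}(\sum_\alpha H^\alpha h^\alpha_{ij})^2$), and the nonnegative contributions $4n|H|^4$ and $12|H|^2 S_2$ are simply discarded.

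For (ii), the starting point is $(\partial_t - \Delta)\normho^2 = -2|\nabla\mathring h|^2 - 2n\normho^2 + 2R_1 - \tfrac{2}{n}R_2 + 2S_1 - \tfrac{6}{n}S_2$. The algebraic part is bounded, via Lemmas \ref{nqineq}/\ref{nqineq2}, by $2R_1 - \tfrac{2}{n}R_2 \le 2|h|^2\normho^2 + (\text{lower order})$ and $2S_1 - \tfrac{6}{n}S_2 \le C\normho^2$; under pinching, $|h|^2 \le W + \tfrac{1}{n}|H|^2 = O(|H|^2+1)$, so the total is $\le C_7(|H|^2+1)\normho^2$. For the gradient, rewrite $-2|\nabla\mathring h|^2 = -\tfrac{1}{3}|\nabla h|^2 - \bigl(\tfrac{5}{3}|\nabla h|^2 - \tfrac{2}{n}|\nabla H|^2\bigr)$; the bracket is nonnegative by Lemma \ref{dA2}, which gives $|\nabla H|^2 \le \tfrac{n+c}{3}|\nabla h|^2$ with $c=2$ or $c=8$, in all the dimensional ranges of Theorem \ref{theoh}.

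For (iii), expand $(\partial_t - \Delta)(|H|^2\normho^2) = |H|^2(\partial_t - \Delta)\normho^2 + \normho^2(\partial_t - \Delta)|H|^2 - 2\langle\nabla|H|^2, \nabla\normho^2\rangle$; apply (ii) to the first summand (contributing $-\tfrac{1}{3}|H|^2|\nabla h|^2 + C|H|^2(|H|^2+1)\normho^2$) and Lemma \ref{evoinCP} with pinching to the second (contributing $-2\normho^2|\nabla H|^2 + C(|H|^2+1)^2\normho^2$). The cross term is bounded by the Kato--Young estimate $|-2\langle\nabla|H|^2,\nabla\normho^2\rangle| \le \tfrac{4}{\epsilon}\normho^2|\nabla H|^2 + 4\epsilon|H|^2|\nabla\mathring h|^2$, and $\epsilon$ is chosen small enough that $4\epsilon|H|^2|\nabla\mathring h|^2 \le \tfrac{1}{6}|H|^2|\nabla h|^2$, so combining with part (ii) produces $-\tfrac{1}{6}|H|^2|\nabla h|^2$. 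The residual $\bigl(\tfrac{4}{\epsilon}-2\bigr)\normho^2|\nabla H|^2$ is estimated using $|\nabla H|^2 \le n|\nabla h|^2$ and the pinching $\normho^2 \le \tfrac{|H|^2}{n(n-1)} + O(1)$, splitting it into an $O(|H|^2|\nabla h|^2/(n-1))$ piece reabsorbed into the $\tfrac{1}{6}$ margin and an $O(|\nabla h|^2)$ piece going into $C_9|\nabla h|^2$. The main obstacle is precisely this delicate balance in (iii): the cross term unavoidably produces $\normho^2|\nabla H|^2$, which must be routed through the specific pinching shape $\normho^2 \le |H|^2/(n(n-1)) + O(1)$ in order to be absorbed into $C_9|\nabla h|^2 + C_8(|H|^2+1)^2\normho^2$ without spoiling the $-\tfrac{1}{6}|H|^2|\nabla h|^2$ coefficient; constants $C_7, C_8, C_9$ are then chosen sufficiently large to accommodate all accumulated contributions.
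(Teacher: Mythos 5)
Parts (i) and (ii) of your proposal are correct and essentially the paper's own argument. The genuine gap is in part (iii), in your treatment of the cross term $-2\langle\nabla|H|^2,\nabla\normho^2\rangle$. Your Young splitting gives $\tfrac{4}{\epsilon}\normho^2|\nabla H|^2+4\epsilon|H|^2\left|\nabla\mathring{h}\right|^2$, and you spend the entire available margin (the difference between the $-\tfrac13$ coming from part (ii) and the target $-\tfrac16$) on the second summand by taking $4\epsilon\leqslant\tfrac16$; there is then nothing left with which to ``reabsorb'' the $|H|^2|\nabla h|^2$ part of the residual $\left(\tfrac4\epsilon-2\right)\normho^2|\nabla H|^2$. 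Even if you split the margin between the two contributions, the bookkeeping cannot close: the preserved pinching only gives $\normho^2\leqslant\tfrac{|H|^2}{n(n-1)}+O(1)$, so the residual contributes a term of size about $\tfrac{4/\epsilon-2}{n-1}|H|^2|\nabla h|^2$, and even optimizing $\epsilon$ and using the sharper $|\nabla H|^2\leqslant\tfrac{n+8}{3}|\nabla h|^2$ one cannot get the total coefficient of $|H|^2|\nabla h|^2$ below roughly $\tfrac{8}{\sqrt{3(n-1)}}$, which exceeds the available margin $\tfrac16$ unless $n$ is of the order of several hundred; the theorem, however, covers $n$ as small as $3$ (resp.\ $6$). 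So with only the pinching condition the constants do not work in the stated dimension ranges.

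The missing ingredient is that the paper does not estimate the cross term by the pinching inequality but by the decay estimate of Theorem \ref{sa0h2}: $\normho^2\leqslant C_0(|H|^2+1)^{1-\sigma}\mathe^{-\varepsilon t/2}$, whose strictly positive exponent gap $\sigma$ is precisely what makes the absorption possible. One bounds $-2\langle\nabla|H|^2,\nabla\normho^2\rangle\leqslant 8|H||\nabla H|\,\normho\,|\nabla h|\leqslant 8n\sqrt{C_0}\,|H|(|H|^2+1)^{(1-\sigma)/2}|\nabla h|^2$, and since the power of $|H|$ here is $2-\sigma<2$, Young's inequality gives $-2\langle\nabla|H|^2,\nabla\normho^2\rangle\leqslant\left(C_9+\tfrac16|H|^2\right)|\nabla h|^2$ with the coefficient of $|H|^2|\nabla h|^2$ as small as one likes, all the excess being dumped into the constant $C_9$. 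Without invoking Theorem \ref{sa0h2} (or some comparable improvement beyond the bare pinching), the coefficient $-\tfrac16$ in (iii) cannot be reached; this is the step your proposal needs to add.
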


\begin{proof}
(i) From Lemma \ref{evoinCP} (ii) we derive that
\[ \dt | H |^4 = \Delta | H |^4 - 4 | H |^2 | \nabla H |^2 - 2 | \nabla | H
|^2 |^2 + 4 | H |^2 (R_2 + 3 S_2) + 4 n | H |^4 . \]
From Lemma \ref{nqineq2}, we get $R_2 + 3 S_2 \geqslant R_2 \geqslant
\frac{1}{n} | H |^4$. Then from Lemma \ref{dA2}, we have $4 | H |^2 | \nabla
H |^2 + 2 | \nabla | H |^2 |^2 \leqslant 24 n | H |^2 | \nabla h |^2$.

(ii) We have
\[ \dt \normho^2 = \Delta \normho^2 - 2 \left| \nabla \mathring{h} \right|^2
- 2 n \normho^2 + 2 R_1 - \frac{2}{n} R_2 + 2 S_1 - \frac{6}{n} S_2 . \]
From Lemma \ref{dA2}, we get $2 \left| \nabla \mathring{h} \right|^2
\geqslant \frac{1}{3} | \nabla h |^2$. Choosing a large constant $C_7$, we
obtain inequality (ii).

(iii) It follows from the evolution equations that
\begin{eqnarray*}
\dt \left( | H |^2 \normho^2 \right) & = & \Delta \left( | H |^2 \normho^2
\right) + 2 | H |^2 \left( R_1 - \frac{R_2}{n} + S_1 - \frac{3 S_2}{n}
\right) + 2 \normho^2 (R_2 + 3 S_2)\\
&  & - 2 | H |^2 \left| \nabla \mathring{h} \right|^2 - 2 \normho^2 |
\nabla H |^2 - 2 \left\langle \nabla | H |^2, \nabla \normho^2
\right\rangle .
\end{eqnarray*}
From Lemma \ref{dA2}, we get $- 2 | H |^2 \left| \nabla \mathring{h}
\right|^2 \leqslant - \frac{1}{3} | H |^2 | \nabla h |^2$.

From the preserved pinching condition $\normho^2 < W$, we have
\[ 2 | H |^2 \left( R_1 - \frac{R_2}{n} + S_1 - \frac{3 S_2}{n} \right) + 2
\normho^2 (R_2 + 3 S_2) < C_8 (| H |^2 + 1)^2 \normho^2 . \]

Using Theorem \ref{sa0h2}, we have
\[ - 2 \left\langle \nabla | H |^2, \nabla \normho^2 \right\rangle \leqslant
8 | H |  | \nabla H |  \normho  | \nabla h | \leqslant 8 n \sqrt{C_0} | H
|  (| H |^2 + 1)^{\frac{1 - \sigma}{2}} | \nabla h |^2 . \]
By Young's inequality, there exists a positive constant $C_9$, such that $-
2 \left\langle \nabla | H |^2, \nabla \normho^2 \right\rangle \leqslant
\left( C_9 + \frac{1}{6}  | H |^2 \right) | \nabla h |^2$.
\end{proof}

Now we prove a gradient estimate for mean curvature.

\begin{theorem}
\label{dH2}For all $\eta \in \left( 0, \frac{\varepsilon^{1 / 2}}{8 n
\pi} \right)$, there exists a number $\Psi (\eta)$ depending on $\eta$
and $M_0$, such that
\[ | \nabla H |^2 < [(\eta | H |)^4 + \Psi (\eta)^2] \mathe^{- \varepsilon t
/ 4} . \]
\end{theorem}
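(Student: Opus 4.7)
The plan is to bound $|\nabla H|^2$ from above by a maximum principle argument applied to the auxiliary function
\[ G = \mathe^{\varepsilon t/4}\bigl[|\nabla H|^2 + A_\eta |H|^2 |\mathring{h}|^2 + B_\eta |\mathring{h}|^2\bigr] - \eta^4 |H|^4 - \Psi(\eta)^2, \]
where $A_\eta$, $B_\eta$, and $\Psi(\eta)$ are positive constants depending on $\eta$ and $M_0$ that will be chosen in the course of the argument. The target is $G \leq 0$ on $M \times [0,T)$; since the bracketed correction is nonnegative, this directly yields the claimed inequality $|\nabla H|^2 < [(\eta|H|)^4 + \Psi(\eta)^2]\mathe^{-\varepsilon t/4}$.

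The first step is to compute $(\partial_t - \Delta)G$ using Lemma \ref{dtdH2} for $|\nabla H|^2$, the three parts of Lemma \ref{threedtlap} for $|H|^4$, $|H|^2|\mathring{h}|^2$, and $|\mathring{h}|^2$, together with the extra $\tfrac{\varepsilon}{4}\mathe^{\varepsilon t/4}(\cdot)$ contribution from the exponential weight. The coefficient of $|\nabla h|^2$ on the right-hand side is
\[ \mathe^{\varepsilon t/4}\bigl[C_6(|H|^2+1) + A_\eta(C_9 - \tfrac{1}{6}|H|^2) - \tfrac{B_\eta}{3}\bigr] + 24n\eta^4 |H|^2, \]
the last summand arising because Lemma \ref{threedtlap}(i) is a lower bound on $(\partial_t - \Delta)|H|^4$, so subtracting $\eta^4$ of it contributes $+24n\eta^4|H|^2|\nabla h|^2 - \tfrac{4\eta^4}{n}|H|^6$. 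Choosing $A_\eta \geq 6C_6 + 144n\eta^4$ and $B_\eta \geq 3(C_6 + A_\eta C_9)$ renders this coefficient nonpositive for every $t \geq 0$, absorbing all $|\nabla h|^2$ terms.

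What remains is the drift $\tfrac{\varepsilon}{4}\mathe^{\varepsilon t/4}|\nabla H|^2$, several polynomial-in-$|H|$ multiples of $|\mathring{h}|^2$, and the dominant gain $-\tfrac{4\eta^4}{n}|H|^6$. Theorem \ref{sa0h2} gives $\mathe^{\varepsilon t/4}|\mathring{h}|^2 \leq C_0(|H|^2+1)^{1-\sigma}\mathe^{-\varepsilon t/4}$, so every $|\mathring{h}|^2$ contribution is bounded by $D_\eta (|H|^2+1)^{3-\sigma}\mathe^{-\varepsilon t/4}$, which is in turn swallowed by a fraction of $\tfrac{\eta^4}{n}|H|^6$ once $|H|^2$ exceeds a threshold $M_\eta$. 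For the drift, the key observation is that on the positivity set $\{G>0\}$ we have
\[ \mathe^{\varepsilon t/4}|\nabla H|^2 = G + \eta^4|H|^4 + \Psi^2 - A_\eta \mathe^{\varepsilon t/4}|H|^2|\mathring{h}|^2 - B_\eta \mathe^{\varepsilon t/4}|\mathring{h}|^2 \leq G + \eta^4|H|^4 + \Psi^2, \]
which converts $\tfrac{\varepsilon}{4}\mathe^{\varepsilon t/4}|\nabla H|^2$ into $\tfrac{\varepsilon}{4}(G + \eta^4|H|^4 + \Psi^2)$.

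Now assume for contradiction that $G$ becomes positive somewhere on $M \times [0,T)$, and pick a first-positive-max point $(x_0,t_0)$. At this point $(\partial_t - \Delta)G \geq 0$, whereas combining the previous estimates gives
\[ (\partial_t - \Delta)G \leq \tfrac{\varepsilon}{4}G + \tfrac{\varepsilon\eta^4}{4}|H|^4 + \tfrac{\varepsilon\Psi^2}{4} - \tfrac{3\eta^4}{n}|H|^6 \]
as soon as $|H|^2(x_0,t_0) \geq M_\eta$. The smallness hypothesis $\eta < \tfrac{\sqrt{\varepsilon}}{8n\pi}$ is invoked here to balance the $\mathe^{-\varepsilon t/4}$-weighted residuals and the coefficient $\varepsilon\eta^4/4$ against the $|H|^6$ gain so that the right-hand side is strictly negative, forcing $|H|^2(x_0,t_0)$ into a bounded interval $[0, M'_\eta]$. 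On this compact range $G$ itself is bounded by a quantity depending only on $\eta$ and $M_0$, which we absorb into $\Psi(\eta)^2$ (together with the requirement $\Psi(\eta)^2 \geq \max_{M_0}[|\nabla H|^2 + A_\eta|H|^2|\mathring{h}|^2 + B_\eta|\mathring{h}|^2]$ to kill the initial data), closing the contradiction. The main technical obstacle is exactly this last balancing between the exponential-weight drift and the $|H|^6$ gain, where the precise form of the smallness bound on $\eta$ is essential.
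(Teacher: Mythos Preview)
Your auxiliary function $G$ and the choice of $A_\eta,B_\eta$ to kill the $|\nabla h|^2$ coefficient are exactly the paper's construction (the paper calls them $B_2,B_1$ and does not subtract $\Psi^2$ in the definition, but this is cosmetic). The use of Theorem~\ref{sa0h2} to control all $|\mathring h|^2$ contributions by $D_\eta(|H|^2+1)^{3-\sigma}\mathe^{-\varepsilon t/4}$, and of the $-\tfrac{4\eta^4}{n}|H|^6$ term as a sink, also matches.

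The gap is your treatment of the drift term $\tfrac{\varepsilon}{4}\mathe^{\varepsilon t/4}|\nabla H|^2$. Substituting $\mathe^{\varepsilon t/4}|\nabla H|^2\le G+\eta^4|H|^4+\Psi^2$ puts $\tfrac{\varepsilon}{4}\Psi^2$ on the right-hand side of the evolution inequality. At a first-positive-max point this yields
\[
0\le(\partial_t-\Delta)G\le \tfrac{\varepsilon}{4}G+\tfrac{\varepsilon\eta^4}{4}|H|^4+\tfrac{\varepsilon}{4}\Psi^2-\tfrac{3\eta^4}{n}|H|^6,
\]
which only forces $|H|^2$ into an interval whose right endpoint depends on $\Psi$; you then propose to enlarge $\Psi$ to cover that interval, but the interval grows with $\Psi$, so the argument is circular and does not close. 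Equivalently, with the drift left as $\tfrac{\varepsilon}{4}G+\text{(positive constant)}$ the comparison ODE blows up like $\mathe^{\varepsilon t/4}$ and yields no bound. The fix is the one the paper uses: absorb $\tfrac{\varepsilon}{4}|\nabla H|^2$ directly into the $|\nabla h|^2$ bucket via the elementary inequality $|\nabla H|^2\le \tfrac{n+8}{3}|\nabla h|^2$ (Lemma~\ref{dA2}); this costs nothing since $B_\eta$ can be chosen with an extra margin. One then obtains $(\partial_t-\Delta)f\le \Psi_2(\eta)\mathe^{-\varepsilon t/4}$ with $\Psi_2$ independent of any yet-to-be-chosen constant, and a single integration of this against $\mathe^{-\varepsilon t/4}$ gives the uniform bound on $f$ that defines $\Psi(\eta)$.

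A second, smaller point: the restriction $\eta<\tfrac{\sqrt{\varepsilon}}{8n\pi}$ is not used in this proof. The argument works for every $\eta>0$; that smallness condition enters only later, in the Bonnet--Myers step of Lemma~\ref{ric}.
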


\begin{proof}
Define a scalar
\[ f = \left( | \nabla H |^2 + B_1  \normho^2 + B_2  | H |^2 \normho^2
\right) \mathe^{\varepsilon t / 4} - (\eta | H |)^4, \]
where $B_1, B_2$ are two positive constants.

From Lemmas \ref{dtdH2} and \ref{threedtlap}, we obtain
\begin{eqnarray*}
\left( \dt - \Delta \right) f & \leqslant & \left[ \mathe^{\varepsilon t /
4} \left( C_6 - \frac{B_2}{6} \right) + 24 n \eta^4 \right] | H |^2 |
\nabla h |^2\\
&  & + \mathe^{\varepsilon t / 4} \left[ \left( C_6 - \frac{B_1}{3} + B_2
C_9 \right) | \nabla h |^2 + \sigma | \nabla H |^2 \right]\\
&  & + \mathe^{\varepsilon t / 4} [B_1 C_7 (| H |^2 + 1) + B_2 C_8 (| H
|^2 + 1)^2 + \sigma (B_1 + B_2  | H |^2)] \normho^2\\
&  & - \frac{4 \eta^4}{n} | H |^6 .
\end{eqnarray*}
We choose the constants $B_1$ and $B_2$, such that $C_6 - \frac{B_2}{6} < -
1$ and $C_6 - \frac{B_1}{3} + B_2 C_9 < - 1$. Then applying Theorem
\ref{sa0h2}, we get
\begin{eqnarray}
\left( \dt - \Delta \right) f & \leqslant & \mathe^{- \varepsilon t / 4}
\left[ B_3 (| H |^2 + 1)^2 (| H |^2 + 1)^{1 - \sigma} - \frac{4 \eta^4}{n}
| H |^6 \right] .  \label{Paraf}
\end{eqnarray}
Consider the expression in the bracket of (\ref{Paraf}). Since the coefficient
of $| H |^6$ is negative, it has a upper bound $\Psi_2 (\eta)$. Then we have
$\left( \dt - \Delta \right) f \leqslant \Psi_2 (\eta) \mathe^{- \varepsilon
t / 4}$. It follows from the maximum principle that $f$ is bounded. This
completes the proof of Theorem \ref{dH2}.

\end{proof}

\section{Convergence}

In order to estimate the diameter of $M_t$, we need the well-known Myers
theorem.

\begin{theorem}
{\textbf{(Myers)}} Let $\Gamma$ be a geodesic of length $l$ on $M$. If the
Ricci curvature satisfies $\tmop{Ric} (X) \geqslant (n - 1)
\frac{\pi^2}{l^2}$, for each unit vector $X \in T_x M$, at any point $x \in
\Gamma$, then $\Gamma$ has conjugate points.
\end{theorem}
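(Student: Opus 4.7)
The plan is to prove this by the standard second variation of arc length. Let $\Gamma:[0,l]\to M$ be the unit speed geodesic, and extend a unit vector $v\in T_{\Gamma(0)}M$ perpendicular to $\Gamma'(0)$ by parallel transport along $\Gamma$ to obtain a parallel orthonormal frame $\{e_1,\ldots,e_{n-1}\}$ of $\Gamma'^{\perp}$ along the geodesic. I would then construct the test vector fields
\[
V_i(t) := \sin\!\bigl(\pi t/l\bigr)\, e_i(t), \qquad i=1,\ldots,n-1,
\]
each of which vanishes at both endpoints and hence corresponds to a proper variation of $\Gamma$ fixing its endpoints.

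Next I would evaluate the index form on each $V_i$. Since $e_i$ is parallel, $V_i' = (\pi/l)\cos(\pi t/l)\,e_i$, so the second variation formula gives
\[
I(V_i,V_i) = \int_0^l\!\left[\tfrac{\pi^2}{l^2}\cos^2\!\bigl(\tfrac{\pi t}{l}\bigr) - \sin^2\!\bigl(\tfrac{\pi t}{l}\bigr)\, \langle R(e_i,\Gamma')\Gamma',e_i\rangle\right]\mathd t.
\]
Summing over $i$ produces the Ricci tensor, and invoking the hypothesis $\mathrm{Ric}(\Gamma')\geqslant (n-1)\pi^2/l^2$ yields
\[
\sum_{i=1}^{n-1} I(V_i,V_i) \leqslant (n-1)\tfrac{\pi^2}{l^2}\int_0^l\!\bigl[\cos^2\!\bigl(\tfrac{\pi t}{l}\bigr)-\sin^2\!\bigl(\tfrac{\pi t}{l}\bigr)\bigr]\mathd t \;=\; 0.
\]
Consequently at least one $V_i$ satisfies $I(V_i,V_i)\leqslant 0$.

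Finally I would appeal to the Morse index theorem: if $\Gamma$ had no conjugate points on $[0,l]$, then the index form would be strictly positive definite on the space of piecewise smooth variation fields vanishing at both endpoints, contradicting $I(V_i,V_i)\leqslant 0$ for a nontrivial $V_i$. Hence $\Gamma$ must contain a pair of conjugate points. The main subtlety to handle carefully is the boundary case of equality in the index inequality, which is dealt with by a small perturbation argument (either replacing $\pi^2/l^2$ by a slightly larger constant arising from strict positivity of $\sin^2$ where Ricci strictly dominates, or by observing that equality forces the endpoint $\Gamma(l)$ itself to be conjugate to $\Gamma(0)$ via a nontrivial Jacobi field).
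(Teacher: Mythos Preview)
Your proof is correct and is the standard second-variation argument for Myers' theorem. Note, however, that the paper does not actually prove this statement: it is introduced as ``the well-known Myers theorem'' and invoked as a classical result without proof. So there is no paper proof to compare against; you have simply supplied the textbook argument that the authors assume the reader knows.

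One small remark on the equality case you flag at the end: the cleanest way to close it is to observe that if $\Gamma$ has no conjugate points on $(0,l]$, then the index form is strictly positive definite on nontrivial proper variation fields, which is already contradicted by $I(V_i,V_i)\leqslant 0$. No perturbation is needed.
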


Then we obtain the following lemma.

\begin{lemma}
\label{ric}Let $M$ be an n-dimensional submanifold in
$\mathbb{C}\mathbb{P}^{\frac{n + q}{2}}$. If there exist positive
constants $L$ and $\varepsilon$, $\varepsilon < \frac{1}{n (n - 1)}$, such
that $M$ satisfies $| h |^2 < \left( \frac{1}{n - 1} - \varepsilon \right) |
H |^2 + L$, $\max_M | H |^2 > 2 n L / \varepsilon$ and $| \nabla H | < 2
\eta^2 \max_M | H |^2$ for all $0 < \eta < \frac{\varepsilon^{1 / 2}}{8 n
\pi}$, then we have
\[ \frac{\min_M | H |^2}{\max_M | H |^2} > 1 - \eta \quad \tmop{and} \quad
\tmop{diam} M \leqslant (2 \eta | H |_{\max})^{- 1} . \]
\end{lemma}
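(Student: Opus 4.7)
The plan is to combine a uniform Ricci curvature lower bound with Myers' theorem, using the gradient control on $H$ in a bootstrap to close the loop between the two conclusions (the oscillation bound and the diameter bound, each of which appears to need the other).

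First, I would derive a pointwise Ricci lower bound. At a point, choose an orthonormal normal frame with $e_{n+1}=H/|H|$, so $H^\alpha = |H|\delta_{\alpha,n+1}$; then choose a tangent frame diagonalizing $\mathring h^{n+1}$. The Gauss equation and the Fubini--Study curvature tensor (\ref{CPcurv}) give, for any unit tangent $X$,
\begin{equation*}
\mathrm{Ric}(X,X) = (n-1) + 3|(JX)^T|^2 + \sum_\alpha H^\alpha h^\alpha_{XX} - \sum_{j,\alpha}(h^\alpha_{jX})^2.
\end{equation*}
Dropping the nonnegative $3|(JX)^T|^2$, expanding $h=\mathring h+\tfrac{1}{n}H\otimes g$, and collecting terms yields
\begin{equation*}
\mathrm{Ric}(X,X)\geq (n-1)+\tfrac{n-1}{n^2}|H|^2+\tfrac{n-2}{n}|H|\mathring h^{n+1}_{XX}-\sum_{j,\alpha}(\mathring h^\alpha_{jX})^2.
\end{equation*}
Using $|\mathring h^{n+1}_{XX}|\leq\sqrt{(n-1)/n}\,|\mathring h|$ (trace-free eigenvalues), $\sum_{j,\alpha}(\mathring h^\alpha_{jX})^2\leq|\mathring h|^2$, and the pinching $|\mathring h|^2<(\tfrac{1}{n(n-1)}-\varepsilon)|H|^2+L$, a Cauchy--Schwarz absorption together with the hypothesis $\max|H|^2>2nL/\varepsilon$ (which makes $L$ negligible compared with $\varepsilon|H|_{\max}^2$) produces, at every point where $|H|^2\geq\tfrac12|H|_{\max}^2$, a bound of the shape
\begin{equation*}
\mathrm{Ric}(X,X)\geq c\,\varepsilon\,|H|_{\max}^2,
\end{equation*}
for a positive constant $c=c(n)$.

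Second, I would run a bootstrap between the gradient estimate and Myers' theorem. The gradient hypothesis gives $|\nabla|H|^2|\leq 2|H|\,|\nabla H|<4\eta^2|H|_{\max}^3$, so along any minimizing geodesic of length $\ell$,
\begin{equation*}
\bigl||H|^2(p)-|H|^2(q)\bigr|\leq 4\eta^2|H|_{\max}^3\,\ell.
\end{equation*}
Thus once we know $\mathrm{diam}\,M\leq(2\eta|H|_{\max})^{-1}$, we obtain $|H|^2\geq(1-\eta)|H|_{\max}^2$ everywhere, which is the first conclusion; moreover, under this lower bound the Ricci estimate above holds everywhere, and Myers' theorem applied with $l=(2\eta|H|_{\max})^{-1}$ requires only that $\mathrm{Ric}\geq(n-1)\pi^2\cdot 4\eta^2|H|_{\max}^2$. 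Since $\eta<\varepsilon^{1/2}/(8n\pi)$ forces $4\pi^2(n-1)\eta^2<c\varepsilon(1-\eta)$, Myers closes the loop and gives the claimed diameter bound. The bootstrap is initialized by applying Myers only at the point where $|H|^2$ attains its maximum (where the Ricci bound is automatic from the pinching and the threshold $2nL/\varepsilon$), then extending by continuity of $|H|^2$ along geodesics emanating from that point.

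The main obstacle is precisely this circular dependence: the Ricci bound needs $|H|^2$ to be close to its maximum, which needs the diameter bound, which needs the Ricci bound. The quantitative gap afforded by $\eta<\varepsilon^{1/2}/(8n\pi)$ is what allows the bootstrap to converge; a looser gradient bound would not be enough. A secondary subtlety is ensuring all the algebraic manipulations in the Ricci lower bound absorb the $L$-term correctly, which requires using $L<\varepsilon|H|_{\max}^2/(2n)$ rather than treating $L$ as a harmless constant.
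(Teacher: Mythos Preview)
Your proposal is correct and follows essentially the same approach as the paper: derive a pointwise Ricci lower bound from the pinching (the paper cites this as an inequality of Shiohama--Xu rather than rederiving it from the Gauss equation), start at the point where $|H|$ is maximal, propagate control of $|H|^2$ along geodesics of length $l=(4\eta|H|_{\max})^{-1}$ via the gradient bound, and apply Myers' theorem to conclude those geodesics already cover $M$. Your ``bootstrap'' framing slightly overstates the difficulty---there is no genuine circularity, since what you call the ``initialization'' (the forward pass from the maximum point) is already the entire argument.
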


\begin{proof}
By Proposition 2 in {\cite{MR1458750}}, the Ricci curvature of $M$ satisfies
\begin{eqnarray*}
\tmop{Ric} & \geqslant & \tfrac{n - 1}{n} \left( n + \tfrac{2}{n} | H |^2
- | h |^2 - \tfrac{n - 2}{\sqrt{n (n - 1)}} | H | \normho \right)\\
& \geqslant & \tfrac{n - 1}{n} \left[ n + \tfrac{2}{n} | H |^2 - \left(
\tfrac{1}{n - 1} - \varepsilon \right) | H |^2 - L - \tfrac{n - 2}{\sqrt{n
(n - 1)}} | H | \sqrt{\tfrac{| H |^2}{n (n - 1)} + L} \right]\\
& \geqslant & \tfrac{n - 1}{n} \left[ n + \left( \tfrac{n - 2}{n (n - 1)}
+ \varepsilon \right) | H |^2 - L - (n - 2) \left( \tfrac{| H |^2}{n (n -
1)} + L \right) \right]\\
& \geqslant & \tfrac{n - 1}{n} [\varepsilon | H |^2 - n L] .
\end{eqnarray*}

Let $x$ be a point on $M$ where $| H |$ achieves its maximum. Consider all
the geodesics of length $l = (4 \eta \max_M | H |)^{- 1}$ starting from $x$.
Since $| \nabla | H |^2 | < 4 \eta^2 \max_M | H |^3$, we have $| H |^2 >
\max_M | H |^2 - 4 \eta^2 \max_M | H |^3 \cdot l = (1 - \eta) \max_M | H
|^2$ along such a geodesic. Thus we have $\tmop{Ric} > \tfrac{n - 1}{n}
[\varepsilon | H |^2 - n L] > \tfrac{n - 1}{n} \varepsilon \left(
\frac{1}{2} - \eta \right) \max_M | H |^2 > (n - 1) \pi^2 / l^2$ on such
a geodesic. Then from Myers' theorem, these geodesics can reach any point of
$M$.

Therefore, we obtain $\min_M | H |^2 > (1 - \eta) \max_M | H |^2$ and
$\tmop{diam} M \leqslant 2 l$.
\end{proof}

Now we show that under the assumption of Theorem \ref{theoh}, the mean
curvature flow converges to a point or a totally geodesic submanifold.

\begin{theorem}
\label{Tfin}If $M_0$ satisfies $\normho^2 < W$ and $T$ is finite, then $F_t$
converges to a round point as $t \rightarrow T$.
\end{theorem}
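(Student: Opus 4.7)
The plan is to combine the preserved pinching condition, the exponential decay of the traceless second fundamental form (Theorem \ref{sa0h2}), and the gradient estimate for the mean curvature (Theorem \ref{dH2}) to show that $M_t$ shrinks to a round point as $t \to T$.

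First, I would observe that since $T$ is finite, by standard short-time existence theory for mean curvature flow, $\max_{M_t} | h |^2$ must blow up as $t \to T$. From Theorem \ref{sa0h2} we have $\normho^2 \leqslant C_0 (| H |^2 + 1)^{1 - \sigma} \mathe^{- \varepsilon t / 2}$, and since $| h |^2 = \normho^2 + \tfrac{1}{n} | H |^2$, the blow-up of $| h |^2$ forces $\max_{M_t} | H |^2 \to \infty$. Moreover, the asymptotic behavior of $W$ (whose leading term in $| H |^2$ is $\tfrac{1}{n(n-1)}| H |^2$) together with Theorem \ref{sa0h2} shows that, once $| H |$ is large, the pinching implies an estimate of the form $| h |^2 < \bigl( \tfrac{1}{n-1} - \varepsilon' \bigr) | H |^2 + L$ for some constants $\varepsilon' > 0$ and $L > 0$ depending only on $M_0$; this is precisely the form required by Lemma \ref{ric}.

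Second, for a fixed small $\eta \in \bigl( 0, \varepsilon^{1/2}/(8 n \pi) \bigr)$, Theorem \ref{dH2} gives $| \nabla H |^2 < [(\eta | H |)^4 + \Psi(\eta)^2] \mathe^{- \varepsilon t /4}$. Once $\max_{M_t} | H |^2$ is sufficiently large (which happens as $t$ approaches $T$ by the first step), the term $\Psi(\eta)^2 \mathe^{- \varepsilon t /4}$ becomes negligible and we get $| \nabla H | < 2 \eta^2 \max_{M_t} | H |^2$. Likewise $\max_{M_t} | H |^2 > 2nL/\varepsilon$ for $t$ close to $T$. Thus all hypotheses of Lemma \ref{ric} are verified for $t$ near $T$, yielding
\[
\frac{\min_{M_t} | H |^2}{\max_{M_t} | H |^2} > 1 - \eta, \qquad \tmop{diam}(M_t) \leqslant (2 \eta | H |_{\max})^{-1}.
\]
Since $\eta$ can be made arbitrarily small while $| H |_{\max} \to \infty$, the diameter of $M_t$ shrinks to zero, and the mean curvature becomes asymptotically constant on each time slice; in addition, Theorem \ref{sa0h2} shows $\normho^2 / | H |^2 \to 0$ uniformly, so $M_t$ becomes asymptotically totally umbilical.

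Finally, I would invoke the standard rescaling argument of Huisken and Andrews--Baker: rescale the flow near the blow-up time so that the rescaled mean curvature has magnitude one at the point where $| H |$ is maximal, then apply Hamilton's compactness theorem together with the uniform bounds on $| h |$ and its derivatives (the latter obtained via Bernstein-type estimates building on Theorem \ref{dH2}) to extract a smooth limit of rescaled submanifolds. The pinching $\normho^2 / | H |^2 \to 0$, the pinching $\min | H | / \max | H | \to 1$, and the diameter control force the limit to be totally umbilical with constant positive mean curvature, hence a round $n$-sphere; converting back to the original flow shows that $F_t$ converges to a round point. I expect the main obstacle to be the technical regularity step showing smooth subsequential convergence of the rescaled flow and ruling out non-spherical shrinking limits, but the three ingredients above provide exactly the control needed to push this classical scheme through in the $\mathbb{C}\mathbb{P}^m$ setting.
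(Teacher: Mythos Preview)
Your proposal is correct and follows essentially the same route as the paper: blow-up of $|h|^2$ forces $\overline{H}\to\infty$ via the preserved pinching, the gradient estimate (Theorem~\ref{dH2}) then gives $|\nabla H|<2\eta^2\overline{H}^2$ for $t$ near $T$, Lemma~\ref{ric} yields the diameter collapse and $\underline{H}/\overline{H}\to 1$, and a rescaling/compactness argument finishes the convergence to a round sphere. The paper's only cosmetic differences are that it cites the preserved pinching directly (rather than Theorem~\ref{sa0h2}) for $\overline{H}\to\infty$, and it defers the final rescaling step to the method of \cite{liu2012mean} instead of spelling out the Hamilton compactness/Bernstein-estimate framework you describe.
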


\begin{proof}
If $T$ is finite, we have $\max_{M_t} | h |^2 \rightarrow \infty$ as $t
\rightarrow T$. Let $\underline{H} = \min_{M_t} | H |$, $\overline{H} =
\max_{M_t} | H |$. From the preserved pinching condition, we get
$\overline{H} \rightarrow \infty$ as $t \rightarrow T$.

By Theorem \ref{dH2}, for any $\eta \in \left( 0, \frac{\varepsilon^{1 /
2}}{8 n \pi} \right)$, we have $| \nabla H | < (\eta | H |)^2 + \Psi
(\eta)$. Since $\overline{H} \rightarrow \infty$ as $t \rightarrow T$, there
exists a time $\tau$ depending on $\eta$, such that for $t > \tau$,
$\overline{H}^2 > \Psi (\eta) / \eta^2$. Then we have $| \nabla H | < 2
\eta^2 \overline{H}^2$. Using Lemma \ref{ric}, we obtain $\tmop{diam} M_t
\rightarrow 0$ and $\underline{H} / \overline{H} \rightarrow 1$ as $t
\rightarrow T$.

Now we dilate the metric of the ambient space such that the submanifold
maintains its volume along the flow. Using the same method as in
{\cite{liu2012mean}}, we can prove that the rescaled mean curvature flow
converges to a round sphere as the reparameterized time tends to infinity.
\end{proof}

\begin{theorem}
\label{Tinf}If $M_0$ satisfies $\normho^2 < W$ and $T = \infty$, then $F_t$
converges to a totally geodesic submanifold $\mathbb{C}\mathbb{P}^{n / 2}$
as $t \rightarrow \infty$.
\end{theorem}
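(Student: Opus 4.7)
The plan is to combine the exponential decay $\normho^2 \leqslant C_0 (| H |^2 + 1)^{1 - \sigma} \mathe^{- \varepsilon t / 2}$ from Theorem \ref{sa0h2}, the gradient bound of Theorem \ref{dH2}, and the volume monotonicity $\frac{\mathd}{\mathd t} \tmop{Vol} (M_t) = - \int_{M_t} | H |^2 \mathd \mu_t$ to force $M_t$ to flow to a totally geodesic limit. Since the finiteness lemma in Section~3 forces $T < \infty$ whenever $q \geqslant n - 4$, only the cases $q = 1$ and $2 \leqslant q < n - 4$ need treatment here.

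The first task is a uniform $C^0$ bound on $| h |^2$ on $[0, \infty)$. The pinching condition $| h |^2 < W$ reduces this to bounding $| H |^2$. The evolution equation for $| H |^2$ in Lemma \ref{evoinCP}(ii), combined with Theorem \ref{sa0h2}, yields a differential inequality for $\max_{M_t} | H |^2$ whose destabilising quartic terms carry the exponentially decaying prefactor $\mathe^{- \varepsilon t / 2}$, so $\max_{M_t} | H |^2$ stays bounded as $t \rightarrow \infty$. Once $| h |^2$ is bounded, standard Bernstein-type interpolation estimates for MCF yield uniform bounds on $| \nabla^k h |$ for every $k$.

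The second task is to show $\max_{M_t} | H | \rightarrow 0$. Integrating the volume formula and using $\tmop{Vol} (M_t) \geqslant 0$, we obtain $\int_0^\infty \int_{M_t} | H |^2 \mathd \mu_t \mathd t \leqslant \tmop{Vol} (M_0) < \infty$, so there is a sequence $t_i \rightarrow \infty$ with $\int_{M_{t_i}} | H |^2 \mathd \mu_{t_i} \rightarrow 0$. Combined with Theorem \ref{dH2} and the uniform volume of small geodesic balls (itself a consequence of the bounds in Step~1), integral decay is upgraded to pointwise decay $\max_{M_{t_i}} | H | \rightarrow 0$: at a point where $| H | \geqslant \eta$, the gradient bound forces $| H | \geqslant \eta / 2$ on a geodesic ball of definite radius, contradicting the integral decay once $\eta$ is fixed small enough.

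With uniform control on all $| \nabla^k h |$ and on the induced metric, Arzel\`a--Ascoli extracts a subsequence $F_{t_i}$ converging in $C^\infty$ to a smooth immersion $F_\infty : M^n \rightarrow \mathbb{C}\mathbb{P}^{(n + q) / 2}$ with $h_\infty \equiv 0$, i.e.\ totally geodesic. The classification of totally geodesic submanifolds of $\mathbb{C}\mathbb{P}^m$, together with the preserved open pinching and the dimensional constraints of the case at hand, identifies the limit as $\mathbb{C}\mathbb{P}^{n / 2}$. Full (not merely subsequential) $C^\infty$ convergence of $F_t$ then follows from the rigidity of the totally geodesic limit and the exponential decay estimates. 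I expect the main obstacle to be Step~2: bootstrapping the $L^1$-in-time control on $\int | H |^2 \mathd \mu_t$ to uniform pointwise decay of $| H |$, for which Theorem \ref{dH2} is the decisive tool. Identifying the limit specifically as $\mathbb{C}\mathbb{P}^{n / 2}$, rather than another totally geodesic model, is the other nontrivial point.
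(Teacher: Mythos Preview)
Your Step~1 contains a genuine error that undermines the rest of the argument. The evolution equation for $|H|^2$ reads
\[
\dt |H|^2 = \Delta |H|^2 - 2|\nabla H|^2 + 2n|H|^2 + 2R_2 + 6S_2,
\]
and since $R_2 = |H|^2(|h|^2 - \rho_2) = |H|^2\bigl(\normho^2 + \tfrac{1}{n}|H|^2 - \rho_2\bigr)$, the upper bound contains a term $\tfrac{2}{n}|H|^4$ that does \emph{not} carry the exponentially decaying prefactor from Theorem~\ref{sa0h2}; only the cross term $2|H|^2\normho^2$ does. So the differential inequality you obtain for $\max_{M_t}|H|^2$ is of Riccati type, $\frac{\mathd}{\mathd t}\overline{H}^2 \leqslant \tfrac{2}{n}\overline{H}^4 + (\text{lower order})$, which is perfectly consistent with finite-time blow-up and gives no uniform bound. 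Without Step~1 your higher-derivative bounds, your volume-of-balls lower bound in Step~2, and your compactness in Step~3 all collapse. Your Step~2 alone does not rescue this: the gradient estimate $|\nabla H| \lesssim \eta^2 \overline{H}^2$ means that near a maximum of $|H|$ one controls $|H|$ only on a ball of radius $\sim (\eta^2\overline{H})^{-1}$, so the contribution to $\int_{M_t}|H|^2\,\mathd\mu_t$ is of order $\overline{H}^{2-n}$, which for $n\geqslant 3$ tends to zero as $\overline{H}\to\infty$ and yields no contradiction with the integral decay.

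The paper turns this obstruction into the main tool. It exploits the \emph{lower} bound $R_2 + 3S_2 \geqslant \tfrac{1}{n}|H|^4$ to show that once $|H|^2$ is large somewhere, it must blow up: if $\overline{H}^2 \mathe^{\varepsilon t/8}$ were unbounded, pick $\tau$ with $\overline{H}^2(\tau)$ large; the gradient estimate together with Lemma~\ref{ric} (the Myers-type argument) forces $\underline{H}^2(\tau) > (1-\eta)\overline{H}^2(\tau)$, so $|H|^2$ is uniformly large on $M_\tau$. Feeding the gradient estimate into the evolution equation then gives $\dt|H|^2 > \Delta|H|^2 + \tfrac{1}{2n}|H|^4$ for $t\geqslant\tau$, and the maximum principle forces blow-up in finite time, contradicting $T=\infty$. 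Thus $|H|^2 < C\mathe^{-\varepsilon t/8}$, and combined with Theorem~\ref{sa0h2} this gives $|h|^2 \to 0$ directly, with no need for volume monotonicity or Arzel\`a--Ascoli.
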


\begin{proof}
	Firstly, we prove $| H |^2 < C \mathe^{- \varepsilon t / 8}$ by
	contradiction. Suppose that $\overline{H}^2 \cdot \mathe^{\varepsilon t /
		8}$ is unbounded. Then for any small positive number $\eta$, there exists a
	time $\tau$, such that $\overline{H}^2 (\tau) \cdot \mathe^{\varepsilon \tau
		/ 8} > \Psi (\eta) / \eta^2$. By Theorem \ref{dH2}, we have $| \nabla H | <
	2 \eta^2  \overline{H}^2$ on $M_{\tau}$. By Lemma \ref{ric}, we have
	$\underline{H}^2 (\tau) > (1 - \eta) \overline{H}^2 (\tau) > \frac{1 -
		\eta}{\eta^2} \Psi (\eta) \mathe^{- \varepsilon \tau / 8}$. This together
	with Theorem \ref{dH2} yields $| \nabla H |^2 < (\eta | H |)^4 +
	\frac{\eta^4}{(1 - \eta)^2} \underline{H}^4 (\tau) \cdot \mathe^{\varepsilon
		(\tau - t) / 4}$. From the evolution equation of $| H |^2$, we have
	\begin{equation}
	\dt | H |^2 > \Delta | H |^2 + \frac{1}{n} | H |^4 - \frac{1}{2 n}
	\underline{H}^4 (\tau) \cdot \mathe^{\varepsilon (\tau - t) / 4} .
	\label{dtH2gr}
	\end{equation}
	Using the maximum principle, we get $| H |^2 \geqslant \underline{H}^2
	(\tau)$ if $t \geqslant \tau$. Then (\ref{dtH2gr}) yields $\dt | H |^2 >
	\Delta | H |^2 + \frac{1}{2 n} | H |^4$ for $t \geqslant \tau$. Hence, $| H
	|^2$ blows up in finite time. This contradicts the infinity of $T$.
	Therefore, we obtain $| H |^2 < C \mathe^{- \varepsilon t / 8}$.
	
	From Theorem \ref{sa0h2}, we have $| h |^2 = \normho^2 + \frac{1}{n} | H |^2
	\leqslant C \mathe^{- \varepsilon t / 8}$. Since $| h | \rightarrow 0$ as $t
	\rightarrow \infty$, $M_t$ converges to a totally geodesic submanifold
	$M_{\infty}$ as $t \rightarrow \infty$.

In the case of $q \geqslant n - 4$, since $| h |^2 < \psi(|H|^2)$ is preserved,
thus the flow can't converge to a totally geodesic submanifold. So the
dimension and codimension of $M_{\infty}$ satisfies $q < n - 4$. From the
fact that the totally geodesic submanifolds of $\mathbb{C}\mathbb{P}^m$
are totally real submanifolds $\mathbb{R}\mathbb{P}^n$ and K{\"a}hler
submanifolds $\mathbb{C}\mathbb{P}^{n / 2}$ {\cite{chenby}}, we see that
$M_{\infty}$ must be $\mathbb{C}\mathbb{P}^{n / 2}$.
\end{proof}

Combining the results of Theorems \ref{Tfin} and \ref{Tinf}, we complete the
proof of Theorem \ref{theoh}.

At last, we prove the convergence result under the weakly pinching condition
for the mean curvature flow with $q \geqslant n - 4 \geqslant 2$.

\noindent\textit{Proof of Theorem \ref{theo3}.}
From (\ref{paraUhico}), we have
\begin{eqnarray*}
&  & \frac{1}{2} \left( \dt - \Delta \right) \left( \normho^2 -
\mathring{\psi} \right)\\
& \leqslant & \left( \normho^2 - \mathring{\psi} \right) \left( \normho +
\mathring{\psi} + n + 3 - 2 \varepsilon + \left( \tfrac{1}{n} -
\mathring{\psi}' - \varepsilon \right) | H |^2 + 2 \rho_2 \right)\\
&  & + \mathring{\psi} \left( \mathring{\psi} + \tfrac{1}{n} | H |^2 + n
+ 3 \right) - \mathring{\psi}' \cdot | H |^2 \left( \mathring{\psi} +
\tfrac{1}{n} | H |^2 + n \right) .
\end{eqnarray*}
Using the strong maximum principle, we obtain either $\normho^2 <
\mathring{\psi}$ for some $t > 0$, or $\normho^2  = \mathring{\psi}$ holds for all $t \in
[0, t_0)$.

If $\normho^2 <
\mathring{\psi}$ for some $t > 0$, then it follows from Theorem
\ref{theoh} that $F_t$ converges to a round point.

If $\normho^2  = \mathring{\psi}$ holds for all $t \in [0, t_0)$, we have
\[ \mathring{\psi} \left( \mathring{\psi} + \tfrac{1}{n} | H |^2 + n + 3
\right) - \mathring{\psi}' \cdot | H |^2 \left( \mathring{\psi} +
\tfrac{1}{n} | H |^2 + n \right) = 0. \]
From Lemma \ref{psio} (iii), we get $| H | = 0$. Thus $| h |^2 = \psi (0) =
0$. Therefore, $F_t (M)$ is a totally geodesic submanifold for each $t \in
[0, + \infty)$.
\hfill\qedsymbol\\

\section{Appendix}

For an odd integer $n \geqslant 3$, and a real number $\varepsilon \in [0, 1]$,
we define a function $\varphi_{\varepsilon} : [0, + \infty) \rightarrow
\mathbb{R}$ by
\[ \varphi_{\varepsilon} (x):= d_{\varepsilon} + c_{\varepsilon} x -
\sqrt{b^2 x^2 + 2 a b x + e}, \]
where $a = 2 \sqrt{(n^2 - 4 n + 3) b}, b = \min \left\{ \frac{n - 3}{4 n - 4},
\frac{2 n - 5}{n^2 + n - 2} \right\}, c_{\varepsilon} = b + \tfrac{1}{n - 1 +
	\varepsilon}, d_{\varepsilon} = 2 - 2 \varepsilon + a, e =
\sqrt{\varepsilon}$. We set $\varphi = \varphi_0$.

\begin{lemma}
	The function $\varphi$ satisfies
	\begin{enumerateroman}
		\item $\frac{x}{n - 1} + 2 < \varphi (x) < \frac{x}{n - 1} + n$,
		
		\item $\varphi (x) > \sqrt{2 (n - 3)}$.
	\end{enumerateroman}
\end{lemma}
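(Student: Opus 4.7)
The plan is to work throughout with $g(x) := \varphi(x) - x/(n-1) = 2 + a + bx - \sqrt{b^2 x^2 + 2abx}$; both inequalities in part (i) then reduce to statements about $g$. The lower bound $g(x) > 2$ becomes, after isolating the radical and squaring, simply $a^2 > 0$ (which collapses to equality only in the degenerate case $n = 3$, where $a = b = 0$). For the upper bound I would differentiate and apply the same squaring step to $g'(x) = b\bigl(1 - (bx+a)/\sqrt{b^2 x^2 + 2abx}\bigr)$ to conclude $g'(x) < 0$, so that $g(x) \leq g(0) = 2 + a$. The claim $\varphi(x) < x/(n-1) + n$ then follows once I know $a \leq n - 3$, which is immediate from $a^2 = 4(n-1)(n-3) b$ combined with $b \leq (n-3)/(4(n-1))$.

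For part (ii) my strategy is to produce an $x$-independent lower bound for $\varphi$ and verify it exceeds $\sqrt{2(n-3)}$. Writing $b^2 x^2 + 2abx = (bx+a)^2 - a^2$ and using the elementary inequality $\sqrt{A^2 - B^2} \leq A - B^2/(2A)$ (valid for $A \geq |B|$, since the square of the right side is $A^2 - B^2 + B^4/(4A^2)$) gives
\[
\varphi(x) \geq 2 + \frac{x}{n-1} + \frac{a^2}{2(bx+a)}.
\]
A short derivative calculation shows this lower bound is non-decreasing in $x$ whenever $b(n-1) \leq 2$. Both expressions in the definition of $b$ satisfy this, since $(n-3)/4 \leq 1$ on the first branch (which is active only for $3 \leq n \leq 7$) and $(2n-5)/(n+2) = 2 - 9/(n+2) < 2$ on the second. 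The infimum of the lower bound is therefore $2 + a/2$, attained at $x = 0$, and it suffices to show $2 + a/2 > \sqrt{2(n-3)}$.

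This last numerical inequality is the main obstacle. The case $n = 3$ is trivial, and for $n \geq 5$ squaring reduces it to $(a+4)^2 > 8(n-3)$. On the first branch ($a = n - 3$) the excess is $(n-3)^2 + 16 > 0$. On the second branch ($n \geq 7$, $a^2 = 4(n-3)(2n-5)/(n+2)$), substitution and clearing denominators transform the inequality into $a > 5(n-7)/(2(n+2))$; squaring once more yields the cubic inequality $32 n^3 - 137 n^2 + 238 n - 745 > 0$. At $n = 7$ this polynomial equals $5184$, and its derivative $96 n^2 - 274 n + 238$ has discriminant $274^2 - 4 \cdot 96 \cdot 238 = -16316 < 0$, so the cubic is strictly increasing on $[7, \infty)$ and therefore remains positive. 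Apart from this cubic check, everything is a sequence of elementary algebraic manipulations.
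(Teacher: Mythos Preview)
Your argument is correct. For part (i) you essentially do what the paper does---show that $g(x)=\varphi(x)-x/(n-1)$ is decreasing (the paper phrases this as $\varphi''>0$ together with $\lim_{x\to\infty}\varphi'(x)=1/(n-1)$, you square directly) and then read off the bounds from $g(0)=2+a$ and $\lim_{x\to\infty}g(x)=2$; your observation that $a\le n-3$ is the same upper-bound ingredient.

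Part (ii) is where you diverge. The paper solves $\varphi'(x)=0$ explicitly, obtaining
\[
\min_{x\ge 0}\varphi(x)=d_0-\frac{ac_0}{b}+\frac{a}{b}\sqrt{c_0^2-b^2},
\]
and then checks this value against $\sqrt{2(n-3)}$ separately for $n=5$ (getting $4\sqrt{2}-2$) and for $n\ge 7$ (getting $2+2\sqrt{\tfrac{n-3}{2n-5}}(\sqrt{5n-8}-\sqrt{n+2})$). You instead bound $\varphi$ below by the explicit rational function $L(x)=2+\tfrac{x}{n-1}+\tfrac{a^2}{2(bx+a)}$ via $\sqrt{A^2-B^2}\le A-B^2/(2A)$, verify $L$ is monotone because $b(n-1)\le 2$, and reduce to the single inequality $2+a/2>\sqrt{2(n-3)}$. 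Your route avoids locating the critical point and replaces the paper's somewhat opaque radical expressions by a transparent chain of squarings; the cost is the cubic positivity check $32n^3-137n^2+238n-745>0$ on the second branch, which is more bookkeeping than the paper's direct evaluation but entirely elementary. Either approach works; yours trades an exact minimization for a slightly weaker but easier-to-analyze lower bound.
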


\begin{proof}
	If $n = 3$, then $a = b = 0$. We have $\varphi (x) = 2 + \tfrac{x}{n - 1}$.
	It's easy to verify the inequalities above.
	
	If $n \geqslant 5$, by direct computations, we get
	\[ \varphi' (x) = c_0 - \frac{b x + a}{\sqrt{x^2 + 2 a x / b}}, \]
	\[ \varphi'' (x) = \frac{a^2}{b (x^2 + 2 a x / b)^{3 / 2}} . \]
	Since $\left( \varphi (x) - \frac{x}{n - 1} \right)'' = \varphi'' (x) > 0$
	and $\lim_{x \rightarrow \infty} \varphi' (x) = \frac{1}{n - 1}$, we have
	$\varphi' (x) < \frac{1}{n - 1}$. Hence we get
	\[ 2 = \lim_{x \rightarrow \infty} \left( \varphi (x) - \frac{x}{n - 1}
	\right) < \varphi (x) - \frac{x}{n - 1} \leqslant \varphi (0) = 2 + a <
	n. \]
	If $n \geqslant 5$, we figure out that
	\[ \min_{x \geqslant 0} \varphi (x) = \varphi \left( \frac{a c_0}{b
		\sqrt{c^2_0 - b^2}} - \frac{a}{b} \right) = d_0 - \frac{a c_0}{b} +
	\frac{a}{b} \sqrt{c_0^2 - b^2} . \]
	If $n = 5$, we have $\min_{x \geqslant 0} \varphi (x) = 4 \sqrt{2} - 2$. If
	$n \geqslant 7$, we have $\min_{x \geqslant 0} \varphi (x) = 2 + 2
	\sqrt{\frac{n - 3}{2 n - 5}} \left( \sqrt{5 n - 8} - \sqrt{n + 2} \right) >
	\sqrt{2 (n - 3)}$.
\end{proof}

Letting $\mathring{\varphi}_{\varepsilon} (x) = \varphi_{\varepsilon} (x) -
\frac{x}{n}$, we have

\begin{lemma}
	There exists a positive constant $\varepsilon_1$ depending on $n$, such that
	for all $\varepsilon \in (0, \varepsilon_1)$, the function
	$\varphi_{\varepsilon}$ satisfies the following inequalities
	\begin{enumerateroman}
		\item $2 x \mathring{\varphi}_{\varepsilon}'' (x) +
		\mathring{\varphi}_{\varepsilon}' (x) < \frac{2 (n - 1)}{n (n + 2)} $,
		
		\item $\mathring{\varphi}_{\varepsilon} (x) (\varphi_{\varepsilon} (x) - n
		+ 3) - x \mathring{\varphi}_{\varepsilon}' (x) (\varphi_{\varepsilon} (x)
		+ n + 3) < 2 (n - 1)$,
		
		\item $\mathring{\varphi}_{\varepsilon} (x) - x
		\mathring{\varphi}_{\varepsilon}' (x)>1$.
	\end{enumerateroman}
\end{lemma}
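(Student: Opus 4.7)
The plan is to reduce each of the three inequalities to a concrete algebraic form in $x$, verify the $\varepsilon=0$ case (where the inequalities are at worst marginal), and then exploit the perturbation parameters $e=\sqrt{\varepsilon}$, $c_\varepsilon$, $d_\varepsilon$ to promote them to strict inequalities for small $\varepsilon>0$. Setting $g_\varepsilon(x)=\sqrt{b^2x^2+2abx+e}$, direct differentiation of $\varphi_\varepsilon=d_\varepsilon+c_\varepsilon x-g_\varepsilon$ produces the three identities I will rely on:
\[
\varphi_\varepsilon'=c_\varepsilon-\tfrac{b^2x+ab}{g_\varepsilon},\qquad \varphi_\varepsilon''=\tfrac{b^2(a^2-e)}{g_\varepsilon^{3}},\qquad \varphi_\varepsilon-x\varphi_\varepsilon'=d_\varepsilon-\tfrac{abx+e}{g_\varepsilon}.
\]
Since $\mathring\varphi_\varepsilon=\varphi_\varepsilon-x/n$, we have $\mathring\varphi_\varepsilon-x\mathring\varphi_\varepsilon'=\varphi_\varepsilon-x\varphi_\varepsilon'$, $\mathring\varphi_\varepsilon''=\varphi_\varepsilon''$, and $2x\mathring\varphi_\varepsilon''+\mathring\varphi_\varepsilon'=2x\varphi_\varepsilon''+\varphi_\varepsilon'-1/n$.

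For (iii), the plan is quickest: $a^2-e>0$ for small $\varepsilon$ gives $\varphi_\varepsilon''>0$, so $\tfrac{d}{dx}(\varphi_\varepsilon-x\varphi_\varepsilon')=-x\varphi_\varepsilon''\le 0$, hence $\mathring\varphi_\varepsilon-x\mathring\varphi_\varepsilon'$ is non-increasing and its infimum as $x\to\infty$ equals $d_\varepsilon-a=2-2\varepsilon>1$. For (i), the explicit formula gives
\[
2x\varphi_\varepsilon''+\varphi_\varepsilon'-\tfrac{1}{n}=\bigl(c_\varepsilon-\tfrac{1}{n}\bigr)-\tfrac{b^4x^3+3ab^3x^2+3b^2ex+abe}{g_\varepsilon^{3}},
\]
and at $\varepsilon=0$ this simplifies to $b+\tfrac{1}{n(n-1)}-\tfrac{b^3x^2(bx+3a)}{g_0^{3}}$. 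The required inequality then reduces to $b-\tfrac{2n-5}{(n-1)(n+2)}\le\tfrac{b^3x^2(bx+3a)}{g_0^{3}}$, which is immediate because the left-hand side is $\le 0$ by the choice $b\le\tfrac{2n-5}{(n-1)(n+2)}$ and the right-hand side is $\ge 0$. For $\varepsilon>0$ the extra positive term $abe/g_\varepsilon^{3}$ dominates the small increase in $c_\varepsilon$ near $x=0$, and at large $x$ one checks that $c_\varepsilon-1/n\to (1-\varepsilon)/(n(n-1+\varepsilon))<2(n-1)/(n(n+2))$.

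The main obstacle is (ii). Using $x\varphi_\varepsilon'=\varphi_\varepsilon-d_\varepsilon+(abx+e)/g_\varepsilon$, I would first rewrite its left-hand side as
\[
(d_\varepsilon-2n)\varphi_\varepsilon+(n+3)d_\varepsilon+2x-\tfrac{(abx+e)(\varphi_\varepsilon+n+3)}{g_\varepsilon}.
\]
At the endpoints: at $x=0$ it equals $(d_\varepsilon-\sqrt{e})(d_\varepsilon-\sqrt{e}-n+3)$, which for $\varepsilon=0$ is $(2+a)(5+a-n)$; using $a^2=4(n^2-4n+3)b$ with $b\le\tfrac{2n-5}{(n-1)(n+2)}$, an elementary check shows $(2+a)(5+a-n)\le 2(n-1)$, with equality possible for some admissible $n$. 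As $x\to\infty$, a direct asymptotic expansion yields the limit $10-2n+O(\varepsilon)<2(n-1)$. The crux is that equality at $x=0$, $\varepsilon=0$ is broken by the $-\sqrt{e}$ correction, which introduces a strict deficit of order $\varepsilon^{1/4}$ in the $x=0$ value; this must be shown to survive uniformly in $x$ and to outweigh any first-order gain elsewhere. For the interior, I would differentiate the right-hand expression in $x$ and compare against the endpoint values, or alternatively bound the sole $x$-dependent term $(abx+e)(\varphi_\varepsilon+n+3)/g_\varepsilon$ directly, showing it remains at least $2x+(d_\varepsilon-2n)\varphi_\varepsilon+(n+3)d_\varepsilon-2(n-1)+\eta$ for some positive $\eta$ depending on $\varepsilon$.

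Having the three analyses in hand, I would then choose a single $\varepsilon_1>0$ small enough that all three strict inequalities hold for every $x\ge 0$ and $\varepsilon\in(0,\varepsilon_1)$. The essential technical difficulty, concentrated in (ii), is that $\varphi_\varepsilon'(0)\to-\infty$ as $\varepsilon\to 0$, so one cannot simply quote continuous dependence on $\varepsilon$; one must quantify the $\varepsilon^{1/4}$-gap coming from $\sqrt{e}$ and propagate it to a uniform inequality over the full half-line $[0,\infty)$.
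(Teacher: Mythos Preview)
Your arguments for (i) and (iii) are essentially sound. For (iii) the monotonicity argument via $\varphi_\varepsilon''>0$ is actually cleaner than the paper's cruder bound and yields the sharper estimate $\mathring\varphi_\varepsilon-x\mathring\varphi_\varepsilon'> d_\varepsilon-a=2-2\varepsilon$. For (i) you have a harmless sign slip ($c_\varepsilon$ \emph{decreases} with $\varepsilon$, so there is no ``increase'' to dominate); the paper simply drops the manifestly positive subtracted fraction to get $2x\mathring\varphi_\varepsilon''+\mathring\varphi_\varepsilon'<c_\varepsilon-\tfrac{1}{n}<c_0-\tfrac{1}{n}\le\tfrac{2(n-1)}{n(n+2)}$, strict because $\varepsilon>0$.

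The genuine gap is in (ii). First, your asymptotic limit is wrong: it is not $10-2n$ but exactly $2(n-1)$ at $\varepsilon=0$. You dropped the constant contribution coming from the $O(1/x)$ correction $\tfrac{abx+e}{g_\varepsilon}=a-\tfrac{a^2-e}{bx}+O(x^{-2})$ multiplied against the linear growth of $\varphi_\varepsilon+n+3$; this produces an extra $\tfrac{a^2}{b(n-1)}=4(n-3)$, and $10-2n+4(n-3)=2(n-1)$. Worse, for $n=5,7$ one also has $f(0)=(2+a)(5+a-n)=2(n-1)$, and in fact $f\equiv 2(n-1)$ identically when $\varepsilon=0$ for these $n$. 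So your endpoint-plus-perturbation strategy collapses: both endpoints sit on the threshold and there is no interior slack to exploit.

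The paper's route is a second-derivative argument you do not have. Writing $f(x)$ for the left side of (ii), one computes
\[
f''(x)=\bigl[b(2b+bd_\varepsilon-ac_\varepsilon)x^2+(ab(1-n+d_\varepsilon)-c_\varepsilon e)x-(n+1)e\bigr]\cdot\frac{3b^2(a^2-e)}{(b^2x^2+2abx+e)^{5/2}},
\]
and checks that for $\varepsilon>0$ all three coefficients in the bracket are strictly negative (this is where both constraints $b\le\tfrac{n-3}{4(n-1)}$ and $b\le\tfrac{2n-5}{(n+2)(n-1)}$ are used). Hence $f''<0$, so $f'$ is decreasing; since a direct calculation gives $\lim_{x\to\infty}f'(x)=0$, this forces $f'>0$ everywhere. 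Thus $f$ is increasing and bounded above by $\lim_{x\to\infty}f(x)$, which a careful expansion shows equals $2(n-1)-\tfrac{\sqrt{\varepsilon}}{b(n-1)}+O(\varepsilon)<2(n-1)$. The strictness therefore comes from the limit at infinity, not from $x=0$, and the interior is controlled entirely by the monotonicity forced by the sign of $f''$.
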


\begin{proof}
	If $n = 3$, then $a = b = 0$. We have $\varphi_{\varepsilon} (x) = 2 - 2
	\varepsilon + \tfrac{x}{2 + \varepsilon} - \sqrt[4]{\varepsilon}$. It's easy
	to verify these inequalities.
	
	If $n \geqslant 5$, by direct computations, we get
	\[ \mathring{\varphi}_{\varepsilon}' (x) = c_{\varepsilon} - \frac{1}{n} -
	\frac{b^2 x + a b}{\sqrt{b^2 x^2 + 2 a b x + e}}, \]
	\[ \mathring{\varphi}_{\varepsilon}'' (x) = \frac{b^2 (a^2 - e)}{(b^2 x^2 +
		2 a b x + e)^{3 / 2}}, \]
	\[ \mathring{\varphi}_{\varepsilon}''' (x) = - \frac{3 b^3 (a^2 - e) (b x +
		a)}{(b^2 x^2 + 2 a b x + e)^{5 / 2}} . \]
	Then we have
	\begin{eqnarray*}
		2 x \mathring{\varphi}_{\varepsilon}'' (x) +
		\mathring{\varphi}_{\varepsilon}' (x) & = & c_{\varepsilon} - \frac{1}{n}
		- \frac{b^3 x^2 (b x + 3 a) + e b (3 b x + a)}{(b^2 x^2 + 2 a b x + e)^{3
		/ 2}}\\
		& < & c_{\varepsilon} - \frac{1}{n}\\
		& < & \frac{2 (n - 1)}{n (n + 2)} .
	\end{eqnarray*}
	This proves inequality (i).
	
	Setting $f (x) = \mathring{\varphi}_{\varepsilon} (x) (\varphi_{\varepsilon}
	(x) - n + 3) - x \mathring{\varphi}_{\varepsilon}' (x)
	(\varphi_{\varepsilon} (x) + n + 3)$, we get
\begin{eqnarray*}
	f (x) & = & e + d_{\varepsilon} (d_{\varepsilon} + 3 - n) + (2 + a b +
	c_{\varepsilon}  (d_{\varepsilon} - 2 n)) x\\
	&  & - (b^2 x^2 + 2 a b x + e)^{- \frac{1}{2}} \times\\
	&  & [(3 a b (d_{\varepsilon} + 1 - n) + c_{\varepsilon} e) x + b (a
	c_{\varepsilon} + b (d_{\varepsilon} - 2 n)) x^2 + e (2 d_{\varepsilon} + 3
	- n)] .
\end{eqnarray*}
	Then we figure out
	\begin{eqnarray*}
		\lim_{x \rightarrow + \infty} f (x) & = & \frac{a^2 c_{\varepsilon}}{b} +
		d_{\varepsilon} (d_{\varepsilon} + 3 - n) + a (n - 3 - 2 d_{\varepsilon}) +
		e \left( 1 - \frac{c_{\varepsilon}}{b} \right)\\
		& = & 2 (n - 1) + \frac{2 \varepsilon (n^2 - 10 n + 13 + 3 \varepsilon (n -
			3) + 2 \varepsilon^2)}{n - 1 + \varepsilon} + e \left( 1 -
		\frac{c_{\varepsilon}}{b} \right)\\
		& < & 2 (n - 1),
	\end{eqnarray*}
	and
	\begin{eqnarray*}
		f' (x) & = & 2 + a b + c_{\varepsilon} d_{\varepsilon} - 2 c_{\varepsilon} n
		- (b^2 x^2 + 2 a b x + e)^{- 3 / 2} \times\\
		&  & [(3 a^2 b^2 (d_{\varepsilon} + 1 - n) - 3 b e (b - a c_{\varepsilon} +
		b n)) x \\
		&  & + b^2 (a c_{\varepsilon} + b d_{\varepsilon} - 2 n b) x^2 (b x + 3
		a)\\
		&  & + c_{\varepsilon} e^2 + a b e (d_{\varepsilon} - 2 n) ] .
	\end{eqnarray*}
	Then we have
	\[ \lim_{x \rightarrow + \infty} f' (x) = 2 + a b + c_{\varepsilon}
	d_{\varepsilon} - 2 c_{\varepsilon} n - a c_{\varepsilon} - b
	d_{\varepsilon} + 2 b n = 0. \]
	Furthermore, we get
	\[ f'' (x) = [b (2 b + b d_{\varepsilon} - a c_{\varepsilon}) x^2 + (a b (1 -
	n + d_{\varepsilon}) - c_{\varepsilon} e) x - (n + 1) e] \frac{3 b^2 (a^2 -
		e)}{(b^2 x^2 + 2 a b x + e)^{5 / 2}} . \]
	From $b \leqslant \frac{n - 3}{4 n - 4}$, we obtain
	\[ 2 b + b d_{\varepsilon} - a c_{\varepsilon} = (4 - 2 \varepsilon) b -
	\frac{2 \sqrt{(n^2 - 4 n + 3) b}}{n - 1 + \varepsilon} < 0, \]
	and
	\[ 1 - n + d_{\varepsilon} = 3 - n + 2 \sqrt{(n^2 - 4 n + 3) b} - 2
	\varepsilon < 0. \]
	So, we get $f'' (x) < 0$. Then we have $f' (x) > 0$. From this we deduce that
	\[ f (x) < \lim_{x \rightarrow + \infty} f (x) < 2 (n - 1) . \]
	Thus, inequality (ii) is proved.
	
	We have
	\begin{eqnarray*}
		\mathring{\varphi}_{\varepsilon} (x) - x \mathring{\varphi}_{\varepsilon}'
		(x) & = & d_{\varepsilon} - \frac{a b x + e}{\sqrt{b^2 x^2 + 2 a b x +
		e}}\\
		& > & d_{\varepsilon} - \frac{a b x}{\sqrt{b^2 x^2}} -
		\frac{e}{\sqrt{e}}\\
		& = & 2 - 2 \varepsilon - \sqrt[4]{\varepsilon} .
	\end{eqnarray*}
	This implies inequality (iii).
\end{proof}

For each integer $n \geqslant 6$, we define the function $\psi : [0, + \infty)
\rightarrow \mathbb{R}$ by
\[ \psi (x) := \nu + \kappa x - \sqrt{\lambda^2 x^2 + 2 \lambda \mu x +
	\nu^2}, \]
where $\kappa = \lambda + \frac{1}{n - 1}, \lambda = \tfrac{3}{n^3 - 4 n^2 +
	3}, \mu = \nu + \frac{3}{n}, \nu = \frac{9}{n^2 - 3 n - 3}$.

\begin{lemma}
	\label{psi}The function $\psi$ satisfies the following inequalities.
	\begin{enumerateroman}
		\item $\frac{1}{n} \leqslant \psi' (x) < \frac{1}{n - 1}$, $\frac{x}{n}
		\leqslant \psi (x) \leqslant \frac{x}{n - 1}$, and the equalities hold if
		and only $x = 0$,
		
		\item $\max_{x \geqslant 0} (2 x \psi'' (x) + \psi' (x)) < \frac{3}{n +
	8}$,
		
		\item $3 \psi (x) - \frac{3 x}{n} + (\psi (x) - x \psi' (x)) (\psi (x) +
		n) \leqslant 0$, and the equalities hold if
		and only $x = 0$,
		
		\item $0 \leqslant x \psi' (x) - \psi (x) < 2$.
	\end{enumerateroman}
\end{lemma}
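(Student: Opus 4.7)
The plan is to compute $\psi'$ and $\psi''$ directly and then handle the four statements, with (i), (ii), (iv) following by standard calculus and (iii) being the main obstacle, to be handled by rationalization. Setting $D(x) = \lambda^2 x^2 + 2\lambda\mu x + \nu^2$, one finds $\psi'(x) = \kappa - (\lambda^2 x + \lambda\mu)/\sqrt{D(x)}$ and $\psi''(x) = \lambda^2(\mu^2-\nu^2)/D(x)^{3/2}$. Since $\mu - \nu = 3/n > 0$, we have $\psi''(x) > 0$ on $[0,\infty)$, so $\psi'$ is strictly increasing.

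For parts (i) and (iv): Evaluating at $x=0$ gives $\psi(0) = 0$ and $\psi'(0) = \kappa - \lambda\mu/\nu$. The arithmetic identity $(n-1)(n^2 - 3n - 3) = n^3 - 4n^2 + 3$, equivalently $3\lambda(n-1) = \nu$, makes $\psi'(0) = 1/n$ exactly. Moreover $\psi'(x) \to \kappa - \lambda = 1/(n-1)$ as $x \to \infty$. Monotonicity of $\psi'$ then yields $1/n \le \psi'(x) < 1/(n-1)$, and integrating from $0$ (using $\psi(0) = 0$) gives the bounds in (i). For (iv), observe that $(x\psi' - \psi)' = x\psi'' \ge 0$, so $x\psi' - \psi$ is non-decreasing from $0$ at $x=0$ to its limit $\mu - \nu = 3/n \le 1/2 < 2$ at infinity.

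For part (ii), let $\phi(x) = 2x\psi''(x) + \psi'(x)$. A short calculation gives
\[
\phi'(x) = 3\psi''(x) + 2x\psi'''(x) = \frac{3\lambda^2 (\mu^2 - \nu^2)(\nu^2 - \lambda^2 x^2)}{D(x)^{5/2}},
\]
whose unique zero in $(0,\infty)$ is the maximizer $x_\ast = \nu/\lambda$. At $x_\ast$, $D(x_\ast) = 2\nu(\mu+\nu)$, so the maximum value equals $\kappa - \lambda\sqrt{2\nu/(\mu+\nu)}$. Substituting $2\nu/(\mu+\nu) = 6n/(n^2 + 3n - 3)$ together with the definitions of $\kappa$ and $\lambda$ reduces the inequality $\phi(x_\ast) < 3/(n+8)$ to a straightforward algebraic check valid for $n \ge 6$.

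Part (iii) is the main obstacle. Writing $s = \sqrt{D(x)}$ and using the identity $\psi - x\psi' = (\nu s - \lambda\mu x - \nu^2)/s$, a direct expansion of $g(x) := 3\psi - 3x/n + (\psi - x\psi')(\psi + n)$ yields
\[
g(x)\, s(x) = P(x)\, s(x) + Q(x),
\]
for polynomials $P$ (linear) and $Q$ (quadratic) in $x$; a quick check shows $P > 0$ and $Q < 0$ on $[0,\infty)$. Hence $g \le 0$ is equivalent to $P s \le -Q$, and squaring both non-negative sides converts the claim to the polynomial inequality
\[
Q(x)^2 - P(x)^2 D(x) \ge 0 \quad \text{for all } x \ge 0,
\]
with equality iff $x = 0$. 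Two structural simplifications cut the degree: first, the $x^4$-coefficients of $Q^2$ and $P^2 D$ coincide (a consequence of $\kappa - \lambda = 1/(n-1)$ and $\mu - \nu = 3/n$), so the left-hand side has degree at most three; second, $x = 0$ is at least a double root, thanks to the auxiliary identity $\lambda\mu - \nu\kappa = -\nu/n$ (itself equivalent to $3\lambda(n-1) = \nu$). Thus $Q^2 - P^2 D = x^2(a_0 + a_1 x)$ for explicit constants $a_0, a_1$, and the proof reduces to verifying $a_0 \ge 0$ and $a_1 \ge 0$ by substituting the numerical values of $\nu, \mu, \lambda, \kappa$. The real challenge lies in organizing this polynomial algebra concisely, since a naive expansion produces long expressions, but the two structural observations above make only the $x^2$ and $x^3$ coefficients relevant.
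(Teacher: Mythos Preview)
Your proposal is correct and follows essentially the same route as the paper. For (i), (ii), and (iv) the arguments are practically identical (your derivation of (iv) via $(x\psi'-\psi)'=x\psi''\ge 0$ is in fact slightly cleaner than the paper's, which deduces the lower bound from (iii)).

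For (iii) the rationalization strategy is the same, but the paper pushes one step further than you do. After writing $h(x)=P(x)^2 D(x)-Q(x)^2$ in your notation, the paper introduces $A=\kappa\mu+3\lambda+\lambda\nu$, $B=n+3+2\nu$, $C=\kappa\nu^2+\lambda\mu(n+6+3\nu)$ and verifies the pair of identities
\[
\mu A+\lambda\nu B=C,\qquad \nu A+\lambda\mu B=C.
\]
The second kills the $x^1$ coefficient (your double root at $0$), but the first kills the $x^3$ coefficient as well, so in fact $a_1=0$ and $Q^2-P^2D$ is a pure multiple of $x^2$. Moreover, the paper computes that surviving coefficient explicitly as
\[
\frac{81\,(n^3-12n+9)^2}{n^2(n-1)^2(n^2-3n-3)^4},
\]
a manifest perfect square, so no residual numerical check is needed. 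Your reduction to ``verify $a_0\ge 0$ and $a_1\ge 0$'' is valid, but recognizing the extra identity makes the endgame cleaner.
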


\begin{proof}
	Taking derivatives, we get
	\[ \psi' (x) = \kappa - \frac{\lambda^2 x + \lambda \mu}{\sqrt{\lambda^2 x^2
	+ 2 \lambda \mu x + \nu^2}}, \]
	\[ \psi'' (x) = \frac{\lambda^2 (\mu^2 - \nu^2)}{(\lambda^2 x^2 + 2 \lambda
		\mu x + \nu^2)^{3 / 2}}, \]
	\[ \psi''' (x) = - \frac{3 \lambda^3 (\mu^2 - \nu^2) (\lambda x +
		\mu)}{(\lambda^2 x^2 + 2 \lambda \mu x + \nu^2)^{5 / 2}} . \]

	(i) We have $\psi' (0) = \kappa - \frac{\lambda \mu}{\nu} = \frac{1}{n}$ and
	$\lim_{x \rightarrow + \infty} \psi' (x) = \kappa - \lambda = \frac{1}{n -
		1}$. Since $\psi'' (x) > 0$, we get $\frac{1}{n} < \psi' (x) < \frac{1}{n -
		1}$ for $x > 0$. Thus we obtain $\frac{x}{n} < \psi (x) < \frac{x}{n - 1}$
	for $x > 0$.
	
	(ii) Letting $g (x) = 2 x \psi'' (x) + \psi' (x)$, we have
	\[ g (x) = \kappa - \frac{\lambda^4 x^3 + 3 \lambda^3 \mu x^2 + 3 \lambda^2
		\nu^2 x + \lambda \mu \nu^2}{(\lambda^2 x^2 + 2 \lambda \mu x + \nu^2)^{3
	/ 2}}, \]
	and
	\[ g' (x) = 2 x \psi''' (x) + 3 \psi'' (x) = \frac{3 \lambda^2 (\mu^2 -
		\nu^2) (\nu^2 - \lambda^2 x^2)}{(\lambda^2 x^2 + 2 \lambda \mu x +
		\nu^2)^{5 / 2}} . \]
	Thus we get
	\begin{eqnarray*}
		\max_{x \geqslant 0} g (x) & = & g \left( \tfrac{\nu}{\lambda} \right) =
		\kappa - \lambda \sqrt{\frac{2 \nu}{\mu + \nu}}\\
		& = & \frac{n (n - 3) - 3 \sqrt{\frac{6 n}{n^2 + 3 n - 3}}}{n^3 - 4 n^2 +
	3}\\
		& < & \frac{3}{n + 8} .
	\end{eqnarray*}

	(iii) We have
	\begin{eqnarray*}
		&  & 3 \psi (x) - \frac{3 x}{n} + (\psi (x) - x \psi' (x)) (\psi (x) +
		n)\\
		& = & \left[ \nu (n + 3 + 2 \nu) + \left( \lambda \mu + 3 \kappa + \kappa
		\nu - \tfrac{3}{n} \right) x \right]\\
		&  & - \frac{\lambda (\kappa \mu + 3 \lambda + \lambda \nu) x^2 + (\kappa
	\nu^2 + \lambda \mu (n + 6 + 3 \nu)) x + \nu^2 (n + 3 + 2
	\nu)}{\sqrt{\lambda^2 x^2 + 2 \lambda \mu x + \nu^2}} .
	\end{eqnarray*}
	Set
	\begin{eqnarray*}
		h (x) & := & \left[ \nu (n + 3 + 2 \nu) + \left( \lambda \mu + 3
		\kappa + \kappa \nu - \tfrac{3}{n} \right) x \right]^2 (\lambda^2 x^2 + 2
		\lambda \mu x + \nu^2)\\
		&  & - [\lambda (\kappa \mu + 3 \lambda + \lambda \nu) x^2 + (\kappa
		\nu^2 + \lambda \mu (n + 6 + 3 \nu)) x + \nu^2 (n + 3 + 2 \nu)]^2 .
	\end{eqnarray*}
	Now we need to prove $h (x) \leqslant 0$. Since $\kappa = \lambda +
	\frac{1}{n - 1}, \mu = \nu + \frac{3}{n}$, we have
	\[ \lambda \mu + 3 \kappa + \kappa \nu - \tfrac{3}{n} = \kappa \mu + 3
	\lambda + \lambda \nu . \]
	Putting $A = \kappa \mu + 3 \lambda + \lambda \nu, B = n + 3 + 2 \nu, C =
	\kappa \nu^2 + \lambda \mu (n + 6 + 3 \nu)$, we get
	\begin{eqnarray*}
		h (x) & = & (\nu B + A x)^2 (\lambda^2 x^2 + 2 \lambda \mu x + \nu^2) -
		(\lambda A x^2 + C x + \nu^2 B)^2\\
		& = & 2 \lambda A (\mu A + \lambda \nu B - C) x^3\\
		&  & + [\nu^2 (A - \lambda B)^2 + 4 \lambda \mu \nu A B - C^2] x^2\\
		&  & + 2 \nu^2 B (\nu A + \lambda \mu B - C) x.
	\end{eqnarray*}
	By the definitions of $\kappa, \lambda, \mu, \nu$, we have
	\[ \mu A + \lambda \nu B = \nu A + \lambda \mu B = C, \]
	and
	\[ \nu^2 (A - \lambda B)^2 + 4 \lambda \mu \nu A B - C^2 = - \frac{81 (n^3 -
		12 n + 9)^2}{n^2 (n - 1)^2 (n^2 - 3 n - 3)^4} < 0. \]
	Thus, inequality (iii) is proved.
	
	(iv) It follows from inequalities (i) and (iii) that $\psi (x) - x \psi' (x)
	\leqslant 0$. By direct computations, we get
	\begin{eqnarray*}
		x \psi' (x) - \psi (x) & = & - \nu + \frac{\lambda \mu x +
	\nu^2}{\sqrt{\lambda^2 x^2 + 2 \lambda \mu x + \nu^2}}\\
		& \leqslant & - \nu + \frac{\lambda \mu x}{\sqrt{\lambda^2 x^2}} +
		\frac{\nu^2}{\sqrt{\nu^2}}\\
		& = & \mu < 2.
	\end{eqnarray*}
	
\end{proof}

\end{document}